\numberwithin{equation}{section}
                        \theoremstyle{plain}
\newcommand\no[1]{}
\newtheorem{theorem}{Theorem}[section]
\newtheorem{thm}{Theorem}
\newtheorem{lemma}[theorem]{Lemma}
\newtheorem{proposition}[theorem]{Proposition}
\theoremstyle{definition}
\newtheorem{remark}[theorem]{Remark}
\def\BC{\mathbb C}
\def\BZ{\mathbb Z}
\def\BR{\mathbb R}
\def\BQ{\mathbb Q}
\def\CK{\mathcal K}
\def\la{\langle}
\def\ra{\rangle}
\DeclareMathOperator{\tr}{\mathrm tr}
\def\ve{\varepsilon}
\def\be { \begin{equation} }
\def\ee { \end{equation} }
\begin{document}

\title[$\mathrm{SL}_2(\BR)$-representations and left-orderable surgeries]{$\mathrm{SL}_2(\BR)$-representations and left-orderable surgeries of $(-2, 3, 2n+1)$-pretzel knots}

\author{Anh T. Tran}

\thanks{2020 \textit{Mathematics Subject Classification}.\/ Primary 57K18, 57K31; Secondary 57M05.}
\thanks{{\it Key words and phrases.\/}
Alexander polynomial, Dehn surgery, L-space, left-orderable group, pretzel knot, representation.}

\begin{abstract}
In this paper, we provide an explicit construction of continuous paths of $\mathrm{SL}_2(\BR)$-representations of the knot groups of $(-2,3,2n+1)$-pretzel knots. As an application,
we show that the fundamental group of the $3$-manifold obtained from the $3$-sphere by $\frac{m}{l}$-surgery along the $(-2,3,2n+1)$-pretzel knot, where $n \ge 3$ is an integer and $n \not= 4$, is left-orderable if $\frac{m}{l}< 2 \lfloor \frac{2n+4}{3} \rfloor$. 
\end{abstract}

\address{Department of Mathematical Sciences, The University of Texas at Dallas, 
Richardson, TX 75080, USA}
\email{att140830@utdallas.edu}

\maketitle

\section{Introduction}

For a rational homology $3$-sphere $Y$, the rank of the hat version of its Heegaard Floer homology is bounded below by the order of its first integral homology group \cite{OS}. It is called an L-space if equality holds, namely $\mathrm{rk}(\widehat{\mathrm{HF}}(Y)) = |H_1(Y; \BZ)|$. Examples of L-spaces include lens spaces and all manifolds with finite fundamental groups. It is natural to ask if there are non-Heegaard Floer characterizations of L-spaces. The L-space conjecture of Boyer, Gordon and Watson \cite{BGW} is an attempt to answer this question. It states that an irreducible rational homology 3-sphere is an L-space if and only if its fundamental group is not left-orderable. Here, a non-trivial group $G$ is called left-orderable if it admits a total ordering $<$ such that $g<h$ implies $fg < fh$ for all elements $f,g,h$ in $G$. 

We now focus on $3$-manifolds obtained from $S^3$ by Dehn surgeries along a knot. For double twist knots and classical odd pretzel knots, intervals of slopes for which the 3-manifold obtained from $S^3$ by Dehn surgery along the knot has left-orderable fundamental group was determined in \cite{BGW, HT-52, HT-genus1, Tr, Tr-I, Ga, KTT, KT, KL, Wa}, by using continuous paths of $\mathrm{SL}_2(\BR)$-representations of the knot group and the fact that the universal covering group of $\mathrm{SL}_2(\BR)$ is a left-orderable group. 

An algebraic approach to the understanding of left-orderability of the fundamental groups of $3$-manifolds obtained by Dehn surgeries along a knot in $S^3$ was proposed by Culler and Dunfield \cite{CD}, by using elliptic $\mathrm{SL}_2(\BR)$-representations. See also \cite{HZ}. Similar approaches using hyperbolic $\mathrm{SL}_2(\BR)$-representations and detected Seifert surfaces were also proposed by Gao \cite{Ga-holonomy} and Wang \cite{Wa} respectively. See  \cite{Zu, Hu-taut} and references therein for a geometric approach using taut foliations. 

By applying the $\mathrm{SL}_2(\BR)$-representation technique, Nie \cite{Ni} proved that the $3$-manifold obtained by Dehn surgery along the $(-2,3,2n+1)$-pretzel knot, where $n \ge 3$ is an integer, has left-orderable fundamental group if the surgery slope is in a small neighborhood of $0$.  Moreover, Varvarezos \cite{Va} proved that the $3$-manifold obtained by $\frac{m}{l}$-surgery along the $(-2,3,7)$-pretzel knot has left-orderable fundamental group if $\frac{m}{l}< 6$. 

In this paper, we provide an explicit construction of continuous paths of $\mathrm{SL}_2(\BR)$-representations and a detailed longitude calculation of the knot groups of $(-2,3,2n+1)$-pretzel knots, by using the  descriptions of their $\mathrm{SL}_2(\BC)$-character varieties obtained in previous works \cite{LT, Tr-ring}. The precise construction and calculation are described in details in Section \ref{path} and Section \ref{long} respectively. As an application, we study left-orderable surgeries of $(-2,3,2n+1)$-pretzel knots and show the following.

\begin{thm} \label{main}
Let $n \ge 3$ is an integer and $n \not= 4$. Then the $3$-manifold obtained from $S^3$ by $\frac{m}{l}$-surgery along the $(-2,3,2n+1)$-pretzel knot has left-orderable fundamental group if $\frac{m}{l}< 2 \lfloor \frac{2n+4}{3} \rfloor$. 
\end{thm}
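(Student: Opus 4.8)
The strategy I would follow is the continuous-path-of-$\mathrm{SL}_2(\mathbb{R})$-representations method used in \cite{HT-52, HT-genus1, Tr, Tr-I}, which I expect to be effective here. Write $K = K_n$ for the $(-2,3,2n+1)$-pretzel knot and $\pi = \pi_1(S^3\setminus K)$, with meridian $\mu$ and preferred longitude $\lambda$. The first step is to record a two-generator presentation of $\pi$, say $\pi = \langle a, b \mid wa = bw\rangle$ with $a,b$ meridians, and to write $\lambda$ explicitly as a word in $a,b$; this is standard for $(-2,3,2n+1)$-pretzel knots and the needed formulas can be taken from the literature on their character varieties. Then I would parametrize the relevant component of the $\mathrm{SL}_2(\mathbb{C})$-character variety by two coordinates — the trace $x = \operatorname{tr}\rho(\mu)$ of the meridian and the trace $y = \operatorname{tr}\rho(ab)$ (equivalently some root of the Riley/trace polynomial) — and identify the subset on which $\rho$ is conjugate into $\mathrm{SL}_2(\mathbb{R})$ and is irreducible. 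Along the piece with $x = \operatorname{tr}\rho(\mu) \in (-2,2)$ one gets \emph{elliptic} meridian, which is exactly the situation where the Culler--Dunfield lifting obstruction vanishes and where one can lift to $\widetilde{\mathrm{SL}_2(\mathbb{R})}$, the universal cover, whose left-orderability is what ultimately produces the left-order on the surgered group.

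The second step is to run the arc of representations. Starting from the $x = 2$ end (reducible/abelian representations sending $\mu \mapsto$ parabolic or $\pm I$), decrease $x$ through $(-2,2)$; along the way $\rho$ stays irreducible and real, the meridian becomes elliptic of rotation angle $\theta(x)$ depending continuously and monotonically on $x$, and the longitude $\rho(\lambda)$ — which commutes with $\rho(\mu)$ — is also elliptic (or $\pm I$) with a well-defined continuous rotation number. The key quantitative input is to compute how the rotation number of $\rho(\lambda)$, measured in the lift $\widetilde{\mathrm{SL}_2(\mathbb{R})}$, varies along this arc: one shows it sweeps out an interval, and its ratio against the rotation of $\rho(\mu)$ realizes a range of surgery slopes. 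Concretely, for $\frac{m}{l}$-surgery the surgered group is $\pi/\langle\!\langle \mu^m\lambda^l\rangle\!\rangle$, and it is left-orderable provided we can find a real representation on the arc with $\rho(\mu^m\lambda^l)$ lifting to the identity in $\widetilde{\mathrm{SL}_2(\mathbb{R})}$ — i.e., $m\,\widetilde{\operatorname{rot}}(\mu) + l\,\widetilde{\operatorname{rot}}(\lambda) = 0$ — which is solvable exactly when $\frac{m}{l}$ lies in the open interval cut out by the rotation data. The bound $2\lfloor\frac{2n+4}{3}\rfloor$ should emerge as (twice) the supremum of $-\widetilde{\operatorname{rot}}(\lambda)/\widetilde{\operatorname{rot}}(\mu)$ over the arc, where the floor function is an artifact of the jump in how many times $\rho(\lambda)$ winds as $x$ ranges over $(-2,2)$; the exclusion $n \neq 4$ presumably corresponds to a degeneration of the relevant component (this is the $(-2,3,9)$ case, which is a connected-sum-like or otherwise exceptional pretzel).

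The main obstacle, as usual in this circle of ideas, is the \emph{longitude computation}: one must pin down $\operatorname{tr}\rho(\lambda)$ and, more delicately, the actual rotation number (not just the trace, which only sees the angle mod sign) of $\rho(\lambda)$ as a continuous function along the arc, and verify it is monotone with the claimed endpoint behaviour. This requires an explicit enough handle on the longitude word and a careful sign/continuity analysis at the endpoints $x \to \pm 2$ (where things can degenerate to parabolics or to $\pm I$) and at any point where $\rho(\lambda) = \pm I$. A secondary technical point is checking irreducibility all along the arc (so that the representations genuinely descend to the surgered quotient and the character-variety parametrization stays valid) and confirming the arc does not run into a reducible or non-real wall before the slope range is exhausted. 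Once the rotation number function and its monotonicity are established, producing the left-order is then a direct application of the standard lemma that a nontrivial group admitting a homomorphism to $\widetilde{\mathrm{SL}_2(\mathbb{R})}$ with left-orderable (in fact, nonabelian) image — or more precisely, for which the surgery relator dies in the universal cover — has left-orderable fundamental group, together with the fact that $\pi_1$ of the surgered manifold surjects appropriately. I would also double-check the edge cases: that $\frac{m}{l} < 2\lfloor\frac{2n+4}{3}\rfloor$ includes $\frac{m}{l} = 0$ recovering Nie's neighborhood-of-zero result \cite{Ni}, and for $n = 3$ gives $\frac{m}{l} < 6$ matching Varvarezos \cite{Va}.
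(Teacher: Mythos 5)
Your plan correctly identifies the paper's overall strategy: a continuous path of irreducible $\mathrm{SL}_2(\BR)$-representations with elliptic boundary holonomy, a rotation-number computation for the longitude, lifting to $\widetilde{\mathrm{SL}_2(\BR)}$ via Culler--Dunfield, and the Boyer--Rolfsen--Wiest criterion (plus a separate treatment of slope $0$ via first Betti number). However, as a proof it has genuine gaps: essentially all of the content of the theorem lives in the steps you flag as ``obstacles'' and then leave unresolved. Concretely: (i) the path is not anchored at the $x=\pm 2$ abelian end as you suggest, but at a reducible representation corresponding to a root $e^{i\theta_n}$ of the Alexander polynomial lying in a carefully isolated sub-interval of the unit circle near argument $2\pi/3$ (Proposition \ref{root}); locating and isolating this root is what fixes the integer $2\lfloor\frac{2n+4}{3}\rfloor$, since one must pin down the multiple of $2\pi$ in $\varphi_{\pm}(\theta_n)=0$ and in the limit $\varphi_{\pm}\to \mp 2\lfloor\frac{2n+4}{3}\rfloor\pi$ at the other endpoint. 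Your heuristic that the floor is ``an artifact of the winding'' is in the right spirit but does not substitute for these interval estimates. (ii) The longitude analysis requires the explicit formula $L=M^{-(4n+9)}\frac{CM-D}{DM-C}$ and, crucially, the positivity $D^2-C^2>0$ along the entire path (Proposition \ref{4}), which guarantees $\mathrm{Im}\,\frac{CM-D}{DM-C}>0$ and hence a single continuous branch of $\arccos$ for $\varphi_{\pm}$; this in turn rests on the sign analysis of $G$ in Lemma \ref{lem}. Without this, the continuity and endpoint behaviour of the rotation number cannot be established.

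Your one concrete structural claim --- that $n\neq 4$ is excluded because the $(-2,3,9)$-pretzel is ``a connected-sum-like or otherwise exceptional pretzel'' with a degenerate component --- is incorrect. The knot is not exceptional; the exclusion is purely an artifact of the method: for $n=4$ the quantity $D^2-C^2$ changes sign on $[\theta_n,\beta_n)$ (Remark \ref{Rem}), so the argument controlling the branch of $\varphi_{\pm}$ breaks down, and the paper explicitly expects the theorem to remain true for $n=4$. Finally, your sanity checks are consistent ($n=3$ gives slope bound $6$, matching Varvarezos), but the irreducibility and realness of the representations along the arc --- which you correctly list as a ``secondary technical point'' --- also require the nonvanishing statements of Propositions \ref{s} and \ref{root} and are not automatic.
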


It is known that $(-2,3,2n+1)$-pretzel knots, with $n \ge 3$, are L-space knots, see e.g. \cite{BM}. Here, a knot $K$ in $S^3$ is called an L-space knot if it admits a positive Dehn surgery yielding an $L$-space. For an L-space knot $\CK$, it is known that the $3$-manifold obtained by $\frac{m}{l}$-surgery along $\CK$ is an L-space if and only if $\frac{m}{l}\ge 2g(\CK)-1$, where $g(\CK)$ is the Seifert genus of $\CK$, see \cite{OS}. From the view point of the L-space conjecture, we would expect that the $3$-manifold obtained by $\frac{m}{l}$-surgery along $\CK$ has left-orderable fundamental group if and only if $\frac{m}{l} < 2g(\CK)-1$. For the $(-2,3,2n+1)$-pretzel knot, with $n \ge 3$, the genus is $n+2$ and therefore the L-space conjecture predicts that any slope in the interval $(-\infty, 2n+3)$ yields a $3$-manifold with left-orderable fundamental group by Dehn surgery. Theorem \ref{main} confirms this prediction for the interval $(-\infty, 2 \lfloor \frac{2n+4}{3} \rfloor)$. The  interval $[2 \lfloor \frac{2n+4}{3} \rfloor, 2n+3)$ is unsolved via the $\mathrm{SL}_2(\BR)$-representation technique; however, it has been recently solved by Zhao \cite{Zh} via the taut foliation technique. 

% We note that due to a technical reason (see Proposition \ref{4} and Remark \ref{Rem}), the case of the $(-2,3,9)$-pretzel knot remains unknown. However, we expect that Theorem \ref{main} also holds true for $n=4$. 

\no{
\begin{figure}[h]
	\centering
    \begin{overpic}[width=0.45 \textwidth,tics=9]{knotBlankDirected0.png}
        \put (12, 40) {\small{\text{$a_1$ crossings}}}
        \put (52, 40) {\small{\text{$a_2$ crossings}}}
        \put (92, 40) {\small{\text{$a_3$ crossings}}}
        \put (68, 12) {{$x_1$}}
        \put (48, 2) {{$x_2$}}
        \put (28, 12) {{$x_3$}}
    \end{overpic}
    	\caption{The pretzel knot $P(a_1, a_2, a_3)$.}
	\label{fig:pretzel}
\end{figure}
}

The paper is organized as follows. In Section \ref{rep} we review $\mathrm{SL}_2(\BC)$-representations of $(-2, 3, 2n+1)$-pretzel knots $\CK_n$ and describe one-variable parametrizations of these representations. In Section \ref{Alex}, we find certain roots of the Alexander polynomial of $\CK_n$ on the unit circle, which will then be used in Section \ref{path} to construct continuous paths of elliptic $\mathrm{SL}_2(\BR)$-representations of $\CK_n$. In Section \ref{long}, we study the image of a canonical longitude under these paths. Finally, we give a proof of Theorem \ref{main} in Section \ref{LO}.

\section{$\mathrm{SL}_2(\BC)$-representations} \label{rep}

In this section we first recall Chebyshev polynomials and their properties. Then we review $\mathrm{SL}_2(\BC)$-representations of $(-2, 3, 2n+1)$-pretzel knots from \cite{LT}. Finally, we follow \cite{Tr-ring} to find one-variable parametrizations of these representations. 

\subsection{Chebychev polynomials}

Let $\{S_k(y)\}_{k \in \BZ}$ be Chebychev polynomials of the second kind defined by $S_0(y) = 1$, $S_1(y) = y$ and $S_k(y) = y S_{k-1}(y) - S_{k-2}(y)$ for all integers $k$. By induction we have $S_k(\pm2) = (\pm 1)^k (k+1)$ and $S_k(y) = (t^{k+1} - t^{-k-1}) / (t - t^{-1})$ for $y = t+ t^{-1} \not= \pm 2$. Using this, we can prove the following properties.
% see e.g. \cite{KTT}.

\begin{lemma} \label{chev}
For $k \in \BZ$ we have
$$S^2_k(y) + S^2_{k-1}(y) - y S_{k}(y) S_{k-1}(y) = 1.$$
\end{lemma}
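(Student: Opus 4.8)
The plan is to prove the identity by induction on $k$, using the defining recurrence to show that the left-hand side does not depend on $k$. Set
\[
f(k) := S_k^2(y) + S_{k-1}^2(y) - y\, S_k(y)\, S_{k-1}(y),
\]
so the goal is $f(k) = 1$ for all $k \in \BZ$. First I would record the base case: since $S_0(y) = 1$ and $S_1(y) = y$, we get $f(1) = y^2 + 1 - y\cdot y = 1$. (Equivalently, reading $S_1 = yS_0 - S_{-1}$ gives $S_{-1} = 0$, so $f(0) = 1 + 0 - 0 = 1$ as well.)

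Next comes the inductive step, which I would set up so that it propagates in both directions at once. Rewrite $f(k) = S_k\bigl(S_k - yS_{k-1}\bigr) + S_{k-1}^2$, and use $S_k - yS_{k-1} = -S_{k-2}$ together with $S_k = yS_{k-1} - S_{k-2}$ to compute
\[
f(k) = -S_k S_{k-2} + S_{k-1}^2 = -(yS_{k-1} - S_{k-2})S_{k-2} + S_{k-1}^2 = S_{k-1}^2 + S_{k-2}^2 - yS_{k-1}S_{k-2} = f(k-1).
\]
Hence $f$ is constant on $\BZ$, and by the base case $f \equiv 1$, which is exactly the assertion of the lemma.

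There is essentially no obstacle here; the only point requiring a moment's care is that the claim is for all integers $k$, not merely $k \ge 1$. This is automatic because the recurrence $S_k = yS_{k-1} - S_{k-2}$ holds for every $k \in \BZ$ by definition, so the displayed identity $f(k) = f(k-1)$ is valid for all $k$ and a single step carries the equality both upward and downward from $k = 1$. As an alternative to induction, one can substitute the closed form $S_k(y) = (t^{k+1} - t^{-k-1})/(t - t^{-1})$ with $y = t + t^{-1}$: a direct expansion collapses the left-hand side to $(t - t^{-1})^2/(t - t^{-1})^2 = 1$ for all $y \ne \pm 2$, and since both sides are polynomials in $y$, equality persists at $y = \pm 2$ by continuity.
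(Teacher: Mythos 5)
Your proof is correct. The paper itself gives no written proof; the sentence preceding the lemma indicates the intended argument is a direct substitution of the closed form $S_k(y) = (t^{k+1}-t^{-k-1})/(t-t^{-1})$ with $y = t+t^{-1}$, which is exactly the alternative you sketch at the end (including the continuity remark to recover $y=\pm 2$). Your primary route --- showing $f(k)=f(k-1)$ via the recurrence so that $f$ is constant on $\BZ$ --- is a mildly different and arguably cleaner argument: it avoids the case split at $y=\pm2$ entirely and makes the validity for all integers $k$ (not just $k\ge 1$) transparent, since the recurrence is defined for every $k\in\BZ$. The computation $f(k) = -S_kS_{k-2}+S_{k-1}^2 = f(k-1)$ and the base case $f(1)=1$ both check out, so either version stands on its own.
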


\begin{lemma} \label{chev-sign}
For $k \ge 1$ we have

$(1)$ $S_{k}(y) = \prod_{j=1}^{k} (y-2\cos \frac{j\pi}{k+1})$. Moreover $(-1)^{j}S_{k}(y) > 0$ if $y \in (2\cos \frac{(j+1)\pi}{k+1}, 2\cos \frac{j\pi}{k+1})$.

$(2)$ $S_{k}(y) - S_{k-1}(y) = \prod_{j=1}^{k} (y-2\cos \frac{(2j-1)\pi}{2k+1})$. Moreover $(-1)^{j} (S_{k}(y) - S_{k-1}(y) ) > 0$ if $y \in (2\cos \frac{(2j+1)\pi}{2k+1}, 2\cos \frac{(2j-1)\pi}{2k+1})$.
\end{lemma}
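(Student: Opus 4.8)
The plan is to deduce everything from the closed form for $S_k$ already recorded above: for $y = t + t^{-1} \ne \pm 2$ one has $S_k(y) = (t^{k+1} - t^{-k-1})/(t - t^{-1})$, so writing $y = 2\cos\theta$ with $\theta \in (0,\pi)$ gives $S_k(2\cos\theta) = \sin((k+1)\theta)/\sin\theta$. Both claimed product formulas will follow once we observe that $S_k(y)$, and likewise $S_k(y) - S_{k-1}(y)$, is a monic polynomial in $y$ of degree $k$ (immediate induction from the recursion $S_k = yS_{k-1} - S_{k-2}$, together with $\deg S_{k-1} < k$), so it is determined by exhibiting $k$ distinct roots.

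For part $(1)$, I would note that $\sin((k+1)\theta)$ vanishes on $(0,\pi)$ exactly at $\theta = j\pi/(k+1)$, $j = 1,\dots,k$, producing the $k$ distinct numbers $2\cos\frac{j\pi}{k+1} \in (-2,2)$; by monicity, $S_k(y) = \prod_{j=1}^k(y - 2\cos\frac{j\pi}{k+1})$. For the sign assertion, observe that these roots are strictly decreasing in $j$ since cosine decreases on $[0,\pi]$. Hence if $y$ lies in the gap $(2\cos\frac{(j+1)\pi}{k+1}, 2\cos\frac{j\pi}{k+1})$, then exactly the factors $y - 2\cos\frac{i\pi}{k+1}$ with $i = 1,\dots,j$ are negative while the remaining $k-j$ are positive, so the product has sign $(-1)^j$; the extreme cases $j = 0$ and $j = k$ (reading $2\cos 0 = 2$, $2\cos\pi = -2$) are covered by the same count.

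For part $(2)$, a short manipulation of the closed forms gives $S_k(y) - S_{k-1}(y) = (t-1)(t^k + t^{-k-1})/(t - t^{-1}) = (t^{k+1} + t^{-k})/(t+1)$, and dividing numerator and denominator by $t^{1/2}$ and setting $t = e^{i\theta}$ turns this into $\cos\!\big(\tfrac{(2k+1)\theta}{2}\big)/\cos\!\big(\tfrac{\theta}{2}\big)$. This vanishes on $(0,\pi)$ exactly at $\theta = (2j-1)\pi/(2k+1)$, $j = 1,\dots,k$, giving the $k$ distinct roots $2\cos\frac{(2j-1)\pi}{2k+1}$, so by monicity $S_k(y) - S_{k-1}(y) = \prod_{j=1}^k(y - 2\cos\frac{(2j-1)\pi}{2k+1})$; the sign statement then follows from exactly the same bookkeeping of negative factors on each gap as in part $(1)$, using that these roots are again strictly decreasing in $j$.

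The only point needing a little care — not really an obstacle — is checking that no roots are lost at the degenerate values $y = \pm 2$ where the substitution $y = t + t^{-1}$ breaks down: for $S_k$ this is immediate from $S_k(\pm 2) = (\pm 1)^k(k+1) \ne 0$, and $S_k(\pm 2) - S_{k-1}(\pm 2)$ is likewise nonzero by direct computation, so in both cases the $k$ roots found in $(-2,2)$ are all of them and the monic product formulas are valid. Everything else is routine trigonometry.
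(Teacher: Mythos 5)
Your proof is correct, and it fills in exactly the argument the paper intends: the paper omits the proof of this lemma, remarking only that it follows from $S_k(\pm 2)=(\pm1)^k(k+1)$ and the closed form $S_k(y)=(t^{k+1}-t^{-k-1})/(t-t^{-1})$, which is precisely the route you take (root-finding via $\sin((k+1)\theta)/\sin\theta$ and $\cos\bigl(\tfrac{(2k+1)\theta}{2}\bigr)/\cos\bigl(\tfrac{\theta}{2}\bigr)$, monicity, and counting negative factors).
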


For any matrix $A \in \mathrm{SL}_2(\BC)$, by the Cayley-Hamilton theorem, we have $A^2 - (\tr A) A + I = O$, where $I$ and $O$ denote the $2 \times 2$ identity matrix and zero matrix respectively. Then, by induction we obtain the following. 

\begin{lemma} \label{power}
For $A \in \mathrm{SL}_2(\BC)$ and $k \in \BZ$ we have 
$$
A^k = S_{k-1}(\tr A) A - S_{k-2}(\tr A) I.
$$
\end{lemma}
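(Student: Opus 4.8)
The plan is a straightforward induction driven by the Cayley--Hamilton relation. Since $\det A = 1$, Cayley--Hamilton gives $A^2 - (\tr A) A + I = O$, so that, writing $y = \tr A$,
$$A^2 = y A - I, \qquad A^{-1} = y I - A,$$
the second relation obtained by multiplying the first on the left by $A^{-1}$. Each of these already has the exact form the lemma predicts, namely a $\BC$-linear combination of $A$ and $I$ whose coefficients are (signed) Chebyshev values; the induction will then propagate this shape to every power $A^k$, $k \in \BZ$.

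First I would dispatch the base cases $k = 1$ and $k = 0$. The case $k = 1$ asserts $A = S_0(y) A - S_{-1}(y) I$, while $k = 0$ asserts $I = S_{-1}(y) A - S_{-2}(y) I$; both hold once one evaluates $S_0(y) = 1$, $S_{-1}(y) = 0$ and $S_{-2}(y) = -1$ from the defining recurrence read backward as $S_{k-2}(y) = y S_{k-1}(y) - S_k(y)$. For the inductive step upward, assuming the formula for $A^{k-1}$, I multiply on the left by $A$ and substitute $A^2 = yA - I$, obtaining
$$A^k = S_{k-2}(y) A^2 - S_{k-3}(y) A = \bigl(y S_{k-2}(y) - S_{k-3}(y)\bigr) A - S_{k-2}(y) I.$$
The defining recurrence collapses the bracketed coefficient to $S_{k-1}(y)$, which is exactly the claimed expression; this settles all $k \ge 1$.

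To reach negative indices I would run the identical computation downward: multiplying the inductively known formula for $A^{k+1}$ on the left by $A^{-1} = yI - A$ and again invoking the recurrence yields the formula for $A^k$, so the statement holds for every $k \in \BZ$. There is no serious obstacle in this argument; the only point demanding care is the bookkeeping with the negatively indexed Chebyshev polynomials $S_{-1}(y)$ and $S_{-2}(y)$ in the base cases, where the recurrence must be used in its backward form. As an independent cross-check I would note the eigenvalue route: for $y \ne \pm 2$ one diagonalizes $A$ with eigenvalues $t^{\pm 1}$, $y = t + t^{-1}$, and verifies $t^k = S_{k-1}(y)\, t - S_{k-2}(y)$ directly from the closed form $S_m(y) = (t^{m+1} - t^{-m-1})/(t - t^{-1})$ recalled before the lemma, extending to $y = \pm 2$ by continuity. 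This is unnecessary for the proof but confirms the formula from a second viewpoint.
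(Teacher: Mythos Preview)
Your proof is correct and follows exactly the approach the paper indicates: the paper simply states the Cayley--Hamilton relation $A^2 - (\tr A)A + I = O$ and says ``by induction we obtain the following,'' and your argument supplies precisely that induction (upward via $A^2 = yA - I$, downward via $A^{-1} = yI - A$), together with the necessary bookkeeping for $S_{-1}(y)=0$ and $S_{-2}(y)=-1$.
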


\subsection{$\bm{\mathrm{SL}_2(\BC)}$-representations} For a knot $K$ in $S^3$, let $G(K)$ denote the knot group of $K$ which is the fundamental group of the complement of an open tubular neighborhood of $K$. 
For the $(-2,3,2n+1)$-pretzel knot $\CK_n$ we have
$$G(\CK_n)=\la a,b,c \mid cacb=acba,~ba(cb)^n=a(cb)^nc\ra,$$
where $a,b,c$ are meridians depicted in Figure \ref{figp}.

\begin{figure}[th]
\centerline{\psfig{file=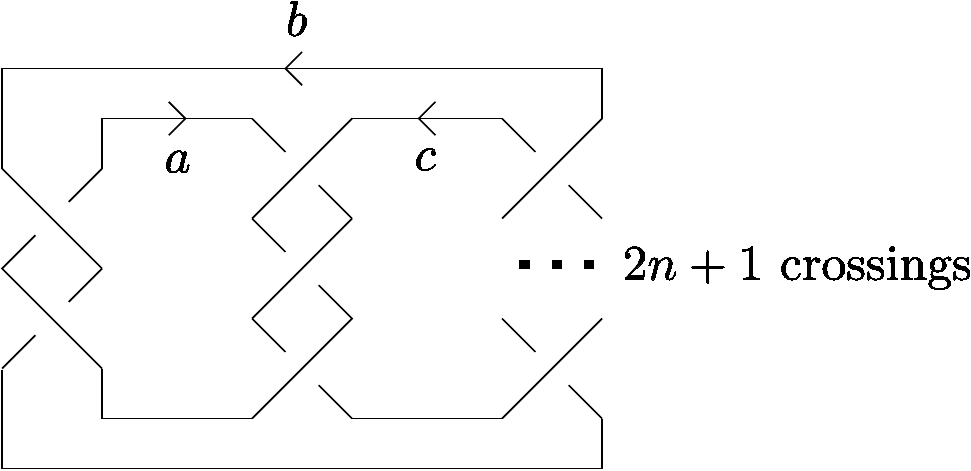,width=3.5in}}
\caption{The $(-2,3,2n+1)$-pretzel knot.}
\label{figp}
\end{figure}

According to \cite{LT}, we can find a presentation of $G(\CK_n)$ with two generators and one relator as follows. Let $w := cb$. Then the relation $cacb=acba$ becomes $caw=awa.$ This implies that $c=awaw^{-1}a^{-1}$ and so $b=c^{-1}w=awa^{-1}w^{-1}a^{-1}w$. The relation $ba(cb)^n=a(cb)^nc$ can be written as $awa^{-1}w^{-1}a^{-1}waw^n=aw^nawaw^{-1}a^{-1}$, i.e.
$$w^nawa^{-1}w^{-1}a^{-1}=a^{-1}w^{-1}awaw^{-1}w^n.$$
Hence we obtain a desired presentation 
$$G(\CK_n) = \la a,w \mid w^n u = v w^n\ra$$
where $u :=awa^{-1}w^{-1}a^{-1}$ and $v:=a^{-1}w^{-1}awaw^{-1}.$

The $\mathrm{SL}_2(\BC)$-character variety of the free group $\la a,w \ra$ is isomorphic to $\BC^3$ by the Fricke-Klein-Vogt theorem, see e.g. \cite{LM}. %For every element $g \in \la a,w \ra$ there is a unique polynomial $P_g$ in 3 variables such that $\tr (\rho(g))=P_g (x,y,z)$, where
For any representation $\rho: \la a,w \ra  \to \mathrm{SL}_2(\BC)$ we let $x := \tr \rho(a)$, $y := \tr \rho(w)$ and $z := \tr \rho(aw)$. 
By \cite[Theorem 4.1]{LT}, the representation $\rho$  extends to a representation $\rho: G(\CK_n) \to \mathrm{SL}_2(\BC)$ if and only if both $ \tr \rho(u) = \tr \rho(v)$ and $\tr \rho(w^n u a) =\tr\rho(vw^n a)$ hold true. 

With
$Q(x,y,z) := \tr \rho(u) - \tr \rho(v)$ and $R_n(x,y,z) := \rho(w^n u a) - \tr\rho(vw^n a)$, by applying Lemma \ref{power} we obtain the following explicit formulas
\begin{eqnarray*}
Q &=& x - x y + (x^2+y^2+z^2-xyz-3) z,\\
R_n&=& (y+2-xz-x^2-z^2+xyz)S_{n-2}(y) - (y^2+y-2+z^2-xyz)S_{n-3}(y).
\end{eqnarray*}

\subsection{Solving the equations} We now solve $Q=R_n=0$. 
As in \cite[Section 4]{Tr-ring}, we first consider $Q$ and $R_n$ as quadratic polynomials in $x$ with coefficients in $\BC[y,z]$:
\begin{eqnarray*}
Q &=& x^2 z - x (yz^2+y-1)+ (y^2+z^2-3) z, \\
R_n &=& - x^2 S_{n-2}(y) + x z (  (y -1) S_{n-2}(y)  + y S_{n-3}(y)) \\
&& + \, (y+2-z^2)S_{n-2}(y) - (y^2+y-2 + z^2)S_{n-3}(y).
\end{eqnarray*}
This implies that $S_{n-2}(y) Q + z R_n = -p x + q z$, where
\begin{eqnarray*}
p &:=& (y-1) S_{n-2}(y) + z^2 (S_{n-2}(y)  -y S_{n-3}(y)), \\
q &:=& (y^2+y-1) S_{n-2}(y) - (y^2+y-2) S_{n-3}(y)- z^2 S_{n-3}(y).
\end{eqnarray*}

Assume $pz  \not= 0$. Then $Q=R_n=0$ is equivalent to $Q = - p x + q z = 0$, which is also equivalent to $x= z q/p$ and $S: = z^2 q^2 - (yz^2+y-1)pq + (y^2+z^2-3)p^2 =0$. 

By a direct calculation we have
\begin{eqnarray*}
S &=& (z^2 - y^3+3y-2)[ (S^2_{n-2}(y) + S^2_{n-3}(y) - y S_{n-2}(y) S_{n-3}(y)) (z^2-1)^2   \\
&& \qquad \qquad \qquad \qquad - \, S_{n-2}(y) (S_{n-2}(y) -S_{n-3}(y)) (z^2-1)- (S_{n-2}(y) -S_{n-3}(y))^2].
\end{eqnarray*}
Since $S^2_{n-2}(y) + S^2_{n-3}(y) - y S_{n-2}(y) S_{n-3}(y)=1$, we obtain $S = (z^2-y^3+3y-2) T$ where
$$
T : = (z^2-1)^2  - S_{n-2}(y) (S_{n-2}(y) -S_{n-3}(y)) (z^2 - 1)  -  (S_{n-2}(y) -S_{n-3}(y))^2. 
$$
Hence $Q=R_n=0$ if $pz \not= 0$, $x = zq/p$ and $(z^2-y^3+3y-2) T=0$.

Assume $pz \not= 0$, $x = zq/p$ and $T=0$.  By considering $T$ as a quadratic polynomial in $z^2-1$ with coefficients in $\BC[y]$ we have
$$ 
z^2 -1 = (S_{n-2}(y)  - S_{n-3}(y))  \frac{S_{n-2}(y) \pm \sqrt{4 + S^2_{n-2}(y)}}{2}. 
$$
Let $r := \frac{S_{n-2}(y) \pm \sqrt{4 + S^2_{n-2}(y)}}{2}$. Then $r^2 - S_{n-2}(y) r - 1 = 0$. Since $z^2 - 1 = (S_{n-2}(y) - S_{n-3}(y)) r$ we have   
\begin{eqnarray*}
p &=& (S_{n-2}(y) - S_{n-3}(y))[ y  + r ( S_{n-2}(y) - y S_{n-3}(y))], \\
q &=& (S_{n-2}(y) - S_{n-3}(y)) [y^2+y-1 - r S_{n-3}(y)].
\end{eqnarray*}
Hence 
$$
x = z \frac{q}{p} = z \frac{y^2+y-1 - r S_{n-3}(y)  }{  y + r ( S_{n-2}(y) - y S_{n-3}(y))}.
$$

We want to write $\frac{y^2+y-1 - r S_{n-3}(y)  }{  y + r ( S_{n-2}(y) - y S_{n-3}(y))} = g + h r$ for some rational functions $g, h$ in $y$. Since $r^2 = S_{n-2}(y) r + 1$, this is equivalent to
\begin{eqnarray*}
0 
&=&  g y - y^2 - y +1 + r [g ( S_{n-2}(y) - y S_{n-3}(y)) + h y+ S_{n-3}(y)]  \\
&& + \,  h ( S_{n-2}(y) - y S_{n-3}(y)) ( S_{n-2}(y) r + 1),
\end{eqnarray*}
which holds true if  $g,h$ are solutions of the following system
\begin{eqnarray*}
 g( S_{n-2}(y) - y S_{n-3}(y)) + h [y +( S_{n-2}(y) - y S_{n-3}(y)) S_{n-2}(y) ] &=& - S_{n-3}(y), \\
g y + h ( S_{n-2}(y) - y S_{n-3}(y)) &=& y^2+y -1.
\end{eqnarray*}

Note that since $S^2_{n-2}(y) + S^2_{n-3}(y) -y S_{n-2}(y) S_{n-3}(y) = 1$ we can write 
\begin{eqnarray*}
&& y +( S_{n-2}(y) - y S_{n-3}(y)) S_{n-2}(y) \\
&=& (y+1) S^2_{n-2}(y) + y S^2_{n-3}(y) - (y^2+y)  S_{n-2}(y) S_{n-3}(y).
\end{eqnarray*}
By direct calculations we have
\begin{eqnarray*}
D &=& \begin{vmatrix}
S_{n-2}(y) - y S_{n-3}(y) & (y+1) S^2_{n-2}(y) + y S^2_{n-3}(y) - (y^2+y)  S_{n-2}(y) S_{n-3}(y)\\
y & S_{n-2}(y) - y S_{n-3}(y)
\end{vmatrix}
\\
&=& - [(y^2+y-1) S_{n-2}(y) - (y^3 + y^2 - 2y) S_{n-3}(y)] S_{n-2}(y), \\
D_g &=&  \begin{vmatrix}
- S_{n-3}(y) & (y+1) S^2_{n-2}(y) + y S^2_{n-3}(y) - (y^2+y)  S_{n-2}(y) S_{n-3}(y)\\
y^2+y -1 & S_{n-2}(y) - y S_{n-3}(y)
\end{vmatrix}
\\
&=& - [(y^2+y -1) S_{n-2}(y) - (y^3+y^2-2y) S_{n-3}(y)] [(y+1)S_{n-2}(y) - S_{n-3}(y)], \\
D_h 
&=& \begin{vmatrix}
S_{n-2}(y) - y S_{n-3}(y) & - S_{n-3}(y) \\
y & y^2+y -1
\end{vmatrix}
\\
&=& (y^2+y -1) S_{n-2}(y) - (y^3+y^2-2y) S_{n-3}(y).
\end{eqnarray*}
Assume $S_{n-2}(y) \not=0$. We can take $g= \frac{(y+1)S_{n-2}(y) - S_{n-3}(y)}{S_{n-2}(y)}$ and $h = - \frac{1}{S_{n-2}(y)}$. Then
$$
x = z (g + hr) = z \left( y+1-\frac{S_{n-3}(y)+r}{S_{n-2}(y)} \right).
$$
Hence we have shown that $Q=R_n=0$ if $pz S_{n-2}(y) \not= 0$, $x = z \left( y+1-\frac{S_{n-3}(y)+r}{S_{n-2}(y)} \right)$ and $z^2 - 1 = (S_{n-2}(y) - S_{n-3}(y)) r$. 

However, it turns out that the condition $pz \not= 0$ can be relaxed. More precisely, we have the following. 

\begin{proposition} \label{sol}
Let $r \in \BC^*$ such that $r - r^{-1} =S_{n-2}(y)$. 
Suppose $S_{n-2}(y) \not=0$, $x =z(y+1-\frac{S_{n-3}(y)+r}{S_{n-2}(y)})$ and $z^2 = 1 + (S_{n-2}(y) - S_{n-3}(y))r$. Then $Q=R_n=0$.
\end{proposition}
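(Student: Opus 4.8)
The plan is to reverse-engineer the chain of equivalences established just before the statement. In the body text it was shown that if $pz \neq 0$ and $S_{n-2}(y)\neq 0$, then under the stated formulas for $x$ and $z^2$ one has $Q = R_n = 0$; so the only thing left is to remove the hypothesis $pz \neq 0$, i.e. to treat the degenerate cases $z = 0$ and $p = 0$ directly. I would organize the proof around these two cases, after recording the algebraic identities that make the formulas consistent.

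First I would substitute the given expressions into $S_{n-2}(y)\,Q + z R_n = -px + qz$. With $z^2 = 1 + (S_{n-2}(y)-S_{n-3}(y))r$ and $r^2 - S_{n-2}(y)r - 1 = 0$ (which is exactly $r - r^{-1} = S_{n-2}(y)$ rewritten), the earlier computation gives $p = (S_{n-2}(y)-S_{n-3}(y))[\,y + r(S_{n-2}(y) - y S_{n-3}(y))\,]$ and $q = (S_{n-2}(y)-S_{n-3}(y))[\,y^2+y-1 - r S_{n-3}(y)\,]$, and the choice $x = z(y+1 - \frac{S_{n-3}(y)+r}{S_{n-2}(y)})$ is precisely the one forcing $-px + qz = 0$. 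Hence $S_{n-2}(y)\,Q + zR_n = 0$ unconditionally under the hypotheses. Since $S_{n-2}(y) \neq 0$, it follows that $Q = 0 \iff zR_n = 0$; so it suffices to prove $Q = 0$, and then $R_n = 0$ will follow whenever $z \neq 0$, with the case $z = 0$ handled separately.

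Next I would show $Q = 0$ directly. Writing $Q = x^2 z - x(yz^2 + y - 1) + (y^2 + z^2 - 3)z$ and plugging in $x = zg + zhr$ with $g = \frac{(y+1)S_{n-2}(y)-S_{n-3}(y)}{S_{n-2}(y)}$, $h = -\frac{1}{S_{n-2}(y)}$, everything becomes a polynomial in $y$, $z^2$ and $r$; repeatedly using $r^2 = S_{n-2}(y)r + 1$ to reduce powers of $r$, and $z^2 = 1 + (S_{n-2}(y)-S_{n-3}(y))r$ to eliminate $z^2$, and finally Lemma \ref{chev} ($S_{n-2}^2 + S_{n-3}^2 - yS_{n-2}S_{n-3} = 1$) to collapse the Chebyshev terms, should give $Q = 0$ identically. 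This is the routine-but-delicate computational heart of the argument and is essentially the $D, D_g, D_h$ bookkeeping already displayed, read in reverse.

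Finally, the degenerate subcases. If $z \neq 0$, the above gives $Q = 0$ and then $zR_n = 0$ forces $R_n = 0$, done. If $z = 0$: then $z^2 = 0$ gives $(S_{n-2}(y)-S_{n-3}(y))r = -1$, and $x = 0$ as well; I would substitute $x = z = 0$ into the original $Q$ and $R_n$ (not the reduced quadratics) — $Q = x - xy + (\cdots)z$ vanishes trivially, and $R_n = (y+2)S_{n-2}(y) - (y^2+y-2)S_{n-3}(y)$ must be shown to vanish using $(S_{n-2}(y)-S_{n-3}(y))r = -1$ together with $r^2 - S_{n-2}(y)r - 1 = 0$, which pins down $S_{n-2}(y)$ and $S_{n-3}(y)$ enough to conclude. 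The main obstacle I anticipate is not any single step but keeping the three-term reductions (in $r$, in $z^2$, in the Chebyshev relation) consistent so that the $p = 0$ possibility is genuinely absorbed rather than excluded; concretely, one should check that the identity $S_{n-2}(y)Q + zR_n = -px+qz = 0$ together with $Q = 0$ does not secretly require $p \neq 0$, which it does not since we prove $Q=0$ and $R_n=0$ outright rather than dividing by $p$.
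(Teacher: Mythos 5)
Your proposal is correct and follows essentially the same route as the paper: prove $Q=0$ by direct substitution, use the identity $S_{n-2}(y)Q+zR_n=-px+qz$ to deduce $R_n=0$ when $z\neq 0$, and treat $z=0$ separately by combining $1+(S_{n-2}(y)-S_{n-3}(y))r=0$ with $r-r^{-1}=S_{n-2}(y)$ and Lemma \ref{chev} to get $S_{n-2}(y)=(y-1)S_{n-3}(y)$, killing $R_n$. The one organizational difference is in the $z\neq 0$ case: the paper splits into $p=0$ and $p\neq 0$, using $S=(z^2-y^3+3y-2)T=0$ to force $q=0$ when $p=0$, whereas you claim $-px+qz=0$ holds identically. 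Your claim is true, but it needs the observation that $g=\frac{(y+1)S_{n-2}(y)-S_{n-3}(y)}{S_{n-2}(y)}$, $h=-\frac{1}{S_{n-2}(y)}$ satisfy the displayed linear system using only $S_{n-2}(y)\neq 0$ and Lemma \ref{chev} — not merely that they arise from Cramer's rule after cancelling a possibly vanishing common factor of $D$ and $D_g$; once that is checked your route cleanly absorbs $p=0$ and is arguably tidier than the paper's. Two caveats: the central verification that $Q$ reduces to $0$ is asserted rather than carried out (the paper does it via the substitution $d=e-f$ with $f^2-f=1/S^2_{n-2}(y)$, and it is not the ``$D,D_g,D_h$ bookkeeping read in reverse'' — that computation only establishes $x=zq/p$), so your outline is a correct plan whose computational heart still has to be executed.
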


\begin{proof}
We first prove that $Q=0$. Let $d= \frac{S_{n-2}(y) - S_{n-3}(y) - r}{S_{n-2}(y)}$. Since $x = z (y + d)$ we have $Q= z Q'$, where 
\begin{eqnarray*}
Q' &=& (y+d) (1-y) + x^2+y^2+z^2-xyz-3 \\
&=& (y + d)(1-y) + (y + d)^2 z^2  + y^2 + z^2 - (y + d) y z^2 -3
\\
&=& d(1-y) + y-3 + (d^2+yd+1) z^2 \\
&=& d^2+d+y-2 + (d^2+yd+1) (z^2-1) \\
&=& d^2+d+y -2+ (d^2+yd+1) (S_{n-2}(y) - S_{n-3}(y)) r.
\end{eqnarray*}

Write $d = e-f$, where $e =  \frac{S_{n-2}(y) - S_{n-3}(y)}{S_{n-2}(y)} $ and $f = \frac{r}{S_{n-2}(y)}$. Since $r^2 - S_{n-2}(y) r = 1$ we have $f^2 - f = \frac{1}{S^2_{n-2}(y)}$. Then
\begin{eqnarray*}
d^2+d + y-2 &=& f^2 - f  - 2ef+e^2+e  + y-2 \\
&=& \frac{1}{S^2_{n-2}(y)} + y -2  - 2ef+e^2+e. 
\end{eqnarray*}
Since $S^2_{n-2}(y) + S^2_{n-3}(y) -y S_{n-2}(y) S_{n-3}(y) =1$ we have
$$
\frac{1 + (y-2)S^2_{n-2}(y)} {S^2_{n-2}(y)} = \frac{[S_{n-2}(y) - S_{n-3}(y)] [(y-1)S_{n-2}(y) - S_{n-3}(y)]} {S^2_{n-2}(y)}.
$$
This implies that 
\begin{eqnarray*}
d^2+d + y-2 &=& e (y-2+e)  - 2ef+e^2+e \\
&=& e (y-1+2e-2f).
\end{eqnarray*}
Hence
\begin{eqnarray*}
Q' &=& e (y-1+2e-2f)+ (d^2+yd+1) ef S^2_{n-2}(y) \\
&=& S^2_{n-2}(y) [e (y-1+2e-2f) (f^2-f)+ (d^2+yd+1) ef] \\
&=& ef S^2_{n-2}(y) [(y-1+2e-2f) (f-1)+ (e-f)^2+y(e-f)+1] \\
&=& ef S^2_{n-2}(y) [ 1+ (e-1)^2+(e-1)y -(f^2-f)] \\
&=& ef S^2_{n-2}(y) \left[ 1 + \frac{S^2_{n-3}(y) }{S^2_{n-2}(y)} - \frac{S_{n-3}(y) }{S_{n-2}(y)} y - \frac{1}{S^2_{n-2}(y)} \right] \\
&=& ef [S^2_{n-2}(y) + S^2_{n-3}(y) -y S_{n-2}(y) S_{n-3}(y) -1] \\
&=& 0.
\end{eqnarray*}

We now prove that $R_n=0$. Note that 
$$
T= (z^2-1)^2  - S_{n-2}(y) (S_{n-2}(y) -S_{n-3}(y)) (z^2 - 1)  -  (S_{n-2}(y) -S_{n-3}(y))^2 = 0,
$$
since $z^2 - 1 = r (S_{n-2}(y) - S_{n-3}(y))$ and $r^2 - S_{n-2}(y) r -1 =0$. This implies that
$$
S = z^2q^2 - (yz^2+y-1)pq + (y^2+z^2-3)p^2 = (z^2-y^3+3y-2) T = 0.
$$

Case 1: $z \not= 0$. We have $z R_n = z R_n + S_{n-2}(y) Q=  -p x + q z$. 

If $p=0$, then since $z^2q^2 - (yz^2+y-1)pq + (y^2+z^2-3)p^2$ we get $q =0$.

If $p \not= 0$, then 
\begin{eqnarray*}
x &=& z \left( y+1-\frac{S_{n-3}(y)+r}{S_{n-2}(y)} \right) \\
&=& z \frac{y^2+y-1 - r S_{n-3}(y)  }{  y + r ( S_{n-2}(y) - y S_{n-3}(y))} \\
&=& z \frac{q}{p}.
\end{eqnarray*} 
In both cases we have $-p x + q z = 0$. Hence $R_n=0$.

Case 2: $z=0$. Then  $x =z(y+1-\frac{S_{n-3}(y)+r}{S_{n-2}(y)}) = 0$ and 
\begin{eqnarray*}
R_n &=& (y+2) S_{n-2}(y) - (y^2 + y -2)S_{n-3}(y) \\
&=& (y+2) [S_{n-2}(y) - (y-1)S_{n-3}(y)].
\end{eqnarray*}
 
Since $T=0$ and $z=0$, we have 
$$
1 + S_{n-2}(y) (S_{n-2}(y) -S_{n-3}(y))  -  (S_{n-2}(y) -S_{n-3}(y))^2 = 0.
$$ 
Combining with $1 = S^2_{n-2}(y) + S^2_{n-3}(y) -y S_{n-2}(y) S_{n-3}(y)$, we obtain 
$$
S_{n-2}(y) [ S_{n-2}(y)  - (y-1) S_{n-3}(y)  ]= 0 .
$$ 
Since $S_{n-2}(y) \not= 0$, we must have $S_{n-2}(y)  - (y-1) S_{n-3}(y) = 0$. Hence $R_n = 0$. 
\end{proof}

\section{Roots of the Alexander polynomial} \label{Alex}

In this section we find certain roots of the Alexander polynomial of $\CK_n$ on the unit circle, which will then be used in the next section to construct continuous paths of $\mathrm{SL}_2(\BR)$-representations of $\CK_n$.

Note that the $(-2,3,2n+1)$-pretzel knot $\CK_n$ is equal to the $(n-2)$-twisted $(3, 5)$-torus knot. By \cite{Tr-twistedtorus}, the Alexander polynomial of the $(n-2)$-twisted $(3, 3m+2)$-torus knot, with $n, m \ge 1$, is given by  $[t^{2n+6m-1} + 1- (t^{2n+3m-1}-t^{2})(t^{3m}-1)/(t^2+t+1)]/(t+1)$. By taking $m=1$ we obtain
$$
\Delta_{\CK_n}(t) = (t^{2n+5}-t^{2n+3}+t^{2n+2}+t^3-t^2+1)/(t+1).
$$

As in \cite[Section 3]{Tr-twistedtorus}, we will find certain solutions of $\Delta_{\CK_n}(t)=0$ on the unit circle. With $t = e^{i \theta}$ we have $\cos \frac{(2n+5)\theta}{2} - \cos \frac{(2n+1)\theta}{2} + \cos \frac{(2n-1)\theta}{2}  = 0$. This is equivalent to 
$$
2\cos(n+1)\theta \cos \frac{3\theta}{2}  - \cos \frac{(2n+1)\theta}{2}  = 0.
$$

For $n \ge 3$, we define an interval $[\alpha_n, \beta_n] \subset \BR$  by
$$
[\alpha_n, \beta_n] = \begin{cases}
			\left[ \frac{2\pi}{3}-\frac{\pi}{6n+15}, \frac{2\pi}{3} + \frac{\pi}{6n+3} \right] & \text{if } n \equiv 0 \pmod{3}\\
            \left[ \frac{2\pi}{3} + \frac{\pi}{6n-3}, \frac{2\pi}{3} +  \frac{\pi}{2n+1} \right] & \text{if } n \equiv 1 \pmod{3} \\
            \left[ \frac{2\pi}{3} - \frac{\pi}{2n+5}, \frac{2\pi}{3} -  \frac{\pi}{6n+3} \right] & \text{if } n \equiv 2 \pmod{3}
		 \end{cases}.
$$ 
We also define $\gamma_n \in (\alpha_n, \beta_n)$ by 
$$
\gamma_n = \begin{cases}
			\frac{2\pi}{3} & \text{if } n \equiv 0 \pmod{3}\\
           \frac{2\pi}{3} + \frac{4\pi}{12n+9} & \text{if } n \equiv 1 \pmod{3}\\
           \frac{2\pi}{3} - \frac{4\pi}{12n+9} & \text{if } n \equiv 2 \pmod{3}
		 \end{cases}.
$$ 

Let $F(\theta) : = 2\cos(n+1)\theta \cos \frac{3\theta}{2}  - \cos \frac{(2n+1)\theta}{2}$ for $\theta \in [\alpha_n, \beta_n]$.

\begin{proposition} \label{root}
The equation $F(\theta) = 0$ has a unique solution $\theta_n$ on $[\alpha_n, \beta_n]$. Moreover $\theta_n \in [\alpha_n, \gamma_n)$.
\end{proposition}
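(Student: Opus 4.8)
The plan is to analyze $F(\theta) = 2\cos(n+1)\theta \cos\frac{3\theta}{2} - \cos\frac{(2n+1)\theta}{2}$ on the small interval $[\alpha_n,\beta_n]$ centered near $\theta = \frac{2\pi}{3}$, treating the three residue classes of $n$ modulo $3$ separately (as the definitions of $\alpha_n,\beta_n,\gamma_n$ already do). For \emph{existence}, I would evaluate $F$ at the endpoints $\alpha_n$ and $\beta_n$ and show it changes sign; since $F$ is continuous, the Intermediate Value Theorem then produces a root $\theta_n$. To make the endpoint evaluations tractable, note that at $\theta$ close to $\frac{2\pi}{3}$ the factor $\cos\frac{3\theta}{2}$ is close to $\cos\pi = -1$, and the specific endpoints have been chosen so that one of the three cosine factors vanishes or is easy to pin down: e.g. for $n\equiv 0$, at $\theta = \frac{2\pi}{3} - \frac{\pi}{6n+15}$ one checks that $\cos\frac{(2n+5)\theta}{2}$-type terms simplify, and at $\theta = \frac{2\pi}{3} + \frac{\pi}{6n+3}$ the argument $(n+1)\theta$ or $\frac{(2n+1)\theta}{2}$ hits an odd multiple of $\frac{\pi}{2}$, killing a term. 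The arithmetic is routine once one writes each cosine argument as $\frac{2\pi}{3}\cdot(\text{integer}) + (\text{small correction})$.

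For \emph{uniqueness}, I would show $F$ is strictly monotone on $[\alpha_n,\beta_n]$, or at least has no interior critical point, by examining $F'(\theta) = -2(n+1)\sin(n+1)\theta\cos\frac{3\theta}{2} - 3\cos(n+1)\theta\sin\frac{3\theta}{2} + \frac{2n+1}{2}\sin\frac{(2n+1)\theta}{2}$. On an interval of length $O(1/n)$ around $\frac{2\pi}{3}$, the dominant term is the one with the $(n+1)$ coefficient, namely $-2(n+1)\sin(n+1)\theta\cos\frac{3\theta}{2}$; since $\cos\frac{3\theta}{2}\approx -1$ has fixed sign on $[\alpha_n,\beta_n]$ and $(n+1)\theta$ ranges over an interval of length $O(1)$ (in fact less than $\pi$) on which $\sin(n+1)\theta$ keeps a fixed sign, this term dominates the two $O(1)$-bounded and $O(n)$-bounded remaining terms for $n$ large, forcing $F' \neq 0$. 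One must verify the constant in front is genuinely larger, i.e. $2(n+1)|\sin(n+1)\theta||\cos\frac{3\theta}{2}| > 3|\sin\frac{3\theta}{2}| + \frac{2n+1}{2}|\sin\frac{(2n+1)\theta}{2}|$ on the interval; this needs a lower bound on $|\sin(n+1)\theta|$ bounded away from $0$ on $[\alpha_n,\beta_n]$, which holds because the chosen interval keeps $(n+1)\theta$ a bounded distance from every integer multiple of $\pi$. Small cases ($n=3,4,5$, say) can be checked by hand or noted to follow from the same estimates.

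Finally, to get $\theta_n \in [\alpha_n,\gamma_n)$, I would evaluate $F(\gamma_n)$ and show it has the same sign as $F(\beta_n)$ (equivalently, the opposite sign to $F(\alpha_n)$); combined with monotonicity and the sign change already established on $[\alpha_n,\beta_n]$, this localizes the unique root to the subinterval $[\alpha_n,\gamma_n)$. For $n\equiv 0$ this is the cleanest: $\gamma_n = \frac{2\pi}{3}$, so $F(\gamma_n) = 2\cos\frac{2(n+1)\pi}{3}\cos\pi - \cos\frac{(2n+1)\pi}{3} = -2\cos\frac{2(n+1)\pi}{3} - \cos\frac{(2n+1)\pi}{3}$, a concrete value depending only on $n \bmod 3$ that one checks is nonzero of the correct sign. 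For $n\equiv 1,2$ the shift $\pm\frac{4\pi}{12n+9}$ is engineered so that $(n+1)\gamma_n$ or $\frac{(2n+1)\gamma_n}{2}$ again lands at a convenient point; I expect the main obstacle to be bookkeeping these three parallel computations carefully and confirming the sign of $F(\gamma_n)$ in each case, rather than any conceptual difficulty — the derivative estimate for uniqueness is the only place requiring a genuine inequality, and even there the $(n+1)$-weighted term wins comfortably.
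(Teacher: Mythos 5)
Your three-step architecture (sign change at the endpoints for existence, global monotonicity of $F$ on $[\alpha_n,\beta_n]$ for uniqueness, a sign check at $\gamma_n$ for the localization) is coherent, but the inequality you rely on to prove monotonicity is false, and not only for small $n$. You propose to bound $F'$ away from zero via
$$
2(n+1)\,\bigl|\sin (n+1)\theta\bigr|\,\bigl|\cos \tfrac{3\theta}{2}\bigr| \;>\; 3\,\bigl|\sin \tfrac{3\theta}{2}\bigr| + \tfrac{2n+1}{2}\,\bigl|\sin \tfrac{(2n+1)\theta}{2}\bigr|.
$$
Take $n\equiv 1\pmod 3$ and $\theta=\beta_n=\frac{2\pi}{3}+\frac{\pi}{2n+1}$. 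Then $\frac{(2n+1)\theta}{2}\equiv\frac{\pi}{2}\pmod{2\pi}$, so the right-hand side is at least $n+\frac12$; meanwhile $(n+1)\theta\equiv\frac{11\pi}{6}+\frac{\pi}{2(2n+1)}\pmod{2\pi}$, so $|\sin(n+1)\theta|=\sin\bigl(\frac{\pi}{6}-\frac{\pi}{2(2n+1)}\bigr)<\frac12$ and a short estimate gives a left-hand side at most $n+1-\frac{\sqrt3}{2}\approx n+0.13$. (Concretely, for $n=7$ at $\theta=\beta_7$ the left side is about $6.19$ and the right side about $8.43$.) The same failure occurs for $n\equiv 0\pmod 3$ at $\theta=\beta_n$ for all small and moderate $n$ (for $n=3$: $3.38$ versus $4.17$). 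The structural problem is that the two $O(n)$ terms are $n+1$ versus $n+\frac12$, and the slack of $\frac12$ is consumed by $|\sin(n+1)\theta|$ dipping below $\frac12$, by $|\cos\frac{3\theta}{2}|<1$, and by the $3|\sin\frac{3\theta}{2}|$ term. So the $(n+1)$-weighted term does not "win comfortably"; near $\beta_n$ it loses. The derivative does in fact stay nonzero there, but only because the $(n+1)$- and $(2n+1)$-terms turn out to have the \emph{same} sign near $\beta_n$ — information your absolute-value bound discards.

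The paper avoids this by never proving monotonicity on all of $[\alpha_n,\beta_n]$. After shifting to $\ve=\pm(\theta-\frac{2\pi}{3})$ it splits the interval at (the image of) $\gamma_n$: on the piece adjacent to $\beta_n$ it shows directly that $F$ has constant sign, using that the two outer cosine arguments $A$ and $C$ in $2\cos A\cos B-\cos C$ satisfy $0<A\le C\le\frac{\pi}{2}$ there, whence $|\cos C|\le|\cos A|$ and $\widetilde F$ is bounded (in absolute value) below by $(2\cos B-1)\cos A>0$; monotonicity is only invoked on the piece $[\alpha_n,\gamma_n)$ containing the root, where the sign configuration of $F'$ is favorable. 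This split also yields $\theta_n\in[\alpha_n,\gamma_n)$ immediately. To repair your argument you would either have to track the signs of the three terms of $F'$ rather than their absolute values, or adopt the split at $\gamma_n$ and replace monotonicity near $\beta_n$ by a constant-sign estimate of the above type.
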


\begin{proof} 
Let $\ve = \begin{cases}
			\frac{2\pi}{3} - \theta & \text{if } n \not\equiv 1 \pmod{3}\\
            \theta - \frac{2\pi}{3}& \text{if } n \equiv 1 \pmod{3}
		 \end{cases}
$ and $\widetilde{F}(\ve) :=  F(\theta)$. 

Case 1: $n \equiv 0 \pmod{3}$. Then $\ve = \frac{2\pi}{3} - \theta \in \left[ -\frac{\pi}{6n+3}, \frac{\pi}{6n+15} \right]$ and 
\begin{eqnarray*}
\widetilde{F}(\ve)
&=& 2\cos \left( \frac{(2n+2)\pi}{3}-(n+1)\ve \right) \cos \left( \pi - \frac{3\ve}{2} \right) -\cos \left( \frac{(2n+1)\pi}{3}  - \frac{(2n+1)\ve}{2} \right) \\
&=& 2\cos \left( \frac{\pi}{3}+(n+1)\ve \right) \cos \frac{3\ve}{2}  -\cos \left( \frac{\pi}{3} - \frac{(2n+1)\ve}{2} \right).
\end{eqnarray*}
We claim that $\widetilde{F}(\ve) = 0$ has a unique solution in $\big( 0, \frac{\pi}{6n+15} \big]$ and no solutions in $\big[ - \frac{\pi}{6n+3}, 0 \big]$. 

If $\ve \in \left[ - \frac{\pi}{6n+3}, 0 \right]$, then $0 < \frac{\pi}{3}+(n+1)\ve \le \frac{\pi}{3}-\frac{(2n+1)\ve}{2} \le \frac{\pi}{2}$. Hence
$$
\widetilde{F}(\ve) \ge    \left( 2\cos \frac{3\ve}{2} - 1 \right)  \cos \left( \frac{\pi}{3}+(n+1)\ve\right) >0.
$$

If $\ve \in \big( 0,  \frac{\pi}{6n+15} \big]$, then $0 < \frac{\pi}{3}-\frac{(2n+1)\ve}{2} < \frac{\pi}{3}+(n+1)\ve < \frac{\pi}{2}$. It is easy to see that $\widetilde{F}(\ve)$ is a decreasing function. Note that $f(0) = \frac{1}{2}$. Since $\widetilde{F}(\ve)
= 2\sin \left( \frac{\pi}{6} - (n+1)\ve \right) \cos \frac{3\ve}{2}  -\sin\left( \frac{\pi}{6} + \frac{(2n+1)\ve}{2} \right)$ we have
\begin{eqnarray*}
\widetilde{F} \left( \frac{\pi}{6n+15} \right) &=& 2\sin \frac{\pi}{4n+10} \cos \frac{\pi}{4n+10}  -\sin\left( \frac{\pi}{6} + \frac{(2n+1)\pi}{12n+30} \right) \\
&=& \sin \frac{\pi}{2n+5}  -\sin\left( \frac{\pi}{6} + \frac{(2n+1)\pi}{12n+30} \right) \\
&=& - 2 \sin \frac{n\pi}{6n+15} \cos \frac{(n+3)\pi}{6n+15} \\
&<& 0. 
\end{eqnarray*}
Hence there exists a unique solution of $\widetilde{F}(\ve) = 0$ in $\left( 0,  \frac{\pi}{6n+15} \right]$. 

Case 2: $n \equiv 1 \pmod{3}$.  Then $\ve =  \theta - \frac{2\pi}{3}  \in \left[ \frac{\pi}{6n-3}, \frac{\pi}{2n+1} \right]$ and 
\begin{eqnarray*}
\widetilde{F}(\ve) 
&=& 2\cos \left( \frac{(2n+2)\pi}{3}+(n+1)\ve \right) \cos \left( \pi +\frac{3\ve}{2} \right) -\cos \left( \frac{(2n+1)\pi}{3}  + \frac{(2n+1)\ve}{2} \right) \\
&=& - 2\cos \left( \frac{2\pi}{3} - (n+1)\ve \right) \cos \frac{3\ve}{2}  +\cos \frac{(2n+1)\ve}{2}.
\end{eqnarray*}
We claim that $\widetilde{F}(\ve) =0$ has a unique solution in $\big[ \frac{\pi}{6n-3}, \frac{4\pi}{12n+9} \big)$ and no solutions in $\big[  \frac{4\pi}{12n+9}, \frac{\pi}{2n+1} \big]$. Note that $\frac{2\pi}{3} - (n+1)\ve = \frac{(2n+1)\ve}{2}$ if $\ve = \frac{4\pi}{12n+9}$. 

If $\ve \in \big[  \frac{4\pi}{12n+9}, \frac{\pi}{2n+1} \big]$, then $0 < \frac{2\pi}{3} - (n+1)\ve \le \frac{(2n+1)\ve}{2} \le \frac{\pi}{2}$. Hence
$$
\widetilde{F}(\ve) \le  \left( 1-2 \cos \frac{3\ve}{2} \right) \cos \left( \frac{2\pi}{3} - (n+1)\ve \right) < 0.
$$

If $\ve \in \big[ \frac{\pi}{6n-3},  \frac{4\pi}{12n+9} \big)$, then $0  < \frac{3\ve}{2} < \frac{(2n+1)\ve}{2} < \frac{2\pi}{3} - (n+1)\ve < \frac{\pi}{2}$. We have 
\begin{eqnarray*}
\frac{d\widetilde{F}}{d\ve} &=& -2 (n+1) \sin \left( \frac{2\pi}{3} - (n+1)\ve \right) \cos \frac{3\ve}{2}
+ 3\cos \left( \frac{2\pi}{3} - (n+1)\ve \right) \sin \frac{3\ve}{2}  \\
&& -  \, \frac{2n+1}{2} \sin  \frac{(2n+1)\ve}{2}\\
 &<&-3\sin \left( \frac{2\pi}{3} - (n+1)\ve \right) \cos \frac{3\ve}{2}
+ 3\cos \left( \frac{2\pi}{3} - (n+1)\ve \right) \sin \frac{3\ve}{2}  \\
&& -  \, \frac{2n+1}{2} \sin  \frac{(2n+1)\ve}{2}\\
 &=& - \left[ 3 \sin \left( \frac{2\pi}{3} - (n+1)\ve - \frac{3\ve}{2}   \right) +  \frac{2n+1}{2} \sin  \frac{(2n+1)\ve}{2} \right] \\
 &<& 0.
\end{eqnarray*}
Then $\widetilde{F}(\ve)$ is a decreasing function. Note that $\widetilde{F} \left( \frac{4\pi}{12n+9} \right) < 0$.

Since $\widetilde{F} (\ve) = - 2\sin \left( (n+1)\ve - \frac{\pi}{6} \right) \cos \frac{3\ve}{2}  +\sin (\frac{\pi}{2} - \frac{(2n+1)\ve}{2}) $ we have  
\begin{eqnarray*}
\widetilde{F}  \left( \frac{\pi}{6n-3} \right) 
&=& - 2\sin \frac{\pi}{4n-2}  \cos \frac{\pi}{4n-2}  +\sin \left( \frac{\pi}{2} - \frac{(2n+1)\pi}{12n-6} \right) \\
&=& -\sin \frac{\pi}{2n-1} +\sin \left( \frac{\pi}{2} - \frac{(2n+1)\pi}{12n-6} \right) \\
&=& 2 \sin \frac{(2n-5)\pi}{12n-6} \cos \frac{(2n+1)\pi}{12n-6} \\
&>& 0.
\end{eqnarray*}
Hence there exists a unique solution of $\widetilde{F}(\ve) = 0$ in $\big[ \frac{\pi}{6n-3},  \frac{4\pi}{12n+9} \big)$. 

Case 3: $n \equiv 2 \pmod{3}$.  Then $\ve = \frac{2\pi}{3} - \theta \in \left[ \frac{\pi}{6n+3}, \frac{\pi}{2n+5} \right]$ and 
\begin{eqnarray*}
\widetilde{F} (\ve)
&=& 2\cos \left( \frac{(2n+2)\pi}{3}-(n+1)\ve \right) \cos \left( \pi - \frac{3\ve}{2} \right) -\cos \left( \frac{(2n+1)\pi}{3}  - \frac{(2n+1)\ve}{2} \right) \\
&=& - 2\cos \left(  (n+1)\ve \right) \cos \frac{3\ve}{2}  + \cos \left( \frac{2\pi}{3} - \frac{(2n+1)\ve}{2} \right).
\end{eqnarray*}
We claim that $\widetilde{F}(\ve) =0$ has a unique solution in $\big( \frac{4\pi}{12n+9}, \frac{\pi}{2n+5} \big]$ and no roots in $\big[ \frac{\pi}{6n+3}, \frac{4\pi}{12n+9} \big]$.

If $\ve \in \big[\frac{\pi}{6n+3}, \frac{4\pi}{12n+9} \big]$, then $0  < (n+1)\ve \le \frac{2\pi}{3}-\frac{(2n+1)\ve}{2} \le \frac{\pi}{2}$. 
Hence
$$
\widetilde{F} (\ve) \le   \left( 1- 2\cos \frac{3\ve}{2}  \right)  \cos \left( (n+1)\ve \right) < 0.
$$

If $\ve \in \big( \frac{4\pi}{12n+9},  \frac{\pi}{2n+5} \big]$, then $0 < \frac{2\pi}{3}-\frac{(2n+1)\ve}{2} < (n+1)\ve < \frac{\pi}{2}$. 
It is easy to see that $\widetilde{F} (\ve)$ is an increasing function. Note that $\widetilde{F} \left( \frac{4\pi}{12n+9} \right) < 0$.

Since $\widetilde{F} (\ve) = - 2\sin  \left( \frac{\pi}{2} -  (n+1)\ve \right) \cos \frac{3\ve}{2}  + \sin \left( \frac{(2n+1)\ve}{2} - \frac{\pi}{6} \right)$ we have 
\begin{eqnarray*}
\widetilde{F} \left( \frac{\pi}{2n+5}\right) &=& - 2\sin \frac{3\pi}{4n+10} \cos \frac{3\pi}{4n+10}  + \sin \left( \frac{(2n+1)\pi}{4n+10} - \frac{\pi}{6} \right) \\
&=& - \sin \frac{3\pi}{2n+5}  + \sin \left( \frac{(2n+1)\pi}{4n+10} - \frac{\pi}{6} \right) \\
&=& 2 \sin \frac{(n-5)\pi}{6n+15} \cos \frac{(n+4)\pi}{6n+15}  \\
&\ge& 0.
\end{eqnarray*}
Hence there exists a unique solution of $\widetilde{F}(\ve) = 0$ in $\big( \frac{4\pi}{12n+9},  \frac{\pi}{2n+5} \big]$. 
\end{proof}

\begin{lemma} \label{sign}
The following hold true.

$(1)$ If $n \equiv 0 \pmod{3}$, then $S_{n-2}(2\cos\theta) < 0$ and $S_{n-2}(2\cos\theta) - S_{n-3}(2\cos\theta) < 0$ on $[\theta_n, \beta_n]$.

$(2)$ If $n \not\equiv 0 \pmod{3}$, then $S_{n-2}(2\cos\theta) > 0$ and $S_{n-2}(2\cos\theta) - S_{n-3}(2\cos\theta) > 0$ on $[\theta_n, \beta_n]$. 
\end{lemma}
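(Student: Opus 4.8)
The plan is to reduce both sign statements to Lemma~\ref{chev-sign} once we pin down where the interval $[\theta_n,\beta_n]$ sits relative to the zeros of $S_{n-2}$ and of $S_{n-2}-S_{n-3}$. Set $y=2\cos\theta$. Since $t\mapsto 2\cos t$ is strictly decreasing on $[0,\pi]$ and $[\alpha_n,\beta_n]\subset(0,\pi)$, the hypothesis ``$y$ lies between two consecutive zeros'' in Lemma~\ref{chev-sign} becomes ``$\theta$ lies between two consecutive angles'': part~(1) gives $(-1)^{j}S_{n-2}(2\cos\theta)>0$ for $\theta\in\bigl(\tfrac{j\pi}{n-1},\tfrac{(j+1)\pi}{n-1}\bigr)$, and part~(2) gives $(-1)^{j}\bigl(S_{n-2}(2\cos\theta)-S_{n-3}(2\cos\theta)\bigr)>0$ for $\theta\in\bigl(\tfrac{(2j-1)\pi}{2n-3},\tfrac{(2j+1)\pi}{2n-3}\bigr)$. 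By Proposition~\ref{root}, $[\theta_n,\beta_n]\subset[\alpha_n,\beta_n]$, so it suffices to place the closed interval $[\alpha_n,\beta_n]$ strictly inside one such open gap and then record the parity of $j$; strict containment inside the open gap yields the desired strict inequality on all of $[\theta_n,\beta_n]$.

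Writing $n=3m,\ 3m+1,\ 3m+2$ according to the residue of $n$ modulo $3$, I would verify the following nestings, each of which unwinds to a short list of linear inequalities in $m$ coming from the explicit forms of $\alpha_n,\beta_n$. For $n=3m$: $[\alpha_n,\beta_n]\subset\bigl(\tfrac{(2m-1)\pi}{3m-1},\tfrac{2m\pi}{3m-1}\bigr)$ and $[\alpha_n,\beta_n]\subset\bigl(\tfrac{(4m-3)\pi}{6m-3},\tfrac{(4m-1)\pi}{6m-3}\bigr)$, with relevant index $j=2m-1$ in both cases; since $2m-1$ is odd this gives $S_{n-2}(2\cos\theta)<0$ and $S_{n-2}(2\cos\theta)-S_{n-3}(2\cos\theta)<0$, proving part~(1). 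For $n=3m+1$: $[\alpha_n,\beta_n]\subset\bigl(\tfrac{2m\pi}{3m},\tfrac{(2m+1)\pi}{3m}\bigr)$ and $[\alpha_n,\beta_n]\subset\bigl(\tfrac{(4m-1)\pi}{6m-1},\tfrac{(4m+1)\pi}{6m-1}\bigr)$, with index $j=2m$; being even this gives both quantities $>0$. For $n=3m+2$: $[\alpha_n,\beta_n]\subset\bigl(\tfrac{2m\pi}{3m+1},\tfrac{(2m+1)\pi}{3m+1}\bigr)$ and $[\alpha_n,\beta_n]\subset\bigl(\tfrac{(4m-1)\pi}{6m+1},\tfrac{(4m+1)\pi}{6m+1}\bigr)$, again with index $j=2m$ even, giving both quantities $>0$. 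Together with the first case this proves part~(2).

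The one delicate point, and the part I expect to need the most care, is the case $n\equiv1\pmod3$: there $2\cos\tfrac{2\pi}{3}=-1$ is an actual zero of $S_{n-2}$ (indeed $\tfrac{2\pi}{3}=\tfrac{2m\pi}{3m}$), so one cannot argue merely that the gap contains $\tfrac{2\pi}{3}$. Instead one must use the explicit lower endpoint $\alpha_n=\tfrac{2\pi}{3}+\tfrac{\pi}{6n-3}>\tfrac{2\pi}{3}$ to see that $[\alpha_n,\beta_n]$ lies entirely in the gap immediately to the right of $\tfrac{2\pi}{3}$, namely $\bigl(\tfrac{2m\pi}{3m},\tfrac{(2m+1)\pi}{3m}\bigr)$, and similarly for $S_{n-2}-S_{n-3}$. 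Everything else is routine: the three congruence cases together need only about a dozen one-line fraction comparisons, and no further tools beyond Lemma~\ref{chev-sign} and Proposition~\ref{root}.
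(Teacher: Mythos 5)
Your proposal is correct and is essentially the paper's own argument: both reduce the statement to Lemma \ref{chev-sign} by placing $[\theta_n,\beta_n]\subseteq[\alpha_n,\beta_n]$ strictly inside the gap between two consecutive zeros (expressed as angles) of $S_{n-2}$ and of $S_{n-2}-S_{n-3}$, and then reading off the parity of the index $j$ in each of the three congruence classes. The explicit gaps and parities you list match those in the paper (which writes out only the $S_{n-2}$ case and declares the other ``similar''), so there is nothing further to add.
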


\begin{proof}
By Lemma \ref{sign} we have 
\begin{eqnarray*}
S_{n-2}(y) &=& \prod_{k=1}^{n-2} \left( y-2\cos \frac{k\pi}{n-1} \right), \\ S_{n-2}(y) - S_{n-3}(y) &=& \prod_{l=1}^{n-2} \left( y-2\cos \frac{(2l-1)\pi}{2n-3} \right). 
\end{eqnarray*}
We will verify the sign for $S_{n-2}(2\cos\theta)$. The sign for $S_{n-2}(2\cos\theta) - S_{n-3}(2\cos\theta)$ is verified in a similar way.

If $n \equiv 0 \pmod{3}$, then $ [\theta_n, \beta_n] \subset \left[ \frac{2\pi}{3}-\frac{\pi}{6n+15}, \frac{2\pi}{3} + \frac{\pi}{6n+3} \right] \subset (\frac{2\pi}{3} -\frac{\pi}{3n-3}, \frac{2\pi}{3} + \frac{2\pi}{3n-3}) = (\frac{(\frac{2n}{3}-1)\pi}{n-1}, \frac{\frac{2n}{3} \pi}{n-1})$. Lemma \ref{sign} then implies that $S_{n-2}(2\cos\theta) < 0$. 

If $n \equiv 1 \pmod{3}$, then $ [\theta_n, \beta_n] \subset \left[ \frac{2\pi}{3} + \frac{\pi}{6n-3}, \frac{2\pi}{3} +  \frac{\pi}{2n+1} \right] \subset (\frac{2\pi}{3}, \frac{2\pi}{3} + \frac{\pi}{n-1}) = (\frac{\frac{2n-2}{3}\pi}{n-1}, \frac{\frac{2n+1}{3} \pi}{n-1})$. Hence $S_{n-2}(2\cos\theta) > 0$. 

If $n \equiv 2 \pmod{3}$, then $ [\theta_n, \beta_n] \subset \left[ \frac{2\pi}{3} - \frac{\pi}{2n+5}, \frac{2\pi}{3} -  \frac{\pi}{6n+3} \right]  \subset (\frac{2\pi}{3} - \frac{2\pi}{3n-3}, \frac{2\pi}{3} + \frac{\pi}{3n-3}) = (\frac{\frac{2n-4}{3}\pi}{n-1}, \frac{\frac{2n-1}{3} \pi}{n-1})$. Hence $S_{n-2}(2\cos\theta) > 0$. 
\end{proof}

\begin{lemma} \label{Sn}
The following hold true.

$(1)$ $S_{n}(2\cos\theta) - S_{n-1}(2\cos\theta) \not= 0$ on $[\theta_n, \beta_n)$. Moreover $S_{n}(2\cos\theta) - S_{n-1}(2\cos\theta) = 0$ at $\theta = \beta_n$. 

$(2)$ $S_{n}(2\cos\theta) + S_{n-1}(2\cos\theta) \not= 0$ on $[\theta_n, \beta_n]$. 
\end{lemma}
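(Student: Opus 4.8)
The plan is to reduce both statements to an explicit trigonometric description of $S_n\pm S_{n-1}$ and then to locate $\theta_n$ among the resulting roots. Using $S_k(2\cos\theta)=\frac{\sin((k+1)\theta)}{\sin\theta}$ for $\theta\notin\pi\BZ$, which follows from the closed form $S_k(y)=(t^{k+1}-t^{-k-1})/(t-t^{-1})$ recalled in Section \ref{rep} by putting $t=e^{i\theta}$, the product-to-sum identities give
\[
S_n(2\cos\theta)-S_{n-1}(2\cos\theta)=\frac{\cos\frac{(2n+1)\theta}{2}}{\cos\frac{\theta}{2}},\qquad
S_n(2\cos\theta)+S_{n-1}(2\cos\theta)=\frac{\sin\frac{(2n+1)\theta}{2}}{\sin\frac{\theta}{2}}.
\]
On $(0,\pi)$, which contains $[\theta_n,\beta_n]$, both denominators are positive, so the zero set of $S_n(2\cos\theta)-S_{n-1}(2\cos\theta)$ is $\{\frac{(2j-1)\pi}{2n+1}:1\le j\le n\}$ and that of $S_n(2\cos\theta)+S_{n-1}(2\cos\theta)$ is $\{\frac{2j\pi}{2n+1}:1\le j\le n\}$; both families have spacing $\frac{2\pi}{2n+1}$ and are disjoint. (The same can be read off from Lemma \ref{chev-sign}(2) and its companion $S_n(y)+S_{n-1}(y)=\prod_{j=1}^n(y-2\cos\frac{2j\pi}{2n+1})$.)

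For part $(1)$ I would first verify, case by case on $n\bmod 3$, that $\beta_n$ is an \emph{odd} multiple of $\frac{\pi}{2n+1}$: clearing denominators one finds $\beta_n=\frac{(4s+1)\pi}{2n+1}$ when $n=3s$, and $\beta_n=\frac{(4s+3)\pi}{2n+1}$ when $n=3s+1$ or $n=3s+2$. Hence $\beta_n$ lies in the zero set of $S_n(2\cos\theta)-S_{n-1}(2\cos\theta)$, which is exactly the ``moreover'' assertion. For non-vanishing on $[\theta_n,\beta_n)$ it then suffices that the zero immediately below $\beta_n$, namely $\beta_n-\frac{2\pi}{2n+1}$, be strictly smaller than $\theta_n$; since $\alpha_n\le\theta_n<\gamma_n<\beta_n$ by Proposition \ref{root}, this follows from the elementary estimate $\beta_n-\alpha_n<\frac{2\pi}{2n+1}$, checked in each of the three cases. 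Then $[\theta_n,\beta_n)\subset(\beta_n-\frac{2\pi}{2n+1},\beta_n)$ contains no zero.

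For part $(2)$ the same information is enough. Because $\beta_n$ is an odd multiple of $\frac{\pi}{2n+1}$, it is not a zero of $S_n(2\cos\theta)+S_{n-1}(2\cos\theta)$, and the largest zero of this function below $\beta_n$ is $\beta_n-\frac{\pi}{2n+1}$. So I would show $\theta_n>\beta_n-\frac{\pi}{2n+1}$, which again follows from $\theta_n\ge\alpha_n$ together with the slightly tighter estimate $\beta_n-\alpha_n<\frac{\pi}{2n+1}$, verified case by case; combined with $\theta_n<\beta_n$ this gives $[\theta_n,\beta_n]\subset(\beta_n-\frac{\pi}{2n+1},\beta_n]$, which contains no zero.

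The only substantive computations are those of the last two paragraphs: rewriting $\beta_n$ with denominator $2n+1$ in each residue class, and the two gap inequalities $\beta_n-\alpha_n<\frac{2\pi}{2n+1}$ and $\beta_n-\alpha_n<\frac{\pi}{2n+1}$, which after clearing denominators become linear inequalities in $n$ holding for all $n\ge 3$. I do not expect a conceptual obstacle; the one point requiring care is the asymmetry between the two parts — in $(1)$ the endpoint $\beta_n$ is itself a zero (so the interval of non-vanishing must be half-open), whereas in $(2)$ $\beta_n$ sits strictly between consecutive zeros of $S_n+S_{n-1}$.
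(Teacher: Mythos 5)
Your proof is correct, and it is essentially the paper's intended argument: the paper omits the proof as "similar to Lemma \ref{sign}," meaning one locates $[\theta_n,\beta_n]$ relative to the roots $2\cos\frac{(2j-1)\pi}{2n+1}$ of $S_n-S_{n-1}$ and $2\cos\frac{2j\pi}{2n+1}$ of $S_n+S_{n-1}$, exactly as you do (your quotient formulas $\cos\frac{(2n+1)\theta}{2}/\cos\frac{\theta}{2}$ and $\sin\frac{(2n+1)\theta}{2}/\sin\frac{\theta}{2}$ encode the same root data as the product expansions). Your observation that $\beta_n$ is an odd multiple of $\frac{\pi}{2n+1}$ in all three residue classes, together with the gap estimate $\beta_n-\alpha_n<\frac{\pi}{2n+1}$, cleanly handles both parts.
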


\begin{proof}
The proof is similar to that of Lemma \ref{sign} and hence is omitted.
\end{proof}

\no{
\begin{proof}
$(1)$ Note that $S_{n}(y) - S_{n-1}(y) = \prod_{j=1}^{n} (y-2\cos \frac{(2j-1)\pi}{2n+1})$.

If $n \equiv 0 \pmod{3}$, then since $[\theta_n, b_n) \subset (\frac{2\pi}{3}- \frac{5\pi}{6n+3}, \frac{2\pi}{3} +   \frac{\pi}{6n+3}) = (\frac{(4n/3-1)\pi}{2n+1}, \frac{(4n/3+1)\pi}{2n+1})$ we have $S_{n}(2\cos\theta) - S_{n-1}(2\cos\theta) > 0$. 

If $n \equiv 1 \pmod{3}$, then since  $[\theta_n, b_n) \subset  (\frac{2\pi}{3} - \frac{\pi}{2n+1}, \frac{2\pi}{3} +  \frac{\pi}{2n+1})= (\frac{(\frac{4(n+2)}{3}-3)\pi}{2n+1}, \frac{(\frac{4(n+2)}{3}-1)\pi}{2n+1})$ we have $S_{n}(2\cos\theta) - S_{n-1}(2\cos\theta) < 0$. 

If $n \equiv 2 \pmod{3}$, then since  $[\theta_n, b_n) \subset  (\frac{2\pi}{3} - \frac{7\pi}{6n+3}, \frac{2\pi}{3} -  \frac{\pi}{6n+3})= \frac{(\frac{4(n+1)}{3}-3)\pi}{2n+1}, \frac{(\frac{4(n+1)}{3}-1)\pi}{2n+1})$ we have $S_{n}(2\cos\theta) - S_{n-1}(2\cos\theta) < 0$. 

$(2)$ Note that $S_{n}(y) + S_{n-1}(y) = \prod_{j=1}^{n} (y-2\cos \frac{(2j)\pi}{2n+1})$.

If $n \equiv 0 \pmod{3}$, then $[\theta_n, b_n) \subset (\frac{2\pi}{3}- \frac{2\pi}{6n+3}, \frac{2\pi}{3} +   \frac{4\pi}{6n+3}) = (\frac{(4n/3)\pi}{2n+1}, \frac{(4n/3+2)\pi}{2n+1})$. 

If $n \equiv 1 \pmod{3}$, then $[\theta_n, b_n) \subset  (\frac{2\pi}{3}, \frac{2\pi}{3} +  \frac{2\pi}{2n+1})= (\frac{(\frac{4(n+2)}{3}-2)\pi}{2n+1}, \frac{(\frac{4(n+2)}{3})\pi}{2n+1})$. 

If $n \equiv 2 \pmod{3}$, then $[\theta_n, b_n) \subset  (\frac{2\pi}{3} - \frac{4\pi}{6n+3}, \frac{2\pi}{3} + \frac{2\pi}{6n+3})= \frac{(\frac{4(n+1)}{3}-2)\pi}{2n+1}, \frac{(\frac{4(n+1)}{3})\pi}{2n+1})$. 
\end{proof}
}

We end this section by  the following lemma which will be used in Section \ref{long}.

\begin{lemma} \label{lem}
Suppose $n \ge 3$ and $n \not= 4$. Let 
$$
G(\theta) := S^2_{n-1}(y) + ( S_{n-1}(y) - S_{n-2}(y) ) S^3_{n-2}(y)
$$ 
for $\theta \in [\theta_n, \beta_n]$, where $y=2\cos\theta$. Then $G < 0$ on $[\theta_n, \beta_n]$. 
\end{lemma}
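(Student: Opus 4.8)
The plan is to substitute $y=2\cos\theta$ and use the closed forms $S_k(2\cos\theta)=\sin((k{+}1)\theta)/\sin\theta$ and $S_{n-1}(y)-S_{n-2}(y)=\cos\tfrac{(2n-1)\theta}{2}/\cos\tfrac{\theta}{2}$, which are valid with nonvanishing denominators on $[\theta_n,\beta_n]$ since $\theta$ lies near $\tfrac{2\pi}{3}$ there. As $\sin\theta>0$ and $\cos\tfrac{\theta}{2}>0$, the inequality $G<0$ is equivalent to
$$
\sin^2(n\theta)\,\sin\theta\cos\tfrac{\theta}{2}+\cos\tfrac{(2n-1)\theta}{2}\,\sin^3((n-1)\theta)<0 .
$$
First I would fix signs: by Lemma \ref{sign}, $S_{n-2}(y)<0$ iff $n\equiv 0\pmod 3$; and locating $[\theta_n,\beta_n]$ inside the interval between consecutive zeros of $S_{n-1}-S_{n-2}$ (via Lemma \ref{chev-sign}(2) and the explicit $\alpha_n,\beta_n$, exactly as in the proof of Lemma \ref{sign}) shows $S_{n-1}(y)-S_{n-2}(y)>0$ iff $n\equiv 0$. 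In all cases $(S_{n-1}-S_{n-2})S_{n-2}^3<0$ on $[\theta_n,\beta_n]$, so the statement is equivalent to
$$
S_{n-1}(y)^2<R(\theta):=\bigl|S_{n-1}(y)-S_{n-2}(y)\bigr|\cdot|S_{n-2}(y)|^3\qquad(>0)
$$
on $[\theta_n,\beta_n]$.

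Let $\eta_n$ be the unique zero of $S_{n-1}(2\cos\theta)$ in $[\alpha_n,\beta_n]$; explicitly $\eta_n=\tfrac{2\pi}{3}$ for $n\equiv 0$, $\eta_n=\tfrac{2\pi}{3}+\tfrac{\pi}{3n}$ for $n\equiv 1$, $\eta_n=\tfrac{2\pi}{3}-\tfrac{\pi}{3n}$ for $n\equiv 2$, and in each case a short identity gives $|S_{n-2}(2\cos\eta_n)|=1$. On $[\eta_n,\beta_n]$ I would show $R\ge 1$ (since $|S_{n-1}-S_{n-2}|=|\cos\tfrac{(2n-1)\theta}{2}|/\cos\tfrac{\theta}{2}$ equals $1$ at $\eta_n$ and is increasing there, while $|\sin((n-1)\theta)|\ge\sin\theta$ forces $|S_{n-2}|\ge 1$) and $S_{n-1}^2<1$ (since $|\sin(n\theta)|<\sin\theta$), hence $G<0$ on $[\eta_n,\beta_n]$. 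On the complementary piece $[\theta_n,\eta_n]$ — nonempty precisely when $\theta_n<\eta_n$, which always holds for $n\equiv 0,1$ and which may be assumed for $n\equiv 2$ (otherwise $[\theta_n,\beta_n]\subseteq[\eta_n,\beta_n]$) — the arcs swept by $n\theta$, $(n-1)\theta$, $\tfrac{(2n-1)\theta}{2}$, $\theta$ each lie in one monotone branch, from which $S_{n-1}^2$ is decreasing and $R$ is increasing on $[\theta_n,\eta_n]$; so it only remains to check $G(\theta_n)<0$.

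For that I would invoke the defining equation $F(\theta_n)=0$. Writing each $\cos\tfrac{(2k+1)\theta}{2}$ as $(S_k-S_{k-1})\cos\tfrac{\theta}{2}$ and using Lemma \ref{power}, $F(\theta_n)=0$ becomes the linear relation $(y-1)(y^2-3)\,S_{n-1}(y)=(y^2-y-1)\,S_{n-2}(y)$ with $y=2\cos\theta_n$. Setting $\lambda=\tfrac{y^2-y-1}{(y-1)(y^2-3)}$, so $S_{n-1}=\lambda S_{n-2}$, and inserting this into the identity $S_{n-1}^2+S_{n-2}^2-yS_{n-1}S_{n-2}=1$ of Lemma \ref{chev} gives $S_{n-2}^2=\tfrac{1}{\lambda^2-y\lambda+1}$ and hence
$$
G(\theta_n)=\frac{\lambda^4-y\lambda^3+\lambda^2+\lambda-1}{(\lambda^2-y\lambda+1)^2},
$$
so $G(\theta_n)<0\iff \lambda^4-y\lambda^3+\lambda^2+\lambda-1<0$. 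The left side, a function of $y$ alone, is negative on an interval of $y$ that contains a neighbourhood of $y=-1$ but not an interval slightly below $-1$. For $n\equiv 0$ or $n\equiv 2$ one always has $\theta_n<\tfrac{2\pi}{3}$, hence $y>-1$, which is on the good side; for $n\equiv 1$ one has $\theta_n>\tfrac{2\pi}{3}$, hence $y<-1$, and one must bound $\theta_n$ away from $\tfrac{2\pi}{3}$ — using $\theta_n<\gamma_n$ and the explicit $\gamma_n$, with a sharper estimate of $\theta_n$ needed only for $n=7$ — to keep $y$ on the good side; this succeeds for $n\ge 7$ but fails for $n=4$, where $\theta_4$ lies so close to $\alpha_4$ (a zero of $S_3-S_2$, at which $G=S_3^2>0$) that $G(\theta_4)>0$. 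This is exactly why $n=4$ is excluded.

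The main obstacle is the uniformity of this last step over all $n$: away from $\theta_n$ everything is driven by monotonicity and sign information, so the inequality is pinned to the single value $G(\theta_n)$, which is accessible only through the exact consequence $(y-1)(y^2-3)S_{n-1}=(y^2-y-1)S_{n-2}$ of $F(\theta_n)=0$. Turning that into a rigorous polynomial inequality in $y=2\cos\theta_n$, and — for the borderline class $n\equiv 1$ — locating $\theta_n$ precisely enough to stay on the good side of it while $n=4$ genuinely lands on the bad side, is the technical heart of the argument, on top of the routine but lengthy case-by-case monotonicity checks.
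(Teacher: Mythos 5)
Your proposal is correct and follows essentially the same route as the paper: your splitting point $\eta_n$ (the zero of $S_{n-1}(2\cos\theta)$ at $\tfrac{2\pi}{3}$, $\tfrac{2\pi}{3}\pm\tfrac{\pi}{3n}$) is exactly the paper's division of $[\theta_n,\beta_n]$, with monotonicity pinning the piece containing $\theta_n$ to the endpoint value and direct trigonometric bounds (your $R\ge 1>S_{n-1}^2$ is a repackaging of the paper's $\sin^2(\cdot)\sin^2(\cdot)-\sin^4(\cdot)<0$ estimates) handling the other piece. The evaluation of $G(\theta_n)$ via $(y-1)(y^2-3)S_{n-1}=(y^2-y-1)S_{n-2}$ and Lemma \ref{chev}, reducing to the sign of a rational function of $y=2\cos\theta_n$ with the delicate case $n\equiv 1\pmod 3$ (sharp bound needed for $n=7$, failure at $n=4$), is likewise the paper's Step 1.
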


\begin{proof}
The proof is divided into two steps.

\underline{Step 1}: We first claim that $G(\theta_n) < 0$. Indeed, at $\theta = \theta_n$ we have  $t^{2n+5}-t^{2n+3}+t^{2n+2}+t^3-t^2+1 =0$ where $t := e^{i \theta}$. Since  $y = 2\cos\theta = t+t^{-1}$, by a direct calculation using $S_{k}(y) = (t^{k+1} - t^{-k-1}) / (t - t^{-1})$ we have 
$$
(y-1)(y^2-3) S_{n-1}(y) -  (y^2-y-1) S_{n-2}(y) = \frac{t^{2n+5}-t^{2n+3}+t^{2n+2}+t^3-t^2+1}{t^{n+2}(t+1)} = 0. 
$$
This implies that $S_{n-1}(y) = \frac{y^2-y-1}{(y-1)(y^2-3)} S_{n-2}(y)$. Combining with $S^2_{n-1}(y) + S^2_{n-2}(y) - y S_{n-1}(y) S_{n-2}(y) =1$ we obtain
$S^2_{n-2}(y) = \frac{(y-1)^2(y^2-3)^2}{(2-y)(y^3 -y^2 -4y +5)}$. Hence
$$
G(\theta_n) = \frac{(y^6-2 y^5-5 y^4+11 y^3+6 y^2-19 y+7)(y^5-y^4-5 y^3+6 y^2+5 y-7)}{(2-y)(y^3-y^2-4 y+5)^2}. 
$$

The real roots of $H_1(y) := y^6-2 y^5-5 y^4+11 y^3+6 y^2-19 y+7$ are approximately $0.50204$ and $1.47949$, so $H_1(y) > 0$ (since $y = 2 \cos\theta_n< 0$).  

The real roots of the polynomial $H_2(y) : = y^5 - y^4 - 5 y^3 + 6 y^2 + 5 y - 7$ are approximately $-1.96757$, $-1.22062$ and  $1.66787$. 

If $n \not\equiv 1 \pmod{3}$, then by Proposition \ref{root} we have $\theta_n < \gamma_n \le \frac{2\pi}{3}$. So $ y > 2 \cos \frac{2\pi}{3}= -1$. 

If $n \equiv 1 \pmod{3}$ then  $\theta_n \in (\alpha_n, \gamma_n) = (  \frac{2\pi}{3} + \frac{\pi}{6n-3}, \frac{2\pi}{3} +  \frac{4\pi}{12n+9} )$. If $n \ge 10$, then $y > 2 \cos (\frac{2\pi}{3} + \frac{4\pi}{129}) \approx -1.16372$. If $n=7$, then by a direct calculation we have $\theta_n \approx 2.20391$ and so $y \approx -1.18332$. 

Since $y \in (-1.22062, 1.66787)$ in all cases, we have $H_2(y) < 0$. 
Hence $G(\theta_n) <0$. 

\underline{Step 2}: We now prove that $G < 0$ on $[\theta_n, \beta_n]$. Since $S_k(y)  = \frac{\sin(k+1)\theta}{\sin  \theta}$ we have 
$$
G(\theta)=  \frac{\sin^2(n\theta) \sin^2\theta + 2 \sin \frac{\theta}{2} \cos ((n - \frac{1}{2})\theta)\sin^3((n-1)\theta)} {\sin^4 \theta}.
$$

Let $\ve = \begin{cases}
			\frac{2\pi}{3} - \theta & \text{if } n \not\equiv 1 \pmod{3}\\
            \theta - \frac{2\pi}{3}& \text{if } n \equiv 1 \pmod{3}
		 \end{cases}
$ and $\widetilde{G}(\ve) :=  G(\theta) \sin^4 \theta$. 

It is equivalent to show that $\widetilde{G}(\ve) < 0$ for $\ve \in [\frac{2\pi}{3} - \beta_n, \frac{2\pi}{3} - \theta_n]$ if $n \not\equiv 1 \pmod{3}$ and $\ve \in [\theta_n - \frac{2\pi}{3}, \beta_n - \frac{2\pi}{3}]$ if $n \equiv 1 \pmod{3}$. There are three cases to consider. 

\no{
\begin{eqnarray*}
&&\begin{cases}
			\sin^2 (n \varepsilon) \sin^2 (\frac{\pi}{3}-\varepsilon) - 2 \sin (\frac{\pi}{3} + \frac{\varepsilon}{2}) \cos (\frac{\pi}{3} - (n - \frac{1}{2})\varepsilon) \sin^3(\frac{\pi}{3}+(n-1)\varepsilon) & \text{if } n \equiv 0 \pmod{3}\\
           \sin^2 (\frac{\pi}{3} - n \varepsilon) \sin^2 (\frac{\pi}{3}-\varepsilon)  - 2 \sin (\frac{\pi}{3} + \frac{\varepsilon}{2})  \sin ((n-\frac{1}{2})\varepsilon - \frac{\pi}{6})\sin^3 (n-1)\varepsilon& \text{if } n \equiv 1 \pmod{3}\\
           \sin^2 (\frac{\pi}{3} + n \varepsilon) \sin^2 (\frac{\pi}{3}-\varepsilon)  - 2 \sin (\frac{\pi}{3}+\frac{\varepsilon}{2}) \cos ((n-\frac{1}{2})\varepsilon) \sin^3 (\frac{2\pi}{3} + (n-1)\varepsilon)& \text{if } n \equiv 2 \pmod{3}
		 \end{cases}.
\end{eqnarray*}
Then $(\sin^4 \theta)  G(\theta) = (\sin^4 \theta) \widetilde{G}(\ve)$. Since $G(\theta_n) < 0$ we have $\widetilde{G}(\theta_n - \frac{2\pi}{3}) < 0$. 

It suffices to show that $\widetilde{G}(\ve) < 0$ for $\ve \in (\theta_n - \frac{2\pi}{3}, \beta_n - \frac{2\pi}{3})$. There are $3$ cases to consider. 
}

Case 1: $n \equiv 0 \pmod{3}$. Then $\ve = \frac{2\pi}{3} - \theta \in [\frac{2\pi}{3} - \beta_n, \frac{2\pi}{3} - \theta_n] \subset \left[-\frac{\pi}{6n+3}, \frac{\pi}{6n+15} \right]$ and 
\begin{eqnarray*}
 \widetilde{G}(\varepsilon) &=& 
 \sin^2(\frac{2n\pi}{3} - n\ve ) \sin^2(\frac{2\pi}{3} - \ve) \\
 && + \, 2 \sin (\frac{\pi}{3} -  \frac{\ve}{2}) \cos ((n - \frac{1}{2}) \frac{2\pi}{3} - (n - \frac{1}{2})\ve) \sin^3((n-1) \frac{2\pi}{3} - (n-1)\ve) \\
 &=&  \sin^2 (n \varepsilon) \sin^2 (\frac{\pi}{3}+\varepsilon) - 2 \sin (\frac{\pi}{3} - \frac{\varepsilon}{2}) \cos (\frac{\pi}{3} + (n - \frac{1}{2})\varepsilon) \sin^3(\frac{\pi}{3}-(n-1)\varepsilon).
\end{eqnarray*}

If $\varepsilon \in [0, \frac{2\pi}{3} - \theta_n] \subset \left[0, \frac{\pi}{6n+15} \right]$ then $\widetilde{G}(\varepsilon)$ is an increasing function, so 
$$
\widetilde{G}(\varepsilon) \le \widetilde{G}(\frac{2\pi}{3} - \theta_n ) = G(\theta_n) \sin^4 \theta_n < 0.
$$ 

If $\varepsilon \in [\frac{2\pi}{3} - \beta_n, 0] = [-\frac{\pi}{6n+3},0]$, then $\sin^2 (n \varepsilon)  = \sin^2 (n |\varepsilon|) \le \sin^2 (\frac{n\pi}{6n+3})$. Hence 
\begin{eqnarray*}
\widetilde{G}(\varepsilon) 
&<& \sin^2 (\frac{n\pi}{6n+3}) \sin^2 (\frac{\pi}{3} - |\varepsilon|) - 2 \sin (\frac{\pi}{3} + \frac{|\varepsilon|}{2}) \cos (\frac{\pi}{3} -(n - \frac{1}{2})|\varepsilon|) \sin^3(\frac{\pi}{3} + (n-1)|\varepsilon|) \\
&<& \sin^2 (\frac{n\pi}{6n+3}) \sin^2 (\frac{\pi}{3} ) - 2 \sin (\frac{\pi}{3} ) \cos (\frac{\pi}{3})\sin^3(\frac{\pi}{3} ) \\
&=& \sin^2 (\frac{n\pi}{6n+3})\sin^2 (\frac{\pi}{3} ) -  \sin^4(\frac{\pi}{3} ) \\ 
&<& 0.
\end{eqnarray*}

\no{
$$
\widetilde{G}(\varepsilon) <  \sin^2 (\frac{\pi}{6}) \sin^2 (\frac{\pi}{3} - \varepsilon) - 2 \sin (\frac{\pi}{3} + \frac{\varepsilon}{2}) \cos (\frac{\pi}{3} - (n - \frac{1}{2})\varepsilon) \sin^3(\frac{\pi}{3}+(n-1)\varepsilon).
$$ 
The latter is a decrasing increasing function in $\varepsilon$. Hence
\begin{eqnarray*}
\widetilde{G}(\varepsilon) &<& \sin^2 (\frac{\pi}{6}) \sin^2 (\frac{\pi}{3} ) - 2 \sin (\frac{\pi}{6} ) \cos (\frac{\pi}{6})\sin^3(\frac{\pi}{3} ) \\
&=& \sin^2 (\frac{\pi}{6}) \sin^2 (\frac{\pi}{3} ) -  \sin^4(\frac{\pi}{3} ) < 0.
\end{eqnarray*}

As $|\varepsilon|$ increases, $\tilde{g}(|\varepsilon|)$ decreases. Hence 
\begin{eqnarray*}
g(|\ve|) &\le& \tilde{g}(0) \\
&=&  \sin^2 (\frac{\pi}{6}) \sin^2 (\frac{\pi}{3} ) - 2 \sin (\frac{\pi}{6} ) \cos (\frac{\pi}{6})\sin^3(\frac{\pi}{3} ) \\
&=& \sin^2 (\frac{\pi}{6}) \sin^2 (\frac{\pi}{3} ) -  \sin^4(\frac{\pi}{3} ) < 0.
\end{eqnarray*}
}

Case 2: $n \equiv 1 \pmod{3}$ and $n \not= 4$. Then $\ve = \theta -\frac{2\pi}{3} \in [\theta_n -  \frac{2\pi}{3}, \beta_n -  \frac{2\pi}{3}] \subset (\frac{\pi}{6n-3}, \frac{\pi}{2n+1}]$ and
\begin{eqnarray*}
 \widetilde{G}(\varepsilon) &=& 
 \sin^2(\frac{2n\pi}{3} + n\ve ) \sin^2(\frac{2\pi}{3} + \ve) \\
 && + \, 2 \sin (\frac{\pi}{3} +  \frac{\ve}{2}) \cos ((n - \frac{1}{2}) \frac{2\pi}{3} + (n - \frac{1}{2})\ve) \sin^3((n-1) \frac{2\pi}{3} +(n-1)\ve) \\
 &=&  \sin^2 (\frac{\pi}{3} - n \varepsilon) \sin^2 (\frac{\pi}{3}-\varepsilon)  + 2 \sin (\frac{\pi}{3} + \frac{\varepsilon}{2}) \cos ( \frac{\pi}{3} + (n-\frac{1}{2})\varepsilon)\sin^3 (n-1)\varepsilon \\
 &=&  \sin^2 (\frac{\pi}{3} - n \varepsilon) \sin^2 (\frac{\pi}{3}-\varepsilon)  - 2 \cos (\frac{\pi}{6} -\frac{\varepsilon}{2}) \sin  ((n-\frac{1}{2})\varepsilon - \frac{\pi}{6} )\sin^3 (n-1)\varepsilon.
\end{eqnarray*}

If $\varepsilon \in [\theta_n -  \frac{2\pi}{3}, \frac{\pi}{3n}] \subset (\frac{\pi}{6n-3}, \frac{\pi}{3n}]$ then $\widetilde{G}(\varepsilon)$ is a decreasing function, so 
$$
\widetilde{G}(\varepsilon) \le \widetilde{G}(\theta_n -\frac{2\pi}{3} ) = G(\theta_n) \sin^4 \theta_n < 0.
$$ 

If  $\varepsilon \in [\frac{\pi}{3n}, \beta_n -  \frac{2\pi}{3}] \subset [\frac{\pi}{3n},  \frac{\pi}{2n+1}]$ then $\sin^2 (n \varepsilon - \frac{\pi}{3}) \le  \sin^2 (\frac{n\pi}{2n+1}- \frac{\pi}{3}) = \sin^2 ( \frac{\pi}{6} - \frac{\pi}{4n+2})$. Hence
\begin{eqnarray*}
\widetilde{G}(\varepsilon) &\le& \sin^2 ( \frac{\pi}{6} - \frac{\pi}{4n+2})\sin^2 (\frac{\pi}{3}-\varepsilon)  - 2 \cos (\frac{\pi}{6} -\frac{\varepsilon}{2}) \sin  ((n-\frac{1}{2})\varepsilon - \frac{\pi}{6} )\sin^3 (n-1)\varepsilon \\
&\le& \sin^2 ( \frac{\pi}{6} - \frac{\pi}{4n+2}) \sin^2 (\frac{\pi}{3}-\frac{\pi}{3n})  - 2 \cos (\frac{\pi}{6}-\frac{\pi}{6n}) \sin (\frac{(n-\frac{1}{2})\pi}{3n} - \frac{\pi}{6})\sin^3 \frac{(n-1)\pi}{3n} \\
&=& \sin^2 ( \frac{\pi}{6} - \frac{\pi}{4n+2})\sin^2 (\frac{\pi}{3}-\frac{\pi}{3n}) - \sin^4 (\frac{\pi}{3}-\frac{\pi}{3n}) \\
&<& 0.
\end{eqnarray*}

Case 3: $n \equiv 2 \pmod{3}$. Then $\ve = \frac{2\pi}{3} - \theta \in [\frac{2\pi}{3} - \beta_n, \frac{2\pi}{3} - \theta_n] \subset \left[\frac{\pi}{6n+3}, \frac{\pi}{2n+5} \right]$ and 
\begin{eqnarray*}
 \widetilde{G}(\varepsilon) &=& 
 \sin^2(\frac{2n\pi}{3} - n\ve ) \sin^2(\frac{2\pi}{3} - \ve) \\
 && + \, 2 \sin (\frac{\pi}{3} -  \frac{\ve}{2}) \cos ((n - \frac{1}{2}) \frac{2\pi}{3} - (n - \frac{1}{2})\ve) \sin^3((n-1) \frac{2\pi}{3} - (n-1)\ve) \\
 &=&  \sin^2 (n \varepsilon - \frac{\pi}{3} ) \sin^2 (\frac{\pi}{3}+\varepsilon)  - 2 \sin (\frac{\pi}{3}-\frac{\varepsilon}{2}) \cos ((n-1/2)\varepsilon) \sin^3 (\frac{2\pi}{3} - (n-1)\varepsilon).
\end{eqnarray*}

If $\varepsilon \in [\frac{\pi}{3n}, \frac{2\pi}{3} - \theta_n] \subset [\frac{\pi}{3n}, \frac{\pi}{2n+5}] $ then $\widetilde{G}(\varepsilon)$ is an increasing function, so 
$$
\widetilde{G}(\varepsilon) \le \widetilde{G}(\frac{2\pi}{3} - \theta_n ) = G(\theta_n) \sin^4 \theta_n < 0.
$$ 

If $\varepsilon \in [\frac{2\pi}{3} - \beta_n, \frac{\pi}{3n}] = [\frac{\pi}{6n+3}, \frac{\pi}{3n}] $ then $\frac{2\pi}{3} - (n-1)\varepsilon \in [\frac{\pi}{3} + \frac{\pi}{3n}, \frac{\pi}{2} + \frac{\pi}{4n+2}]$, so 
\begin{eqnarray*}
\sin(\frac{2\pi}{3} - (n-1)\varepsilon) &\ge& \min \left( \sin (\frac{\pi}{3} + \frac{\pi}{3n}), \sin(\frac{\pi}{2} + \frac{\pi}{4n+2}) \right) \\
&=& \min \left( \sin (\frac{\pi}{3} + \frac{\pi}{3n}), \sin(\frac{\pi}{2} - \frac{\pi}{4n+2}) \right) \\
&=& \sin (\frac{\pi}{3} + \frac{\pi}{3n}).
\end{eqnarray*}
Note that $\sin^2 (\frac{\pi}{3} - n \varepsilon) \le \sin^2 (\frac{\pi}{3} - \frac{n\pi}{6n+3})$. Hence 
\begin{eqnarray*}
\widetilde{G}(\varepsilon)  &\le & \sin^2 (\frac{\pi}{3} - \frac{n\pi}{6n+3}) \sin^2 (\frac{\pi}{3}+\varepsilon)  - 2 \sin (\frac{\pi}{3}-\frac{\varepsilon}{2}) \cos ((n-1/2)\varepsilon) \sin^3 (\frac{\pi}{3} + \frac{\pi}{3n}) \\
&\le&  \sin^2 (\frac{\pi}{3} - \frac{n\pi}{6n+3})\sin^2 (\frac{\pi}{3}+\frac{\pi}{3n}) - 2 \sin (\frac{\pi}{3}-\frac{\pi}{6n}) \cos (\frac{\pi}{3}-\frac{\pi}{6n}) \sin^3 (\frac{\pi}{3} + \frac{\pi}{3n}) \\
&=&  \sin^2 (\frac{\pi}{3} - \frac{n\pi}{6n+3})\sin^2 (\frac{\pi}{3}+\frac{\pi}{3n}) -  \sin (\frac{2\pi}{3}-\frac{\pi}{3n})  \sin^3 (\frac{\pi}{3} + \frac{\pi}{3n}) \\
&=&   \sin^2 (\frac{\pi}{3} - \frac{n\pi}{6n+3}) \sin^2 (\frac{\pi}{3}+\frac{\pi}{3n}) -\sin^4 (\frac{\pi}{3} + \frac{\pi}{3n}) \\
&<& 0.
\end{eqnarray*}
This completes the proof of Lemma \ref{lem}.
\end{proof}

\begin{remark}
Lemma \ref{lem} does not hold true for $n=4$. Indeed, if $n=4$ then $\theta_n \approx 2.2728$ and $\beta_n \approx 2.44346$. At $\theta = \theta_n$, we have $y \approx -1.2915$ and so $G(\theta_n) = y^9-y^8-5 y^7+5 y^6+9 y^5-10 y^4-7 y^3+8 y^2+2 y-1 \approx 0.112614$. 
However, at $\theta = 2.3 \in (\theta_n, \beta_n)$ we have $y \approx -1.33255$ and $G(\theta) \approx -0.133199$. Hence the sign of $G$ changes on $[\theta_n, \beta_n]$.
\end{remark}

\no{
\begin{proof}
$(1)$ Note that $S_{n}(y) - S_{n-1}(y) = \prod_{j=1}^{n} (y-2\cos \frac{(2j-1)\pi}{2n+1})$.

If $n \equiv 0 \pmod{3}$, then since $[\theta_n, \beta_n) \subset (\frac{2\pi}{3}- \frac{5\pi}{6n+3}, \frac{2\pi}{3} +   \frac{\pi}{6n+3}) = (\frac{(4n/3-1)\pi}{2n+1}, \frac{(4n/3+1)\pi}{2n+1})$ we have $S_{n}(2\cos\theta) - S_{n-1}(2\cos\theta) > 0$. 

If $n \equiv 1 \pmod{3}$, then since  $[\theta_n, \beta_n) \subset  (\frac{2\pi}{3} - \frac{\pi}{2n+1}, \frac{2\pi}{3} +  \frac{\pi}{2n+1})= (\frac{(\frac{4(n+2)}{3}-3)\pi}{2n+1}, \frac{(\frac{4(n+2)}{3}-1)\pi}{2n+1})$ we have $S_{n}(2\cos\theta) - S_{n-1}(2\cos\theta) < 0$. 

If $n \equiv 2 \pmod{3}$, then since  $[\theta_n, \beta_n) \subset  (\frac{2\pi}{3} - \frac{7\pi}{6n+3}, \frac{2\pi}{3} -  \frac{\pi}{6n+3})= \frac{(\frac{4(n+1)}{3}-3)\pi}{2n+1}, \frac{(\frac{4(n+1)}{3}-1)\pi}{2n+1})$ we have $S_{n}(2\cos\theta) - S_{n-1}(2\cos\theta) < 0$. 

$(2)$ Note that $S_{n}(y) + S_{n-1}(y) = \prod_{j=1}^{n} (y-2\cos \frac{(2j)\pi}{2n+1})$.

If $n \equiv 0 \pmod{3}$, then $[\theta_n, \beta_n) \subset (\frac{2\pi}{3}- \frac{2\pi}{6n+3}, \frac{2\pi}{3} +   \frac{4\pi}{6n+3}) = (\frac{(4n/3)\pi}{2n+1}, \frac{(4n/3+2)\pi}{2n+1})$. 

If $n \equiv 1 \pmod{3}$, then $[\theta_n, \beta_n) \subset  (\frac{2\pi}{3}, \frac{2\pi}{3} +  \frac{2\pi}{2n+1})= (\frac{(\frac{4(n+2)}{3}-2)\pi}{2n+1}, \frac{(\frac{4(n+2)}{3})\pi}{2n+1})$. 

If $n \equiv 2 \pmod{3}$, then $[\theta_n, \beta_n) \subset  (\frac{2\pi}{3} - \frac{4\pi}{6n+3}, \frac{2\pi}{3} + \frac{2\pi}{6n+3})= \frac{(\frac{4(n+1)}{3}-2)\pi}{2n+1}, \frac{(\frac{4(n+1)}{3})\pi}{2n+1})$. 
\end{proof}
}

\section{Continuous paths of $\mathrm{SL}_2(\BR)$-representations} \label{path}

For $\theta \in [\theta_n, \beta_n]$ we let $y := 2 \cos \theta$ and 
$$
r: = \begin{cases}
	    (S_{n-2}(y) - \sqrt{S^2_{n-2}(y) + 4})/2 & \text{if } n \equiv 0 \pmod{3}, \\
            (S_{n-2}(y) +  \sqrt{S^2_{n-2}(y) + 4})/2 & \text{if } n \not\equiv 0 \pmod{3}.
	\end{cases}
$$
Then $r - r^{-1} = S_{n-2}(y) \not= 0$. Moreover, by Proposition \ref{sign} we have  $r < -1$ if $n \equiv 0 \pmod{3}$, and $r > 1$ if $n \not\equiv 0 \pmod{3}$. Hence $|r| > 1$. 

Let $z_{\pm} =  \pm \sqrt{1+ r (S_{n-2}(y) - S_{n-3}(y))}$ and $x_{\pm} = z_{\pm}  (y+1 - \frac{S_{n-3}(y) + r}{S_{n-2}(y)} )$. By Proposition \ref{sol} we have  $Q=R_n=0$. Hence there exists a representation $\rho_{\pm}: G(\CK_n) \to \mathrm{SL}_2(\BC)$ such that $\tr \rho_{\pm}(a) = x_{\pm}$, $\tr \rho_{\pm}(w) = y$ and $\tr \rho_{\pm}(aw) = z_{\pm}$. Note that $|z|>1$.

\begin{proposition} \label{x}
$x_{\pm}= \mp 2$ at $\theta = \beta_n$.
\end{proposition}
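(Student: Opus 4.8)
The plan is to substitute $\theta=\beta_n$ into the explicit formulas for $r$, $z_\pm$ and $x_\pm$ given just above and evaluate everything in closed form. The one external input is Lemma~\ref{Sn}(1), which gives $S_n(y)=S_{n-1}(y)$ at $\theta=\beta_n$, where $y=2\cos\beta_n$; directly from the definition of $\beta_n$ one has $\beta_n\in(\pi/2,\pi)$ for all $n\ge 3$, so $y\in(-2,0)$, and in particular $y-1<0$ and $2-y>1$.

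First I would turn $S_n=S_{n-1}$ into a formula for $S_{n-2}(y)$. The Chebyshev recurrence $S_n=yS_{n-1}-S_{n-2}$ converts it into $(y-1)S_{n-1}(y)=S_{n-2}(y)$, and then $S_{n-3}=yS_{n-2}-S_{n-1}$ gives $S_{n-3}(y)=\frac{y^2-y-1}{y-1}S_{n-2}(y)$. Substituting both into the identity $S_{n-2}^2+S_{n-3}^2-yS_{n-2}S_{n-3}=1$ of Lemma~\ref{chev}, together with the elementary factorization $(y-1)^2+(y^2-y-1)^2-y(y-1)(y^2-y-1)=2-y$, yields the key closed form $S_{n-2}(y)^2=(y-1)^2/(2-y)$.

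Next I would pin down $r$. By construction $r$ is a root of $t^2-S_{n-2}(y)\,t-1=0$ with $|r|>1$. A short computation using $S_{n-2}(y)^2=(y-1)^2/(2-y)$ shows that $t_0:=(1-y)/S_{n-2}(y)$ is also a root of this quadratic and that $|t_0|^2=2-y>1$; since the product of the two roots is $-1$, the companion root has modulus $<1$, and therefore $r=t_0$. Hence $S_{n-2}(y)\,r=1-y$ and $r^2=S_{n-2}(y)\,r+1=2-y$ at $\theta=\beta_n$.

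Finally I would substitute these back into the formulas for $z_\pm$ and $x_\pm$. One computes $S_{n-2}-S_{n-3}=\frac{y(2-y)}{y-1}S_{n-2}$, so $r(S_{n-2}-S_{n-3})=\frac{y(2-y)(1-y)}{y-1}=y^2-2y$, hence $z_\pm^2=1+r(S_{n-2}-S_{n-3})=(y-1)^2$; because $y-1<0$ this forces $z_+=1-y$ and $z_-=y-1$. Using $\frac{r}{S_{n-2}}=\frac{r^2}{S_{n-2}r}=\frac{2-y}{1-y}$ one gets $y+1-\frac{S_{n-3}+r}{S_{n-2}}=y+1-\frac{y^2-3}{y-1}=\frac{2}{y-1}$, so $x_\pm=z_\pm\cdot\frac{2}{y-1}$, giving $x_+=-2$ and $x_-=2$, i.e. $x_\pm=\mp 2$. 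I do not expect a genuine obstacle here; the only delicate point is the sign bookkeeping — choosing the correct square roots for $z_\pm$ (this is where $y-1<0$ is used) and checking that the modulus-$>1$ root of $t^2-S_{n-2}t-1$ is $t_0=(1-y)/S_{n-2}(y)$ and not its companion $-S_{n-2}(y)/(1-y)$ — and this is settled by the inequalities $y<0$ and $2-y>1$ that hold at $\theta=\beta_n$.
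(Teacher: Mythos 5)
Your proof is correct and follows essentially the same route as the paper: use Lemma \ref{Sn} to get $S_{n-1}(y)=\frac{1}{y-1}S_{n-2}(y)$ and $S_{n-3}(y)=\frac{y^2-y-1}{y-1}S_{n-2}(y)$, deduce $S_{n-2}^2(y)=\frac{(y-1)^2}{2-y}$ from the Chebyshev identity, identify $r$, and then evaluate $z_\pm$ and $x_\pm$. The only (harmless) difference is that you pin down $r=(1-y)/S_{n-2}(y)$ via the condition $|r|>1$ and the fact that the two roots of $t^2-S_{n-2}(y)t-1$ have product $-1$, whereas the paper splits into cases according to the sign of $S_{n-2}(y)$ from Lemma \ref{sign} and computes $r=\mp\sqrt{2-y}$ explicitly.
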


\begin{proof}
At $\theta =\beta_n$, by Lemma \ref{Sn} we have $S_n(y) - S_{n-1}(y)=0$. Since $S_{n}(y) - S_{n-1}(y)= (y-1) S_{n-1}(y) - S_{n-2}(y)$ we get $S_{n-1}(y) = \frac{1}{y-1} S_{n-2}(y)$. Then $S_{n-3}(y) =  y S_{n-2}(y) - S_{n-1}(y) = \frac{y^2-y-1}{y-1}S_{n-2}(y)$. Since $S^2_{n-2}(y) + S^2_{n-3}(y) - y S_{n-2}(y) S_{n-3}(y) =1$ we obtain
$$
\frac{2-y}{(1-y)^2}S^2_{n-2}(y) =1. 
$$

If $n \equiv 0 \pmod{3}$, then $S_{n-2}(y) < 0$. Hence $S_{n-2}(y) = \frac{y-1}{\sqrt{2-y}}$ and $S_{n-3}(y) =  \frac{y^2-y-1}{\sqrt{2-y}}$, so $r = (S_{n-2}(y) - \sqrt{S^2_{n-2}(y) + 4})/2 = - \sqrt{2-y}$.

If $n \not\equiv 0 \pmod{3}$, then $S_{n-2}(y) > 0$. Hence $S_{n-2}(y) = \frac{1-y}{\sqrt{2-y}}$ and $S_{n-3}(y) = \frac{1+y-y^2}{\sqrt{2-y}}$, so $r = (S_{n-2}(y) +\sqrt{S^2_{n-2}(y) + 4})/2 = \sqrt{2-y}$.

In both cases we have $z_{\pm} =  \pm \sqrt{1+ r (S_{n-2}(y) - S_{n-3}(y))} = \pm (1-y)$ and $x_{\pm} = z_{\pm} (y+1 - \frac{S_{n-3}(y) + u}{S_{n-2}(y)} ) = \pm (1-y) \frac{2}{y-1} = \mp 2$.
\end{proof}

Let $s :=  x^2+y^2+z^2-xyz-4$. Note that $Q=x-xy+ (s +1)z=0$. 

\begin{proposition}  \label{s}
$s > 0$ on $(\theta_n, \beta_n]$. Moreover $s=0$ at $\theta = \theta_n$.
\end{proposition}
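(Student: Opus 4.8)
The plan is to first convert $s$ into an explicit one‑variable expression and then locate its zeros via the Alexander polynomial. Since $Q=R_n=0$ for our $x_\pm,y,z_\pm$ by Proposition~\ref{sol}, the relation $Q=x(1-y)+(s+1)z=0$ holds, and as $|z|>1$ we may divide by $z$ to get $s+1=(y-1)\,x/z$. Substituting $x/z=y+1-\frac{S_{n-3}(y)+r}{S_{n-2}(y)}$ and using the Chebyshev recursions $S_{n-3}(y)=yS_{n-2}(y)-S_{n-1}(y)$, $S_n(y)-S_{n-1}(y)=(y-1)S_{n-1}(y)-S_{n-2}(y)$ together with $S_{n-2}(y)=r-r^{-1}$, I expect to arrive at
$$
s=\frac{N}{r\,S_{n-2}(y)},\qquad N:=r\bigl(S_n(y)-S_{n-1}(y)\bigr)-(y-1).
$$
By Lemma~\ref{sign} and the definition of $r$ we have $r<-1$, $S_{n-2}(y)<0$ when $n\equiv 0\pmod 3$, and $r>1$, $S_{n-2}(y)>0$ otherwise, so $r\,S_{n-2}(y)>0$ on all of $[\theta_n,\beta_n]$; hence it suffices to prove $N=0$ at $\theta_n$ and $N>0$ on $(\theta_n,\beta_n]$.

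The key device is the ``conjugate'' quantity $N':=-r^{-1}\bigl(S_n(y)-S_{n-1}(y)\bigr)-(y-1)$, obtained by replacing $r$ by the other root $-r^{-1}$ of $X^2-S_{n-2}(y)X-1=0$. Using $r\cdot(-r^{-1})=-1$, $r+(-r^{-1})=S_{n-2}(y)$ and $S_{n-1}^2(y)+S_{n-2}^2(y)-yS_{n-1}(y)S_{n-2}(y)=1$, a direct computation (symmetric in the two roots, hence polynomial in $y$) should give
$$
NN'=-S_{n-2}(y)\bigl[(y-1)(y^2-3)S_{n-1}(y)-(y^2-y-1)S_{n-2}(y)\bigr]=-S_{n-2}(y)\,\frac{\Delta_{\CK_n}(t)}{t^{n+2}},
$$
where $t=e^{i\theta}$, $y=t+t^{-1}$, the last equality being the identity recorded in Step~1 of the proof of Lemma~\ref{lem}. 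Since $\Delta_{\CK_n}(e^{i\theta})=0$ is equivalent to $F(\theta)=0$ (as shown at the start of Section~\ref{Alex}), Proposition~\ref{root} gives $\Delta_{\CK_n}(e^{i\theta})\neq 0$ for $\theta\in(\theta_n,\beta_n]$, while $S_{n-2}(y)\neq 0$ there by Lemma~\ref{sign}; hence $NN'\neq 0$, so $N\neq 0$, on $(\theta_n,\beta_n]$. Because $N$ is continuous on $[\theta_n,\beta_n]$ and $N(\beta_n)=1-y>0$ (at $\beta_n$ one has $S_n(y)-S_{n-1}(y)=0$ by Lemma~\ref{Sn}, and $2\cos\beta_n<1$), the intermediate value theorem forces $N>0$ on $(\theta_n,\beta_n]$, i.e. $s>0$ there.

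It remains to check $s(\theta_n)=0$. Since $\Delta_{\CK_n}(e^{i\theta_n})=0$ we get $NN'=0$ at $\theta_n$, and since $S_n(y)-S_{n-1}(y)\neq 0$ at $\theta_n$ (Lemma~\ref{Sn}) while $r\neq -r^{-1}$, we have $N\neq N'$, so exactly one of $N,N'$ vanishes at $\theta_n$. One rules out $N'(\theta_n)=0$ by a sign count: $N'=0$ would force $r$ and $S_n(y)-S_{n-1}(y)$ to have the same sign (as $-(y-1)>0$), but on $[\theta_n,\beta_n)$ the sign of $S_n(y)-S_{n-1}(y)$ is the opposite of that of $r$ in each residue class of $n$ modulo $3$ (this is exactly the sign information obtained, as in the proof of Lemma~\ref{Sn}, from the product formula of Lemma~\ref{chev-sign} and the location of $[\theta_n,\beta_n)$). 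Hence $N(\theta_n)=0$, so $s(\theta_n)=0$. Alternatively, at $\theta_n$ one may substitute the two relations $S_{n-1}(y)=\frac{y^2-y-1}{(y-1)(y^2-3)}S_{n-2}(y)$ and $S_{n-2}^2(y)=\frac{(y-1)^2(y^2-3)^2}{(2-y)(y^3-y^2-4y+5)}$ from the proof of Lemma~\ref{lem} directly into $N$ and simplify to $0$.

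The step I expect to be the main obstacle is the symmetric evaluation of $NN'$ and its identification with $\Delta_{\CK_n}(t)/t^{n+2}$; once that polynomial identity is in hand, the rest is sign bookkeeping — the positivity of $r\,S_{n-2}(y)$, the sign of $S_n(y)-S_{n-1}(y)$ on $[\theta_n,\beta_n)$, and $2\cos\beta_n<1$ — together with one application of the intermediate value theorem.
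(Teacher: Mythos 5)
Your proof is correct, and the identities you flag as the main obstacles do check out: with $Q=0$ and $z\neq 0$ one gets $s=N/(rS_{n-2}(y))$ with $N=r\bigl(S_n(y)-S_{n-1}(y)\bigr)-(y-1)$ (using $S_n-S_{n-1}=(y^2-y-1)S_{n-2}-(y-1)S_{n-3}$ and $r^2=S_{n-2}(y)r+1$), and the norm computation gives $NN'=-S_{n-2}(y)\bigl[(y-1)(y^2-3)S_{n-1}(y)-(y^2-y-1)S_{n-2}(y)\bigr]$, which is $-S_{n-2}(y)\Delta_{\CK_n}(t)/t^{n+2}$ by the identity in Step 1 of the proof of Lemma \ref{lem}. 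Your route is essentially a repackaging of the paper's: the paper assumes $s=0$, derives $r=S_{n-2}(y)\frac{y^2-2}{y-1}-S_{n-3}(y)$, and eliminates $r$ via $r^2-S_{n-2}(y)r-1=0$ to reach $(t^{2n-2}-1)(t^{2n+5}-t^{2n+3}+t^{2n+2}+t^3-t^2+1)=0$ — the same resultant you compute as $NN'$. What your version buys is an explicit closed form for $s$, so that positivity near $\beta_n$ follows from the one-line evaluation $N(\beta_n)=1-y>0$ rather than from Proposition \ref{x} (which the paper uses to get $s=(y\pm z)^2\ge 0$ there). The endpoint $\theta_n$ is also handled differently: the paper excludes the conjugate branch via $(r^2-1)\bigl[(y^3-y^2-4y+5)r^2+y-2\bigr]=0$ together with $|r|>1$, whereas you exclude $N'(\theta_n)=0$ by comparing the signs of $r$ and $S_n(y)-S_{n-1}(y)$. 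That last step relies on sign information that Lemma \ref{Sn} does not state (it only asserts non-vanishing), so you must actually carry out the three-case check with Lemma \ref{chev-sign}(2); it does work out ($S_n-S_{n-1}>0$ and $r<0$ when $n\equiv 0\pmod 3$, and $S_n-S_{n-1}<0$, $r>0$ otherwise, so $N'=-r^{-1}(S_n-S_{n-1})+(1-y)>0$ on all of $[\theta_n,\beta_n)$), but it should be written out rather than merely cited.
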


\begin{proof}
At $\theta = \beta_n$ we have $x_{\pm} = \mp 2$ (by Proposition \ref{x}), so $s = (y \pm z)^2 \ge  0$.

Assume that $s=0$ for some $\theta \in  (\theta_n, \beta_n]$. 
Then $Q=x-xy+z=0$, so $z=(y-1)x=(y-1)z (y+1 - \frac{S_{n-3}(y) + r}{S_{n-2}(y)} )$. Hence $(y-1)(y+1 - \frac{S_{n-3}(y) + r}{S_{n-2}(y)} ) = 1$. This implies that $r = S_{n-2}(y) \frac{y^2-2}{y-1} - S_{n-3}(y)$. Since $r - r^{-1}= S_{n-2}(y)$ we get
\begin{equation} \label{eqn}
\left( S_{n-2}(y) \frac{y^2-2}{y-1} - S_{n-3}(y) \right)^2 - S_{n-2}(y) \left( S_{n-2}(y) \frac{y^2-2}{y-1} - S_{n-3}(y) \right) - 1 =0.
\end{equation}

Let $t=e^{i\theta}$. Since $y = 2 \cos\theta = t+t^{-1}$ we have $S_{k}(y) = (t^{k+1} - t^{-k-1}) / (t - t^{-1})$. By a direct calculation, equation \eqref{eqn} becomes 
$$
(t^{2n-2} - 1)(t^{2n+5}-t^{2n+3}+t^{2n+2}+t^3-t^2+1) =0. 
$$

If $t^{2n-2} - 1=0$, then $S_{n-2}(y)=0$ which contradicts $S_{n-2}(y) \not= 0$ on $[\theta_n, \beta_n]$. 

 If $t^{2n+5}-t^{2n+3}+t^{2n+2}+t^3-t^2+1 =0$, then $\Delta_{\CK_n}(e^{i\theta}) = 0$. This contradicts the fact that the equation $\Delta_{\CK_n}(e^{i\theta}) = 0$ has no solutions in $(\theta_n, \beta_n]$. Hence $s > 0$ in $(\theta_n, \beta_n]$. 

We now prove that $s(\theta_n) =0$. At $\theta = \theta_n$ we have $\Delta_{\CK_n}(e^{i\theta_n}) = 0$, so  equation \eqref{eqn} holds true. This implies that $r = S_{n-2}(y) \frac{y^2-2}{y-1} - S_{n-3}(y)$ or $-r^{-1} = S_{n-2}(y) \frac{y^2-2}{y-1} - S_{n-3}(y)$. 

Assume that $-r^{-1} = S_{n-2}(y) \frac{y^2-2}{y-1} - S_{n-3}(y)$. Then 
\begin{eqnarray*}
S_{n-3}(y) &=& S_{n-2}(y) \frac{y^2-2}{y-1} + r^{-1} \\
&=& (r - r^{-1}) \frac{y^2-2}{y-1} + r^{-1} \\
&=&  \frac{y^2-2}{y-1} r - \frac{y^2-y-1}{y-1} r^{-1}. 
\end{eqnarray*}
Since $S^2_{n-2}(y) + S^2_{n-3}(y) - y S_{n-2}(y) S_{n-3}(y) =1$ we get 
$$
(r - r^{-1})^2 + \left( \frac{y^2-2}{y-1} r - \frac{y^2-y-1}{y-1} r^{-1} \right)^2 - y (r - r^{-1}) \left( \frac{y^2-2}{y-1} r - \frac{y^2-y-1}{y-1} r^{-1} \right) = 1.
$$
This is equivalent to $(r^2-1)\left[ (y^3-y^2-4y+5) r^2 + y- 2\right] =0$. Since $|r| > 1$, we must have $r^2 = \frac{2-y}{y^3-y^2-4y+5}$. Note that 
 $$
1- r^2 = \frac{(y^2-3)(y-1)}{y^3-y^2-4y+5}.
 $$
The only real root of the denominator is $y \approx -2.0796$, so $y^3-y^2-4y+5 > 0$. Then $1-r^2 = (y^2-3)(y-1)/(y^3-y^2-4y+5)>0$, which contradicts $|r| > 1$. 

So we must have  $r = S_{n-2}(y) \frac{y^2-2}{y-1} - S_{n-3}(y)$. This implies that $z=(y-1)z (y+1 - \frac{S_{n-3}(y) + r}{S_{n-2}(y)} ) = (y-1)x$. Hence 
$s =0$ at $\theta = \theta_n$ (since $Q=x-xy+ (s +1)z=0$). 
\end{proof}

Since $\rho_{\pm}$ has real traces, it is conjugate to either an $\mathrm{SL}_2(\BR)$ representation or an $\mathrm{SU}(2)$ representation, see e.g. \cite[Lemma 10.1]{HP}. If the latter occurs, then $|\tr \rho_{\pm}(g)|<2$ for all $g \in G(\CK_n)$. Since $\tr \rho_{\pm}([a,w]) = x^2+y^2+z^2-xyz-2 = s+ 2 >2$ on $(\theta_n, \beta_n]$, the representation $\rho_{\pm}$ is conjugate to an irreducible $\mathrm{SL}_2(\BR)$-representation on $(\theta_n, \beta_n]$.

\begin{proposition}  $x_+ \in (-2,0)$ and $x_- \in (0, 2)$ on $[\theta_n, \beta_n)$. 
\end{proposition}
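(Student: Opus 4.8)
The plan is to reduce everything to the trace $x_+$ of a meridian. Since $z_-=-z_+$ while both get multiplied by the same factor $c:=y+1-\frac{S_{n-3}(y)+r}{S_{n-2}(y)}$, we have $x_-=-x_+$, so it suffices to prove $x_+\in(-2,0)$ on $[\theta_n,\beta_n)$; then $x_-=-x_+\in(0,2)$ is automatic. For the sign $x_+<0$ I would use $Q=0$ (Proposition \ref{sol}), written via $Q=x-xy+(s+1)z$ as $x_+(1-y)+(s+1)z_+=0$, i.e. $x_+=\frac{(s+1)z_+}{y-1}$. On $[\theta_n,\beta_n]$ the angle $\theta$ lies in a small neighbourhood of $\frac{2\pi}{3}$, so $y=2\cos\theta<1$ and $y-1<0$; by Proposition \ref{s}, $s\ge 0$, hence $s+1\ge 1>0$; and $z_+=\sqrt{1+r(S_{n-2}(y)-S_{n-3}(y))}>0$ because $|z|>1$. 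Therefore $x_+\ne 0$ and $x_+<0$ on all of $[\theta_n,\beta_n]$, and consequently $c=x_+/z_+<0$ there.

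The harder point is $x_+>-2$ on $[\theta_n,\beta_n)$, which, given $x_+<0$, is equivalent to $x_+^2<4$. Using Lemma \ref{power} and the relation $r^2=S_{n-2}(y)\,r+1$ to reduce powers of $r$, one computes
$$S_{n-2}(y)^2\,(x_+^2-4)=S_{n-2}(y)\,M+P\,r,$$
where $M=S_n(y)+S_{n-3}(y)-2S_{n-2}(y)=\bigl(S_n(y)-S_{n-1}(y)\bigr)+(y-2)S_{n-2}(y)$ and $P=\bigl(S_{n-2}(y)-S_{n-3}(y)\bigr)\bigl(S_{n-1}(y)^2+1\bigr)-\bigl(S_{n-2}(y)+2S_{n-1}(y)\bigr)$. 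Since $S_{n-2}(y)M=S_{n-2}(y)\bigl(S_n(y)-S_{n-1}(y)\bigr)+(y-2)S_{n-2}(y)^2$, the task reduces to analyzing $(y-2)S_{n-2}(y)^2+P\,r$; substituting $r=\tfrac12\bigl(S_{n-2}(y)\pm\sqrt{S_{n-2}(y)^2+4}\bigr)$ (the sign being the one fixed by the residue of $n$ modulo $3$, as in Section \ref{path}) and simplifying with the Chebyshev identities, one finds this term vanishes at $\theta=\beta_n$ — consistently with $x_+=-2$ there (Proposition \ref{x}) — and the vanishing is carried by the factor $S_n(y)-S_{n-1}(y)$, which by Lemma \ref{Sn}(1) is nonzero on $[\theta_n,\beta_n)$ and vanishes at $\beta_n$. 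I would then aim to write $S_{n-2}(y)^2(x_+^2-4)=\bigl(S_n(y)-S_{n-1}(y)\bigr)\,\Psi(\theta)$ with $\Psi$ of constant sign on $[\theta_n,\beta_n]$, splitting into the cases $n\equiv 0,1,2\pmod{3}$ and estimating trigonometrically in the spirit of Lemma \ref{lem}. The overall sign is then fixed at a single point: at $\theta=\theta_n$ one has $s=0$, so $z_+=(y-1)x_+$, and the relations valid there — $S_{n-1}(y)=\frac{y^2-y-1}{(y-1)(y^2-3)}S_{n-2}(y)$ and $S_{n-2}(y)^2=\frac{(y-1)^2(y^2-3)^2}{(2-y)(y^3-y^2-4y+5)}$, from the proof of Lemma \ref{lem} — give $z_+(\theta_n)^2=(y-1)^2(y+2)$ and hence $x_+(\theta_n)^2=y+2\in(0,4)$ since $-2<y<2$; so $x_+^2-4<0$ on $[\theta_n,\beta_n)$ with a simple zero at $\beta_n$. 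Equivalently, once $x_+^2\ne 4$ on $[\theta_n,\beta_n)$ is known, a connectedness argument closes the proof: $x_+$ is continuous on $[\theta_n,\beta_n)$, never $0$ (first paragraph) and never $\pm 2$, so its image lies in one component of $\BR\setminus\{-2,0,2\}$, and that component is $(-2,0)$ because $x_+(\theta_n)=-\sqrt{y+2}\in(-2,0)$.

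Combining the two steps yields $x_+\in(-2,0)$ and $x_-=-x_+\in(0,2)$ on $[\theta_n,\beta_n)$. The main obstacle is the computation in the second paragraph: evaluating $x_+^2-4$ in closed form and showing that $S_{n-2}(y)^2(x_+^2-4)$ factors through $S_n(y)-S_{n-1}(y)$ with a cofactor of constant sign on $[\theta_n,\beta_n]$ — equivalently, a direct sign estimate for $S_{n-2}(y)\,M+P\,r$. This is a heavy Chebyshev‑polynomial and trigonometric estimate of the same flavour as Lemma \ref{lem} and Proposition \ref{root}, and the three residue cases are likely to require careful handling of monotonicity and endpoint comparisons.
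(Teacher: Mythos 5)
Your first step is correct and in fact cleaner than the paper's: from $Q=x-xy+(s+1)z=0$ you get $x_{+}=\frac{(s+1)z_{+}}{y-1}$, and since $s\ge 0$ on $[\theta_n,\beta_n]$ (Proposition \ref{s}), $z_{+}>1$ and $y<1$, this gives $x_{+}<0$ on the whole interval at once (the paper instead rules out $x=0$ by a separate resultant computation leading to $(t^{2n-2}-1)(t^{2n+1}-1)=0$ and Lemma \ref{Sn}(2)). The identity $x_{-}=-x_{+}$ and the endpoint value $x_{+}(\theta_n)=-2\cos(\theta_n/2)\in(-2,0)$ are also right, so the connectedness scheme you describe would close the argument \emph{provided} $x_{\pm}\ne\mp 2$ on $[\theta_n,\beta_n)$.

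That last point is exactly where the proposal has a genuine gap: you reduce everything to showing that $S_{n-2}^2(y)(x_{+}^2-4)$ factors as $\bigl(S_n(y)-S_{n-1}(y)\bigr)\Psi(\theta)$ with $\Psi$ of constant sign, but you neither produce $\Psi$ nor prove its sign, and you flag this yourself as the main obstacle. There is concrete reason to expect this cofactor analysis to be nontrivial rather than routine: the paper handles the same step by eliminating $r$ between $r^2-S_{n-2}(y)r-1=0$ and the equation $x^2=4$ (written in $t=e^{i\theta}$), obtaining
$$(t^{2n-2}-1)(t^{2n+1}+1)\bigl(1+t^{2n-5}+4t^{2n-4}-8t^{2n-2}+4t^{2n}+t^{2n+1}+t^{4n-4}\bigr)=0,$$
where $t^{2n+1}+1=0$ is precisely your factor $S_n(y)-S_{n-1}(y)=0$. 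The third factor is not disposed of by any monotonicity or positivity of Chebyshev products; it requires a separate estimate ($t^3+t^{-3}+4t^2+4t^{-2}-8=(y+2)(y^2+2y-7)-2<-2$, forcing $t^{2n-2}+t^{2-2n}>2$, which is impossible on the unit circle). In your single-valued formulation this third factor is exactly the obstruction hiding inside the sign of $\Psi$ (paired with the conjugate root $-r^{-1}$), so the claim ``$\Psi$ has constant sign on $[\theta_n,\beta_n]$'' is doing real, unproven work. Until that (or an equivalent elimination argument) is supplied, the inclusion $x_{+}\in(-2,0)$ on $[\theta_n,\beta_n)$ is not established.
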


\begin{proof}
At $\theta = \theta_n$, by Proposition \ref{s} we have $s =0$. Then $Q=x-xy+ (s +1)z=0$ implies that $z = (y-1)x$, so the sign of $x_{\pm}$ is $\mp$. We have $s = x^2+y^2+z^2-xyz-4 =(y-2)(y+2-x^2)=0$. Then $x^2 = y+2 = 4 \cos^2(\theta_n/2)$, so $x_{\pm} = \mp 2 \cos(\theta_n/2)$. 

Assume that $|x|=2$ for some $[\theta_n, \beta_n)$. Since $x S_{n-2}(y) = z [(y+1) S_{n-2}(y) - S_{n-3}(y) -r]$ and  $z^2=1+r( S_{n-2}(y) - S_{n-3}(y) )$, we have  
\begin{equation} \label{eqn-x}
4 S^2_{n-2}(y) =  [ 1 + r (S_{n-2}(y) - S_{n-3}(y)) ] [(y+1)S_{n-2}(y) -S_{n-3}(y) - r ]^2.
\end{equation}

Let $t=e^{i\theta}$. Since $y = 2\cos\theta = t+t^{-1}$ we have $S_{k}(y) = (t^{k+1} - t^{-k-1}) / (t - t^{-1})$. By eliminating $r$ from $r^2 - S_{n-2}(y) r -1=0$ and equation \eqref{eqn-x} we obtain
$$(t^{2n-2}-1)(t^{2n+1}+1)(1 + t^{2 n-5}+4 t^{2 n-4}-8 t^{2 n-2}+4 t^{2 n}+t^{2 n+1}+t^{4 n-4})=0.$$

If $t^{2n-2} - 1 = 0$, then $S_{n-2}(y)=0$ which contradicts $S_{n-2}(y) \not= 0$  on $[\theta_n, \beta_n]$.

If $t^{2n+1}+1 = 0$, then $S_n(y) - S_{n-1}(y) =0$ which contradicts $S_n(y) - S_{n-1}(y)  \not= 0$  on $[\theta_n, \beta_n)$ (by Lemma \ref{Sn}). 

If the last factor equals $0$, then $t^{2n-2} + t^{2-2n} + t^3 + t^{-3} + 4 t^2 + 4 t^{-2} -8 =0$. We have
\begin{eqnarray*}
 t^3 + t^{-3} + 4 t^2 + 4 t^{-2} -8 &=& y^3-3y+4(y^2-2)-8 \\
 &=& (y+2)(y^2+2y-7) -2 \\
 &<& -2.
\end{eqnarray*}
Hence $t^{2n-2} + t^{2-2n} > 2$, contradicting $t^{2n-2} + t^{2-2n} = 2 \cos (2n-2)\theta \le 2$. 

Assume that $x=0$ for some $[\theta_n, \beta_n)$. Then $r = (y+1) S_{n-2}(y) - S_{n-3}(y)$. Since $r^2 - S_{n-2}(y) r -1=0$ and $S_{k}(y) = (t^{k+1} - t^{-k-1}) / (t - t^{-1})$, we obtain $(t^{2n-2}-1)(t^{2n+1}-1) =0$. This implies that $t^{2n+1}-1 = 0$, so $S_n(y) + S_{n-1}(y) =0$ which contradicts $S_n(y) + S_{n-1}(y)  \not= 0$ on $[\theta_n, \beta_n]$ (by Lemma \ref{Sn}).
\end{proof}

\section{Canonical longitude} \label{long}

The canonical longitude of the $(-2,3,2n+1)$-pretzel knot $\CK_n$ corresponding to the meridian $\mu : = a$ is $\lambda = a^{-(4n+7)} waw^naw^naw$. Let $\rho: G(\CK_n) \to \mathrm{SL}_2(\BC)$ be an irreducible representation. Up to conjugation we can assume that 
 $$
 \rho(\mu)= \left[
\begin{array}{cc}
M & *\\
0 & M^{-1}
\end{array}
\right] \quad \text{and} \quad \rho(\lambda) = \left[
\begin{array}{cc}
L & *\\
0 & L^{-1}
\end{array}
\right].
$$
We now recall a formula for $L$ from \cite{Ch}. Note that the longitude of $\CK_n$ considered there  is actually the inverse of $awaw^naw^naw$.  Let 
\begin{eqnarray*}
s_1 &:=& \tr \rho(bc) = \tr \rho(w) =y, \\
s_2 &:=& \tr \rho(ac) =\tr \rho(a a waw^{-1}a^{-1}) = \tr \rho(a waw^{-1}), \\
s_3 &:=& \tr \rho(ab^{-1}) = \tr \rho(a w^{-1}awaw^{-1}a^{-1}) = \tr \rho(w^{-1}awaw^{-1}), \\
\sigma &:=& s_1 + s_2 +s_3.
\end{eqnarray*} 
Note that $\tr \rho(awa^{-1}w^{-1}) = x^2+y^2+z^2-xyz-2 = s+2$. 

Since $\rho(a) + \rho(a^{-1}) = x I$ (by the Cayley-Hamilton theorem for $\mathrm{SL}_2(\BC)$) we have $s_2 + \tr \rho(awa^{-1}w^{-1}) = x^2$. Then $s_2 = x^2 - 2 - s$. Similarly,  
$s_3 + \tr \rho(w awaw^{-1}) = y s_2$ and so $s_3 = ys_2 - (xz-y)$. Hence  
\begin{eqnarray*}
\sigma  &=&  (y+1)s_2+2y-xz \\
&=& - (y+1) s -2 + (y+1)x^2 - xz.
\end{eqnarray*}

By \cite[Section 4]{Ch} we have $L = M^{-(4n+8)}\ell$ where
$$
(\ell+1)x(\sigma + 2 - \xi - x^2) = (\ell M + M^{-1}) (\sigma +2 -2x^2)
$$ for some $\xi$ satisfying $(\xi-2-s_2)s_2 = \sigma - s_2 - \xi$ and $(\sigma + 2 - 2\xi) s_3 = s_3^2-s_3 \xi + \sigma -2$. 

\no{
\begin{lemma}
$s_2 \not= -1$ for $\theta \in [\theta_n, \beta_n]$.
\end{lemma}

\begin{proof}
Assume $s_2=-1$. Then $\sigma = 0$ and $s = x^2-1$. This implies that  $xz = y-1$ and $y^2+z^2 - xyz = 3$. Hence $3 = y^2 + z^2 - y(y-1)= z^2+y$. Then $z^2 = 3 - y$, so $ r = \frac{z^2-1}{S_{n-2}(y) - S_{n-3}(y)}= \frac{2-y}{S_{n-2}(y) - S_{n-3}(y)}$. Since $r^2 - r S_{n-2}(y)-1 = 0$ we get
\begin{equation} \label{s2}
\left( \frac{2-y}{S_{n-2}(y) - S_{n-3}(y)}\right)^2 - S_{n-2}(y) \left( \frac{2-y}{S_{n-2}(y) - S_{n-3}(y)}\right) - 1 =0.
\end{equation}

Let $t=e^{i\theta}$. Since $y = 2 \cos\theta = t+t^{-1}$ we have $S_{k}(y) = (t^{k+1} - t^{-k-1}) / (t - t^{-1})$. By a direct calculation, equation \eqref{s2} becomes 
$$
(t^{2n-5} + 1)(t^{2n+1} - 2 t^{2n} - 2t + 1) =0. 
$$

If $t^{2n-5} + 1=0$ then $\theta = \frac{(2k+1)\pi}{2n-5}$. 
\end{proof}
}

Note that $\sigma +2 -2x^2 = - (y+1) s + (xy - x - z)x$. From $Q = x - x y + (s+1) z = 0$, we have $zs = xy - x - z$. Hence 
\begin{equation} \label{left}
\sigma +2 -2x^2 = - (y+1) s + xz s = (xz - y - 1)s.
\end{equation}

\begin{lemma} \label{lr}
$x(\xi - x^2)  = (z-x)s.$
\end{lemma}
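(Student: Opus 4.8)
The plan is to compute $x(\xi - x^2)$ directly by eliminating $\xi$ using the two defining relations for $\xi$ together with the formula $s_2 = x^2 - 2 - s$. First I would rewrite the relation $(\xi - 2 - s_2)s_2 = \sigma - s_2 - \xi$ as a linear equation in $\xi$: collecting terms gives $\xi(s_2 + 1) = \sigma - s_2 + (2 + s_2)s_2 = \sigma + s_2^2 + s_2$. Substituting $s_2 = x^2 - 2 - s$ and the expression $\sigma = -(y+1)s - 2 + (y+1)x^2 - xz$ from the text, I would solve for $\xi$ (assuming $s_2 + 1 \neq 0$, i.e. $\sigma \neq 0$; the degenerate case can be handled separately or by a continuity/Zariski-density argument since we only need the identity on a path where things are generic). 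Then $\xi - x^2 = \frac{\sigma + s_2^2 + s_2}{s_2+1} - x^2 = \frac{\sigma + s_2^2 + s_2 - x^2(s_2+1)}{s_2+1}$, and I would simplify the numerator using $s_2 + 1 = x^2 - 1 - s$ so that $s_2^2 - x^2 s_2 = s_2(s_2 - x^2) = -(x^2 - 2 - s)(2 + s)$, etc.

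The key simplification is that the relation $Q = x - xy + (s+1)z = 0$ from the text gives $zs = xy - x - z$, equivalently $z(s+1) = xy - x$, i.e. $z(s+1) = x(y-1)$; this is exactly what was used to derive \eqref{left}. I expect that after substituting everything into the numerator of $\xi - x^2$ and using this relation repeatedly (to trade occurrences of $y$ for $z$, $x$, $s$ or vice versa), the numerator will factor as $(z - x)s \cdot (s_2 + 1)/x$ or something equivalent, so that multiplying by $x$ and cancelling $s_2 + 1$ yields $x(\xi - x^2) = (z-x)s$. A useful auxiliary identity along the way is $\sigma + 2 - 2x^2 = (xz - y - 1)s$ from \eqref{left}, since $\sigma$ appears linearly in the numerator; rewriting $\sigma = 2x^2 - 2 + (xz - y - 1)s$ should make the $x^2$ terms cancel cleanly.

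The main obstacle is purely the bookkeeping: the numerator is a polynomial in $x, y, z, s$ of moderate degree, and one must use the two constraints $s = x^2 + y^2 + z^2 - xyz - 4$ and $zs = xy - x - z$ in the right order to collapse it to $(z-x)s$ without an explosion of terms. I would organize the computation by first eliminating $s_2$ and $\sigma$ in favor of $x, y, z, s$, then reducing modulo the ideal generated by $zs - (xy - x - z)$, and finally checking that what remains is divisible by $s$ with quotient $(z-x)$ times a factor that cancels against the denominator. One should also double-check that the second relation on $\xi$, namely $(\sigma + 2 - 2\xi)s_3 = s_3^2 - s_3 \xi + \sigma - 2$, is consistent with the value of $\xi$ obtained from the first relation (it determines the same $\xi$, since both come from the single trace identity for the longitude in \cite{Ch}); this consistency is not needed for the statement of the lemma but is worth a remark to be safe.
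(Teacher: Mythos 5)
Your handling of the generic case $s_2 \neq -1$ is exactly the paper's argument: solving the first relation for $\xi$ gives $\xi - x^2 = \frac{\sigma}{s_2+1} - (2+s)$, and after substituting $\sigma = (y+1)s_2 + 2y - xz$ and using $x(y-1) = (s+1)z$ (from $Q=0$) together with $s_2 + 1 = x^2 - 1 - s$, the remaining fraction collapses to $-z$ and one gets $x(\xi - x^2) = (s+1)z - xs - z = (z-x)s$. That part of your plan goes through.

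The gap is the degenerate case $s_2 = -1$, which you propose to dispose of by continuity or Zariski density; that does not work as stated. When $s_2 = -1$ the first relation $(\xi-2-s_2)s_2 = \sigma - s_2 - \xi$ degenerates to the single condition $\sigma = 0$ and places no constraint on $\xi$ at all: $\xi$ is then pinned down only by the second relation $(\sigma+2-2\xi)s_3 = s_3^2 - s_3\xi + \sigma - 2$, which you relegate to an optional consistency remark. Since $\frac{\sigma}{s_2+1}$ is a genuine $\frac{0}{0}$ indeterminacy at such a point, the value of $\xi$ there is not a priori the limit of your generic formula, so the continuity of $\xi$ across the locus $s_2=-1$ is precisely what would need to be proved rather than assumed. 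Nor can one simply assert that $s_2 = -1$ never occurs on the path $\theta \in [\theta_n,\beta_n]$; the paper does not establish this and instead treats the case directly. The missing computation is short: $s_2 = -1$ forces $\sigma = 0$ and $s = x^2 - 1$, whence $xz = y-1$ and, via $s = x^2+y^2+z^2-xyz-4$, also $xz + z^2 = 2$; moreover $s_3 = ys_2 - (xz - y) = -xz$ and the second relation gives $s_3 \neq 0$ and $\xi = xz + 2 - \frac{2}{xz}$, so that $x(\xi - x^2) = x^2 z + 2x - \frac{2}{z} - x^3 = x^2z + x - z - x^3 = (z-x)(x^2-1) = (z-x)s$. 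You need to supply this (or an equivalent) argument for the lemma to hold in full.
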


\begin{proof}
If $s_2 \not= -1$, then $\xi = \frac{\sigma}{s_2+1} + s_2$. We have
\begin{eqnarray*}
\xi - x^2 &=& \frac{(y+1)s_2+2y-xz}{s_2+1} - 2 - s \\
&=& y-1-s + \frac{y-1-xz}{s_2+1}.
\end{eqnarray*}
Since $x(y-1) = (s+1)z$ we obtain
\begin{eqnarray*}
x(\xi - x^2) &=& (s+1)z - xs + \frac{(s+1)z-x^2z}{x^2-1-s} \\
&=& (s+1)z - xs - z \\
&=&  (z-x)s.
\end{eqnarray*}

If $s_2 = -1$, then $(\xi-2-s_2)s_2 = \sigma - s_2 - \xi$ becomes $\sigma=0$. Since $(\sigma + 2 - 2\xi) s_3 = s_3^2-s_3 \xi + \sigma -2$, we have $s_3 \xi = - s_3^2 + 2 s_3 + 2$. Then $s_3 \not= 0$ and $\xi = - s_3 + 2 + \frac{2}{s_3}$.

From $\sigma = (y+1)s_2+2y-xz= 0$ and $s = x^2 - 2 - s_2 = x^2-1$, we have $xz = y-1$ and $y^2+z^2 - xyz = 3$. Then $3 = (xz+1)^2 + z^2 - xz(xz+1) = xz + z^2 +1$, so $xz + z^2 = 2$. 

Since $s_3 = ys_2 - (xz-y) = - xz$, we have $\xi = xz + 2 - \frac{2}{xz}$. Hence
\begin{eqnarray*}
x(\xi - x^2) &=& x^2z + 2 x - \frac{2}{z} - x^3 \\
&=& x^2z + 2 x - (x+z) - x^3 \\
&=& (z-x)(x^2-1) \\
&=& (z-x) s.
\end{eqnarray*}
This completes the proof of the lemma.
\end{proof}

From \eqref{left} and Lemma \ref{lr} we have
\begin{eqnarray} \label{right}
x(\sigma + 2 - \xi - x^2) &=& x (\sigma +2 -2x^2) + x (x^2 - \xi) \nonumber \\
&=& (x^2z - xy - x)s + (x-z)s \nonumber \\
&=& (x^2z - xy - z)s. \label{right}
\end{eqnarray}

\no{
We have $x(\sigma + 2 - \xi - x^2) = x (\sigma +2 -2x^2) + x (x^2 - \xi) = (x^2z - xy - x)s + (x-z)s$.

From $Q = x - x y + (s+1) z = 0$, we have $s  = -(x-xy+z)/z$. Note that $\xi = \frac{\sigma}{s_2+1} + s_2$ and $s_2 = x^2 - 2 - s$.
Then by direct calculations we have
\begin{eqnarray*}
x(\sigma + 2 - \xi - x^2)  &=& (-x+xy-z)(-xy-z+x^2z)/z = - s (xy+z-x^2z), \\
\sigma + 2 - 2 x^2 &=& (-x+xy-z)(-1-y+xz)/z = - s (1+y-xz).
\end{eqnarray*}
}

Let $C=x^2z - xy - z$ and $D=xz - y - 1$. From \eqref{left} and  \eqref{right} we obtain
$$
(\ell+1)s C= (\ell M + M^{-1}) s D.
$$
% Since $\rho$ is irreducible, $s \not=0$. Hence $(\ell+1) C= (\ell M + M^{-1}) D$. 

We now focus on the representation $\rho_{\pm} (\theta): G(\CK_n) \to \mathrm{SL}_2(\BC)$, for $\theta \in [\theta_n, \beta_n]$, defined in Section \ref{path}. Then we have the following. 

% If $MD-C \not=0$ then $\ell = M^{-1}  \frac{CM - D}{DM - C}$ and $L = M^{-(4n+9)} \frac{CM - D}{DM - C}$.

\begin{proposition} \label{4}
 If $n \not= 4$ then $D^2-C^2 > 0$ for $\theta \in [\theta_n, \beta_n)$.
\end{proposition}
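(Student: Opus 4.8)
The plan is to reduce $D^2-C^2$ to the function $G$ of Lemma~\ref{lem}. For $\rho_{\pm}$ one has $z^2=1+(S_{n-2}(y)-S_{n-3}(y))r$, $x=z\bigl(y+1-\tfrac{S_{n-3}(y)+r}{S_{n-2}(y)}\bigr)$ and $r^2=S_{n-2}(y)r+1$. First I would prove, by a direct computation using the Chebyshev identity $S_{n-1}^2(y)+S_{n-2}^2(y)-yS_{n-1}(y)S_{n-2}(y)=1$, the two clean identities
$$D=r\bigl(S_{n-1}(y)-S_{n-2}(y)\bigr)-1,\qquad C=\frac{S_{n-1}(y)}{S_{n-2}(y)}\,z\,D.$$
Since $S_{n-2}(y)\neq0$ on $[\theta_n,\beta_n]$ (Lemma~\ref{sign}), the second identity gives
$$D^2-C^2=\frac{D^2}{S_{n-2}^2(y)}\bigl(S_{n-2}^2(y)-z^2S_{n-1}^2(y)\bigr),$$
so it suffices to show $D\neq0$ and $S_{n-2}^2(y)-z^2S_{n-1}^2(y)>0$ on $[\theta_n,\beta_n)$.

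For $D\neq0$: if $D=0$ then $C=0$, and then $D=xz-y-1=0$, $C=x^2z-xy-z=0$ force $x=z$ and $x^2=y+1$, whence $s=x^2+y^2+z^2-xyz-4=y-2<0$, contradicting $s\ge0$ on $[\theta_n,\beta_n)$ (Proposition~\ref{s}). Thus $D^2>0$.

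For $S_{n-2}^2(y)-z^2S_{n-1}^2(y)>0$, set $E:=S_{n-2}(y)-S_{n-3}(y)$. Substituting the explicit value of $r$ into $z^2=1+Er$ and using Lemma~\ref{sign} to read off signs, one gets
$$S_{n-2}^2(y)-z^2S_{n-1}^2(y)=\tfrac12\Bigl(h-|E|\,S_{n-1}^2(y)\sqrt{S_{n-2}^2(y)+4}\Bigr),\quad h:=2\bigl(S_{n-2}^2(y)-S_{n-1}^2(y)\bigr)-E\,S_{n-2}(y)\,S_{n-1}^2(y).$$
The heart of the argument, which I would verify by eliminating $r$ through $r^2=S_{n-2}(y)r+1$ and simplifying with the Chebyshev identity, is the polynomial identity
$$h^2-E^2S_{n-1}^4(y)\bigl(S_{n-2}^2(y)+4\bigr)=4\,S_{n-2}(y)\bigl(S_n(y)-S_{n-1}(y)\bigr)\,G$$
(one meets $S_{n-1}^2(y)-S_{n-1}(y)S_{n-2}(y)-1=S_{n-2}(y)(S_n(y)-S_{n-1}(y))$ along the way). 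This is the only point at which the hypothesis $n\neq4$ enters, through Lemma~\ref{lem}.

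To finish: a Chebyshev sign analysis on $[\alpha_n,\beta_n]$, of the same type as in Lemma~\ref{sign} and Lemma~\ref{Sn}, shows that on $[\theta_n,\beta_n)$ the factors $S_{n-2}(y)$ and $S_n(y)-S_{n-1}(y)$ have opposite signs and that $S_{n-1}^2(y)<S_{n-2}^2(y)$ (equivalently $S_{2n-2}(2\cos\theta)<0$). With $G<0$ (Lemma~\ref{lem}) the right side of the key identity is positive, so $h^2>E^2S_{n-1}^4(y)(S_{n-2}^2(y)+4)>\bigl(E\,S_{n-2}(y)\,S_{n-1}^2(y)\bigr)^2$; since $E\,S_{n-2}(y)>0$ (Lemma~\ref{sign}) and $2(S_{n-2}^2(y)-S_{n-1}^2(y))>0$, the formula for $h$ together with $|h|>E\,S_{n-2}(y)\,S_{n-1}^2(y)$ forces $h>0$, hence $h>|E|\,S_{n-1}^2(y)\sqrt{S_{n-2}^2(y)+4}$ and therefore $S_{n-2}^2(y)-z^2S_{n-1}^2(y)>0$ (the case $S_{n-1}(y)=0$ being immediate). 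This gives $D^2-C^2>0$. The main obstacle is the key polynomial identity relating $h$ to $G$; a secondary one is assembling the several Chebyshev sign estimates used in this last paragraph.
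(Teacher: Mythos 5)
Your proof is correct, but it takes a genuinely different route from the paper's. The paper argues by continuity: it evaluates $D^2-C^2$ explicitly at $\theta=\theta_n$ (where $s=0$ forces $y=x^2-2$, $z=x^3-3x$) to get positivity there, and then rules out any zero of $D^2-C^2$ on $[\theta_n,\beta_n)$ through a case analysis ($x=\pm1$, $x=\pm2$, $x=z$, \dots) that reduces $(D-C)(D+C)=0$ to $G=0$, contradicting Lemma \ref{lem}. You instead factor $D^2-C^2$ outright. Your two identities are correct: from $z^2=1+(S_{n-2}-S_{n-3})r$, $r^2=S_{n-2}r+1$ and the Chebyshev relation one gets $xz=y+r(S_{n-1}-S_{n-2})$, whence $D=r(S_{n-1}-S_{n-2})-1$ and $C=\frac{S_{n-1}}{S_{n-2}}zD$; and your key identity also checks out --- $h^2-E^2S_{n-1}^4(S_{n-2}^2+4)$ is exactly $4$ times the product of $S_{n-2}^2-z^2S_{n-1}^2$ over the two roots $r$ and $\tilde r=-1/r$ of $r^2-S_{n-2}r-1=0$, and that symmetric product equals $S_{n-2}(S_n-S_{n-1})G$. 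This explains structurally why $G$ (and hence the hypothesis $n\ne4$ via Lemma \ref{lem}) appears, which the paper's elimination computation leaves opaque. Your route dispenses with the endpoint evaluation and the intermediate-value argument, at the cost of two additional routine Chebyshev sign facts: $S_{n-2}^2-S_{n-1}^2=-S_{2n-2}>0$ on the interval, and the sign of $S_n-S_{n-1}$ being opposite to that of $S_{n-2}$ there (the latter is precisely the content of the omitted proof of Lemma \ref{Sn}); both can be checked by the same localization of $[\theta_n,\beta_n]$ used in Lemma \ref{sign}. The only items you leave as ``to be verified'' are these polynomial identities and sign estimates, all of which are direct computations that do go through, so I see no gap.
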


\begin{proof}
The proof is divided into two steps.

\underline{Step 1}: We first claim that $D^2-C^2 > 0$ at $\theta =  \theta_n$. Indeed, since $\theta =  \theta_n$ we have $s = 0$, so $Q=x-xy+ (s +1)z=0$  implies that $z = (y-1)x$. Then $s=(y-2)(y+2-x^2)$. So $y=x^2-2$ and $z=x^3-3x$. By a direct calculation, we have 
\begin{eqnarray*}
D^2-C^2 &=& - x^{10} + 11 x^8 - 43 x^6 +  68 x^4 - 33 x^2 + 1 \\
&=& - y^5 + y^4 + 5 y^3 - 6 y^2 - 5 y + 7.
\end{eqnarray*}
The real roots of the above polynomials are approximately $-1.96757$, $-1.22062$ and  $1.66787$. Note that $y < 0$, since $\theta_n \in ( \pi/2, \pi)$. 

If $n \not\equiv 1 \pmod{3}$, then by Proposition \ref{root} we have $\theta_n < \frac{2\pi}{3}$. So $ y > 2 \cos \frac{2\pi}{3}= -1$. 

If $n \equiv 1 \pmod{3}$ then  $\theta_n \in (  \frac{2\pi}{3} + \frac{\pi}{6n+6}, \frac{2\pi}{3} +  \frac{4\pi}{12n+9} )$. If $n \ge 10$, then $y > 2 \cos (\frac{2\pi}{3} + \frac{4\pi}{129}) \approx -1.16372$. If $n=7$, then by a direct calculation we have $\theta_n \approx 2.20391$ and so $y \approx -1.18332$. % If $n=4$, then $\theta_n \approx 2.2728$ so $y \approx -1.2915$. 

Hence, at $\theta =  \theta_n$ we have $y \in (-1.22062, 1.66787)$ and so $D^2-C^2 > 0$.

\underline{Step 2}:  We now prove that $D^2-C^2 \not= 0$ on $[\theta_n, \beta_n)$. Assume that $D^2-C^2=0$ for some $\theta \in [\theta_n, \beta_n)$. Then  
\begin{equation} \label{CD}
(-1-y+xy+z+xz-x^2z)(1+y+xy+z-xz-x^2z)=0.
\end{equation}

If $x=1$ then $(-1+z)(1+2y-z)=0$. Since $|z| > 1$, we must have $z=2y+1$. The equation $Q = x-xy+(s +1)z =0$ implies that $3y^2(2y+3)=0$, which contradicts $0 > y > 2\cos \beta_n \ge  2 \cos (\frac{2\pi}{3} + \frac{\pi}{15}) \approx -1.33826$. 
% If $n=4$, then $y=-3/2$ implies $z=-2$ and thus $R_n = (y+2-xz-x^2-z^2+xyz)S_{n-2}(y) - (y^2+y-2+z^2-xyz)S_{n-3}(y) = 1/4 \not=0.$

%If $x=-1$ then $(-1-2y-z)(1+z)=0$, so $z=-(2y+1)$. The equation $x-xy+(\delta +1)z =0$ implies that $-3y^2(2y+3)=0$. If $n=4$, then $y=-3/2$ implies $z=2$ and $Q_n = 1/4 \not=0$. 

Similarly, the case $x=-1$ cannot occur. Hence $x \not= \pm 1$. Then \eqref{CD} becomes $y = \frac{1-z-xz+x^2z}{x-1}$ or $y = \frac{-1-z+xz+x^2z}{x+1}$. From $x-xy+(s +1)z =0$ we have $(x-2)(x-z)(-1+x-z+z^2)=0$ or $(x+2)(x-z)(1+x-z-z^2)=0$ respectively.  

Note that $x \in (-2,2)$. If $x=z$ then $y+1-\frac{S_{n-3}(y)+r}{S_{n-2}(y)} = \frac{x}{z} = 1$, so $r = y S_{n-2}(y) - S_{n-3}(y) = S_{n-1}(y)$. Since $r - 1/r=S_{n-2}(y)$ we get 
$$
S_{n-2}(y)[ (y-1)S_{n-1}(y) - S_{n-2}(y)]=0.
$$ 
The last factor is equal to $S_{n}(y) - S_{n-1}(y)$, which is non-zero on $[\theta_n, \beta_n)$. Hence $x \not= z$. 

If $y = \frac{1-z-xz+x^2z}{x-1}$ then $-1+x-z+z^2=0$. This implies that $x=1+z-z^2$ and so $y = -(z+1)(z^3-2z^2+z-1)/z = z^{-1} + z + z^2 - z^3$. From $x= z (1 + \frac{S_{n-1}(y) - r}{S_{n-2}(y)})$ we have 
\begin{eqnarray*}
r &=& S_{n-1}(y) + (1-x/z) S_{n-2}(y) \\
&=& S_{n-1}(y) + (z-1/z) S_{n-2}(y). 
\end{eqnarray*}
Since $r - r^{-1}=S_{n-2}(y)$, by a direct calculation we obtain
$$
(z^4-z^3-3z^2+z+1) S_{n-2}(y) (z^{-1} S_{n-2}(y) - S_{n-1}(y)) =0.
$$ 
If $z^4-z^3-3z^2+z+1=0$ then $y^2-5=0$, which contradicts $y =2\cos\theta$. Hence $z^{-1} S_{n-2}(y) - S_{n-1}(y) = 0$. Since $S_{n-2}(y) \not= 0$ we have $z  = \frac{S_{n-2}(y)} {S_{n-1}(y)}$. Then $r = (z-1/z) {S_{n-2}(y)} + {S_{n-1}(y)} = {S^2_{n-2}(y)} / {S_{n-1}(y)}$. Combining with $r - 1/r=S_{n-2}(y)$ we obtain 
\begin{equation} \label{F}
S^2_{n-1}(y) + ( S_{n-1}(y) - S_{n-2}(y) ) S^3_{n-2}(y) =0.
\end{equation} 

Similarly, if $y = \frac{-1-z+xz+x^2z}{x+1}$ then $1+x-z-z^2=0$  and \eqref{F} also holds true.  This contradicts Lemma \ref{lem}. Hence $D^2-C^2 \not= 0$ on $[\theta_n, \beta_n)$. Since it is positive at $\theta = \theta_n$, we must have $D^2-C^2 > 0$ for $\theta \in [\theta_n, \beta_n)$.
\end{proof}

\begin{remark} \label{Rem}
Proposition \ref{4} does not hold true for $n=4$. Indeed, if $n=4$ then $\theta_n \approx 2.2728$ and $\beta_n \approx 2.44346$. At $\theta = \theta_n$, we have $y \approx -1.2915$ and so $D^2-C^2 = - y^5 + y^4 + 5 y^3 - 6 y^2 - 5 y + 7 < 0$. However, at $\theta = 2.3 \in (\theta_n, \beta_n)$ we have $D^2-C^2 \approx 1.13479 > 0$. Hence the sign of $D^2-C^2$ changes on $[\theta_n, \beta_n)$.
\end{remark}

Note that $s > 0$ on $(\theta_n, \beta_n]$ (by Proposition \ref{s}). Hence 
$$
L = M^{-(4n+8)}\ell = M^{-(4n+9)} \frac{CM - D}{DM - C}
$$ 
holds true for all $\theta \in (\theta_n, \beta_n)$. Although $s|_{\theta = \theta_n} = 0$ (reducible representation so $L=1$), the above formula for $L$ still holds true at $\theta = \theta_n$. In fact, at $\theta = \theta_n$ we have $y=x^2-2 = M^2+M^{-2}$ and $z=x^3-3x = M^3 + M^{-3}$. Note that  we can take $M = e^{i\theta_n/2}$, since $y = 2\cos\theta_n$. By a direct calculation, $M^{-(4n+9)} \frac{CM - D}{DM - C} = 1$ becomes $M^{4n+10}-M^{4n+6}+M^{4n+4}+M^6-M^4+1=0$. This is equivalent to $\Delta_{\CK_n}(M^2)=0$, which holds true since $\Delta_{\CK_n}(e^{i\theta_n}) = 0$.

At $\theta = \beta_n$,  we have $x_{\pm} =  \mp 2$ and $z_{\pm} = \pm (1-y)$. Then $C \pm D = 0$, so $M^{-(4n+9)} \frac{CM - D}{DM - C}$ is an indeterminate form $\frac{0}{0}$.  However, we will show the following. 

\begin{proposition} \label{L}
At $\theta = \beta_n$ we have $L = 1$. 
\end{proposition}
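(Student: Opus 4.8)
The plan is to show first that $L\in\{1,-1\}$ at $\theta=\beta_n$ for peripheral reasons, and then to pin down the sign by taking the limit $\theta\to\beta_n^-$ in the formula $L=M^{-(4n+9)}\,\frac{CM-D}{DM-C}$. For the first step, recall that $s>0$ at $\beta_n$ by Proposition~\ref{s}, so $\tr\rho_\pm([a,w])=s+2>2$ and $\rho_\pm(\beta_n)$ is irreducible with non-abelian image. If $\rho_\pm(a)$ were $\pm I$, the image would be generated by a central element and $\rho_\pm(w)$, hence abelian, a contradiction; so $\rho_\pm(\mu)=\rho_\pm(a)$, having trace $x_\pm=\mp2$ by Proposition~\ref{x}, is a noncentral parabolic. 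Since $\mu$ and $\lambda$ commute in $G(\CK_n)$, $\rho_\pm(\lambda)$ lies in the centralizer of $\rho_\pm(\mu)$, which in a suitable basis is $\{\pm\begin{pmatrix}1&t\\0&1\end{pmatrix}:t\in\BC\}$; every such matrix has trace $\pm2$, so $L=L(\beta_n)\in\{1,-1\}$.

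For the second step, note that $\tr\rho_\pm(\lambda)=L+L^{-1}$ is a polynomial in $x,y,z$, hence continuous on $[\theta_n,\beta_n]$; therefore it suffices to show $\lim_{\theta\to\beta_n^-}L(\theta)=1$, since then $\tr\rho_\pm(\lambda)(\beta_n)=2$ and the first step forces $L(\beta_n)=1$. Put $\epsilon:=\lim_{\theta\to\beta_n^-}M\in\{1,-1\}$; this equals $-1$ for $\rho_+$ and $+1$ for $\rho_-$, since $x_\pm=M+M^{-1}\to\mp2$. As $4n+9$ is odd, $M^{-(4n+9)}\to\epsilon$, so it is enough to prove $\frac{CM-D}{DM-C}\to\epsilon$, whence $L\to\epsilon^2=1$. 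Writing $M=e^{i\phi}$ and multiplying numerator and denominator by $e^{-i\phi/2}$,
$$
\frac{CM-D}{DM-C}=\frac{(C-D)\cos\tfrac{\phi}{2}+i(C+D)\sin\tfrac{\phi}{2}}{-(C-D)\cos\tfrac{\phi}{2}+i(C+D)\sin\tfrac{\phi}{2}}.
$$
Using $x_\pm=\mp2$ and $z_\pm=\pm(1-y)$ at $\beta_n$ (Proposition~\ref{x} and its proof), one finds $C=3-y$, $D=y-3$ for $\rho_+$ (so $C-D\ne0$, $C+D=0$) and $C=D=y-3$ for $\rho_-$ (so $C+D\ne0$, $C-D=0$). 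Since $C$ and $D$ are smooth functions of $\theta$ on $[\theta_n,\beta_n]$, the vanishing combination ($C+D$ when $\epsilon=-1$, $C-D$ when $\epsilon=+1$) is $O(\beta_n-\theta)$, whereas its partner trigonometric factor vanishes only like $\sqrt{\beta_n-\theta}$: indeed $x_++2=4\cos^2\tfrac{\phi}{2}$ and $2-x_-=4\sin^2\tfrac{\phi}{2}$, and (see below) $x_++2$ resp.\ $2-x_-$ vanishes simply at $\beta_n$. Hence in the displayed quotient exactly one term survives in both numerator and denominator, and the quotient tends to $(C-D)/(-(C-D))=-1=\epsilon$ when $\epsilon=-1$, and to $i(C+D)/\bigl(i(C+D)\bigr)=1=\epsilon$ when $\epsilon=+1$. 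In either case $L\to\epsilon^2=1$.

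The crux is the simple vanishing of $x^2-4$ at $\beta_n$. I would obtain it from Lemma~\ref{Sn}(1): the trigonometric polynomial $S_n(y)-S_{n-1}(y)$ (in $\theta$) vanishes at $\beta_n$ but not on $[\theta_n,\beta_n)$, hence vanishes simply there, combined with the identities at $\beta_n$ established in the proof of Proposition~\ref{x} expressing $S_{n-2},S_{n-3},z_\pm,x_\pm$ through $\sqrt{2-y}$ precisely when $S_n-S_{n-1}=0$; tracking these through gives $x^2-4$ a simple zero. (Alternatively, the second step can be replaced by evaluating $\rho_\pm(\lambda)=\rho_\pm(a)^{-(4n+7)}\rho_\pm(w)\rho_\pm(a)\rho_\pm(w)^n\rho_\pm(a)\rho_\pm(w)^n\rho_\pm(a)\rho_\pm(w)$ directly at $\beta_n$, where $\rho_\pm(a)$ is an explicit parabolic and the $S_k(y)$ have closed forms, so that the sign of $L$ comes from a finite matrix computation.)
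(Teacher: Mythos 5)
Your strategy is genuinely different from the paper's. The paper expands $\rho(awaw^naw^naw)$ via Lemma \ref{power}, computes its trace as an explicit polynomial in $x,y,z$ and the $S_k(y)$, and evaluates at $\beta_n$ (using $x_{\pm}=\mp2$, $z_{\pm}=\pm(1-y)$, $S^2_{n-2}(y)=\frac{(1-y)^2}{2-y}$, etc.) to get trace $2$, whence $L+L^{-1}=2$ and $L=1$; this is essentially the ``alternative'' you mention in your final parenthetical. Your main line instead argues structurally that $L(\beta_n)\in\{1,-1\}$ and then pins the sign down by a limit as $\theta\to\beta_n^-$. The structural step is fine, the reduction to $\lim_{\theta\to\beta_n^-}L(\theta)=1$ via continuity of $\tr\rho_{\pm}(\lambda)$ is fine, and your values $C+D=0$, $C-D=2(3-y)$ for $\rho_+$ and $C-D=0$, $C+D=2(y-3)$ for $\rho_-$ at $\beta_n$ check out, as does the half-angle rewriting of $\frac{CM-D}{DM-C}$.

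However, the step you yourself call the crux --- that $x_{\pm}^2-4$ has a \emph{simple} zero at $\beta_n$ as a function of $\theta$ --- is asserted rather than proved, and without it the limit computation is inconclusive: if, say, $2-x_-$ vanished to order $2$, then $\sin(\phi_-/2)$ would vanish to order $1$, the same order as $C-D$, and the quotient could tend to any value of the form $(c+i)/(-c+i)$. Your sketch has two defects. First, the inference ``$S_n(2\cos\theta)-S_{n-1}(2\cos\theta)$ vanishes at $\beta_n$ but not on $[\theta_n,\beta_n)$, hence vanishes simply there'' is invalid --- an isolated zero need not be simple; the correct reason is that $S_n(y)-S_{n-1}(y)=\prod_{j=1}^{n}\big(y-2\cos \frac{(2j-1)\pi}{2n+1}\big)$ has only simple roots in $y$ and $y=2\cos\theta$ is a local diffeomorphism on $(0,\pi)$. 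Second, and more seriously, ``tracking through'' the identities from the proof of Proposition \ref{x} cannot yield an order of vanishing, because those identities are established only at the single point $\beta_n$. The claim is true and can be salvaged, for instance from the elimination used to prove $|x|\neq2$ on $[\theta_n,\beta_n)$: the resultant of $r^2-S_{n-2}(y)r-1$ and equation \eqref{eqn-x} equals, up to factors that do not vanish at $\beta_n$, the product $(x^2-4)(x'^2-4)S^4_{n-2}(y)$, where $x'$ corresponds to the other root of the $r$-quadratic; since this resultant is $(t^{2n-2}-1)(t^{2n+1}+1)(1+t^{2n-5}+\cdots+t^{4n-4})$, since only the factor $t^{2n+1}+1$ vanishes at $\beta_n$ (and simply), and since one checks $x'^2-4=y-3\neq0$ there, $x^2-4$ vanishes to order exactly one. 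Some such verification (or a direct computation of $\frac{d}{d\theta}x_{\pm}$ at $\beta_n$) must be supplied; as written, the proof is incomplete at its decisive point.
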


\begin{proof}
We compute the trace of $\rho(awaw^naw^naw)$ by applying Lemma \ref{power}. First, we have 
\begin{eqnarray*}
\rho(awaw^naw^naw) &=& S^2_{n-1}(y) \rho(awawawaw) + S^2_{n-2}(y) \rho(awa^3w) \\
&& - \, S_{n-1}(y) S_{n-2}(y) (\rho(awawa^2w) + \rho(awa^2waw) )
\end{eqnarray*}
Note that $\tr \rho ((aw)^k) = S_{k-1}(z) \tr \rho(aw) - S_{k-2}(z) \tr I= z S_{k-1}(z)  - 2 S_{k-2}(z)$. Hence
\begin{eqnarray*}
\tr \rho ((aw)^2) &=& z^2-2, \\
\tr \rho ((aw)^3) &=& z^3 - 3z, \\
\tr \rho ((aw)^4) &=& z^4 - 4z^2 + 2.
\end{eqnarray*}

By Lemma \ref{power} we have 
\begin{eqnarray*}
\tr \rho(awa^3w) &=& S_2(x) \tr \rho(awaw) - S_1(x) \tr \rho(aw^2) \\
&=& (x^2-1) (z^2-2) - x(y z - x),
\end{eqnarray*}
and 
\begin{eqnarray*}
\tr \rho(awawa^2w) &=& x \tr \rho(awawaw) - \tr \rho(awaw^2) \\
&=& x (z^3 - 3z) - (y \tr \rho(awaw)  - \rho(awa)) \\
&=& x (z^3 - 3z) - (y (z^2-2)  - (xz-y)).
\end{eqnarray*}
Similarly, $\tr  \rho(awa^2waw) = x (z^3 - 3z) - (y (z^2-2)  - (xz-y))$. Hence
\begin{eqnarray*}
\tr \rho(awaw^naw^naw) &=& S^2_{n-1}(y) (z^4 - 4z^2 + 2) + S^2_{n-2}(y) [(x^2-1) (z^2-2) - x(y z - x)]\\
&& - \, 2 S_{n-1}(y) S_{n-2}(y) [x (z^3 - 3z) - (y (z^2-2)  - (xz-y))].
\end{eqnarray*}

At $\theta= \beta_n$, by Proposition \ref{x}  we have $x_{\pm} = \mp 2$ and $z_{\pm} = \pm (1-y)$. Moreover, $S^2_{n-2}(y) = \frac{(y-1)^2}{2-y}$, $S^2_{n-1}(y) = \frac{1}{2-y}$ and $S_{n-1}(y) S_{n-2}(y) = \frac{1}{y-1} S^2_{n-2}(y) = \frac{y-1}{2-y}$. Then by a direct calculation we have $\tr \rho(awaw^naw^naw) = 2$. Hence $L = 1$ at $\theta = \beta_n$. 
\end{proof}

%[[At $\theta =  \frac{2\pi}{3}$ we have $y=-1$, $z^2 = 2+\sqrt{5}$ and $x=-\frac{1}{2}(\sqrt{5} -1)z$. Hence $C^2-D^2 = (7+3\sqrt{5})/2>0$.]]

\no{
It remains to consider the case $n=4$. Then $(\theta_n, \beta_n) \approx (2.2728, 2.44346)$ so $y \in (2\cos \beta_n, 2\cos\theta_n) \approx (-1.53209, -1.2915)$.  Equation \eqref{F} becomes
$$
-1 + 2 y + 8 y^2 - 7 y^3 - 10 y^4 + 9 y^5 + 5 y^6 - 5 y^7 - y^8 + y^9 =0.
$$
This equation has only one real solution $y \approx -1.31202$ in the interval $(-1.53209, -1.2915)$, which corresponds to $\theta = \delta \approx 2.28632$. ???

% At this root, we have $z  = \frac{S_{n-2}(y)} {S_{n-1}(y)} \approx 1.97361$ and  $x=1+z-z^2 \approx -0.921516$. Then $C=xy+z-x^2z \approx 1.50669$ and $D=1+y-xz \approx 1.50669$. 

%Note that at $\theta=\beta_n$ we have $y \approx -1.53209$, $r \approx 1.87939$, $z \approx \pm 2.53209$ and $x \approx -2$. Then $C \approx -4.53209$ and $D \approx 4.53209$. 

Note that at $\theta = 2.3$ we have $D^2-C^2 = 1.13479 > 0$. 
Hence $D^2 - C^2 <0$ on $(\theta_n, \delta) \approx (2.2728, 2.28632)$ and $D^2 - C^2 > 0$ on $(\delta, \beta_n) \approx (2.28632, 2.44346)$. 

%At $c = 2.28632$ we have $C \mp D=0$. Hence $\frac{CM - D}{DM - C} = \pm 1$. 
}

\no{
\subsection{$n \equiv 0 \pmod{3}$} If $x = x_- \in (0,2)$ then $\phi \in (0,\pi/2)$. We write $L = e^{i \varphi}$ where
$$
\varphi = \frac{(4n+6)\pi}{3}  - (4n+9) \phi + \arccos \frac{2CD - (C^2+D^2)\cos\phi}{C^2+D^2 - 2CD \cos\phi}.  
$$

At $\theta=\theta_n$ (reducible representation, so $L = 1$) we have $x_- = 2 \cos(\theta_n/2)$ (i.e. $\phi = \theta_n/2$) and $ \varphi =  \frac{(4n+6)\pi}{3}   - (4n+9)\theta_n/2 + \arccos[.]$. Since $\theta_n \in (\frac{2\pi}{3}-\frac{\pi}{6n+6}, \frac{2\pi}{3} + \frac{\pi}{6n+3})$ we have 
$$
\frac{(4n+6)\pi}{3}   - (4n+9) (\frac{\pi}{3} + \frac{\pi}{12n+6})   < \varphi(\theta_n) < \frac{(4n+6)\pi}{3}  - (4n+9)(\frac{\pi}{3} -\frac{\pi}{12n+12}) + \pi,
$$
i.e.
$$ - \pi - \frac{(4n+9)\pi}{12n+6} < \varphi(\theta_n) < \frac{(4n+9)\pi}{12n+12}.
$$
Since $\varphi(\theta_n) = 2k\pi$ for some $k \in \BZ$, this implies that $\varphi(\theta_n)=0$. 

As $\theta \to \beta_n$ we have $x_- \to 2$ (i.e. $\phi \to 0^+$) and so $\varphi \to \frac{(4n+6)\pi}{3}$. 

The image of $- \frac{\varphi}{\phi}$ contains $(-\infty,0)$. 

If $x = x_+ \in (-2,0)$ then $\phi \in (\pi/2, \pi)$. We write $L = e^{i \varphi}$ where
$$
\varphi =  \frac{(8n+18)\pi}{3}  - (4n+9) \phi + \arccos \frac{2CD - (C^2+D^2)\cos\phi}{C^2+D^2 - 2CD \cos\phi}.  
$$

At $\theta=\theta_n$ (reducible representation, so $L = 1$) we have $x_+ = - 2 \cos(\theta_n/2)$ (i.e. $\phi = \pi - \theta_n/2$) and $ \varphi = \frac{(8n+18)\pi}{3}    - (4n+9) (\pi - \theta_n/2) + \arccos[.]$. Since $\theta_n \in (\frac{2\pi}{3}-\frac{\pi}{6n+6}, \frac{2\pi}{3} + \frac{\pi}{6n+3})$ we have 
$$
\frac{(8n+18)\pi}{3}   - (4n+9) (\frac{2\pi}{3} +\frac{\pi}{12n+12}) < \varphi(\theta_n) < \frac{(8n+18)\pi}{3}  - (4n+9) (\frac{2\pi}{3} - \frac{\pi}{12n+6}) + \pi,
$$
i.e.
$$ - \frac{(4n+9)\pi}{12n+12}  < \varphi(\theta_n) <  \pi + \frac{(4n+9)\pi}{12n+6} .
$$
Since $\varphi(\theta_n) = 2k\pi$ for some $k \in \BZ$, this implies that $\varphi(\theta_n)=0$. 

As $\theta \to \beta_n$ we have $x_+ \to -2$ (i.e. $\phi \to \pi$) and so $\varphi \to \frac{(8n+18)\pi}{3}  - (4n+9) \pi  + \pi = -\frac{(4n+6)\pi}{3}$. 

The image of $- \frac{\varphi}{\phi}$ contains $(0, \frac{4n+6}{3})$. 

\subsection{$n \equiv 1 \pmod{3}$} If $x = x_- \in (0,2)$ then $\phi \in (0,\pi/2)$. We write $L = e^{i \varphi}$ where
$$
\varphi =  \frac{(4n+8)\pi}{3}  - (4n+9) \phi + \arccos \frac{2CD - (C^2+D^2)\cos\phi}{C^2+D^2 - 2CD \cos\phi}.  
$$

At $\theta=\theta_n$ (reducible representation, so $L = 1$) we have $x_- = 2 \cos(\theta_n/2)$ (i.e. $\phi = \theta_n/2$) and $ \varphi =  \frac{(4n+8)\pi}{3}  - (4n+9)\theta_n/2 + \arccos[.]$. Since $\theta_n \in (\frac{2\pi}{3} + \frac{\pi}{6n+6}, \frac{2\pi}{3} +  \frac{\pi}{2n+1})$ we have 
$$
\frac{(4n+8)\pi}{3} - (4n+9) (\frac{\pi}{3} + \frac{\pi}{4n+2})  < \varphi(\theta_n) < \frac{(4n+8)\pi}{3} - (4n+9)(\frac{\pi}{3} +\frac{\pi}{12n+12}) + \pi,
$$
i.e.
$$ - \frac{\pi}{3} - \frac{(4n+9)\pi}{4n+2} < \varphi(\theta_n) < \frac{2\pi}{3} - \frac{(4n+9)\pi}{12n+12}.
$$
Since $\varphi(\theta_n) = 2k\pi$ for some $k \in \BZ$, this implies that $\varphi(\theta_n)=0$. 

As $\theta \to \beta_n$ we have $x_- \to 2$ (i.e. $\phi \to 0^+$) and so  $\varphi \to \frac{(4n+8)\pi}{3}$. 

The image of $- \frac{\varphi}{\phi}$ contains $(-\infty,0)$. 

If $x = x_+ \in (-2,0)$ then $\phi \in (\pi/2, \pi)$. We write $L = e^{i \varphi}$ where
$$
\varphi =  \frac{(8n+16)\pi}{3} - (4n+9) \phi + \arccos \frac{2CD - (C^2+D^2)\cos\phi}{C^2+D^2 - 2CD \cos\phi}.  
$$

At $\theta=\theta_n$ (reducible representation, so $L = 1$) we have $x_+ = - 2 \cos(\theta_n/2)$ (i.e. $\phi = \pi - \theta_n/2$) and $ \varphi = \frac{(8n+16)\pi}{3}  - (4n+9) (\pi - \theta_n/2) + \arccos[.]$. Since $\theta_n \in (\frac{2\pi}{3} + \frac{\pi}{6n+6}, \frac{2\pi}{3} +  \frac{\pi}{2n+1})$ we have 
$$
\frac{(8n+16)\pi}{3}   - (4n+9) (\frac{2\pi}{3} -\frac{\pi}{12n+12}) < \varphi(\theta_n) < \frac{(8n+16)\pi}{3}  - (4n+9) (\frac{2\pi}{3} - \frac{\pi}{4n+2}) + \pi,
$$
i.e.
$$ - \frac{2\pi}{3} + \frac{(4n+9)\pi}{12n+12}  < \varphi(\theta_n) <  \frac{\pi}{3}+ \frac{(4n+9)\pi}{4n+2} .
$$
Since $\varphi(\theta_n) = 2k\pi$ for some $k \in \BZ$, this implies that $\varphi(\theta_n)=0$. 

As $\theta \to \beta_n$ we have $x_+ \to -2$ (i.e. $\phi \to \pi$) and so $\varphi \to  \frac{(8n+16)\pi}{3}   - (4n+9) \pi  + \pi = -\frac{(4n+8)\pi}{3}$. 

The image of $- \frac{\varphi}{\phi}$ contains $(0, \frac{4n+8}{3})$. 

\subsection{$n \equiv 2 \pmod{3}$} If $x = x_- \in (0,2)$ then $\phi \in (0,\pi/2)$. We write $L = e^{i \varphi}$ where
$$
\varphi =  \frac{(4n+4)\pi}{3}  - (4n+9) \phi + \arccos \frac{2CD - (C^2+D^2)\cos\phi}{C^2+D^2 - 2CD \cos\phi}.  
$$

At $\theta=\theta_n$ (reducible representation, so $L = 1$) we have $x_- = 2 \cos(\theta_n/2)$ (i.e. $\phi = \theta_n/2$) and $ \varphi =  \frac{(4n+4)\pi}{3}  - (4n+9)\theta_n/2 + \arccos[.]$. Since $\theta_n \in (\frac{2\pi}{3} - \frac{\pi}{2n+2}, \frac{2\pi}{3} -  \frac{\pi}{6n+3})$ we have 
$$
\frac{(4n+4)\pi}{3} - (4n+9) (\frac{\pi}{3} - \frac{\pi}{12n+6})  < \varphi(\theta_n) < \frac{(4n+4)\pi}{3} - (4n+9)(\frac{\pi}{3} -\frac{\pi}{4n+4}) + \pi,
$$
i.e.
$$ -\frac{5\pi}{3} + \frac{(4n+9)\pi}{12n+6} < \varphi(\theta_n) < - \frac{2\pi}{3} + \frac{(4n+9)\pi}{4n+4}.
$$
Since $\varphi(\theta_n) = 2k\pi$ for some $k \in \BZ$, this implies that $\varphi(\theta_n)=0$. 

As $\theta \to \beta_n$ we have $x_- \to 2$ (i.e. $\phi \to 0^+$) and so  $\varphi \to \frac{(4n+4)\pi}{3}$. 

The image of $- \frac{\varphi}{\phi}$ contains $(-\infty,0)$. 

If $x = x_+ \in (-2,0)$ then $\phi \in (\pi/2, \pi)$. We write $L = e^{i \varphi}$ where
$$
\varphi =  \frac{(8n+14)\pi}{3} - (4n+9) \phi + \arccos \frac{2CD - (C^2+D^2)\cos\phi}{C^2+D^2 - 2CD \cos\phi}.  
$$

At $\theta=\theta_n$ (reducible representation, so $L = 1$) we have $x_+ = - 2 \cos(\theta_n/2)$ (i.e. $\phi = \pi - \theta_n/2$) and $ \varphi = \frac{(8n+20)\pi}{3}  - (4n+9) (\pi - \theta_n/2) + \arccos[.]$. Since $\theta_n \in (\frac{2\pi}{3} - \frac{\pi}{2n+2}, \frac{2\pi}{3} -  \frac{\pi}{6n+3})$ we have 
$$
\frac{(8n+20)\pi}{3}   - (4n+9) (\frac{2\pi}{3} + \frac{\pi}{4n+4}) < \varphi(\theta_n) < \frac{(8n+20)\pi}{3}  - (4n+9) (\frac{2\pi}{3} + \frac{\pi}{12n+6}) + \pi,
$$
i.e.
$$ \frac{2\pi}{3} - \frac{(4n+9)\pi}{4n+4}  < \varphi(\theta_n) <  \frac{5\pi}{3}- \frac{(4n+9)\pi}{12n+6} .
$$
Since $\varphi(\theta_n) = 2k\pi$ for some $k \in \BZ$, this implies that $\varphi(\theta_n)=0$. 

As $\theta \to \beta_n$ we have $x_+ \to -2$ (i.e. $\phi \to \pi$) and so $\varphi \to  \frac{(8n+20)\pi}{3}   - (4n+9) \pi  + \pi = -\frac{(4n+4)\pi}{3}$. 

The image of $- \frac{\varphi}{\phi}$ contains $(0, \frac{4n+4}{3})$. 
}

\no{
\section{The case $n=4$}

\subsection{Sign} If $n=4$, then $\theta_n = 2.2728$ so $y \in (2\cos(\frac{2\pi}{3} + \frac{\pi}{9}), 2\cos\theta_n) = (-1.53209, -1.2915)$.  At $\theta = \theta_n$ we have $y = -1.2915$ and 
$$
D^2-C^2 = - y^5 + y^4 + 5 y^3 - 6 y^2 - 5 y + 7.
$$
The real roots are $-1.96757, \, -1.22062$ and  $1.66787$. Hence  $D^2-C^2 < 0$. 

Assume that $D^2-C^2=0$ for some $\theta \in [\theta_n, \beta_n)$. Then 
$$
P(\theta) := S^2_{n-1}(y) + ( S_{n-1}(y) - S_{n-2}(y) ) S^3_{n-2}(y) = 0.
$$
Note that $P(y) = -1 + 2 y + 8 y^2 - 7 y^3 - 10 y^4 + 9 y^5 + 5 y^6 - 5 y^7 - y^8 + y^9$ has only one real root $y = -1.31202$ in the interval $(-1.53209, -1.2915)$, which corresponds to $c \approx 2.28632$. At this root, we have $z  = \frac{S_{n-2}(y)} {S_{n-1}(y)} \approx 1.97361$ and  $x=1+z-z^2 \approx -0.921516$. Then $C=xy+z-x^2z \approx 1.50669$ and $D=1+y-xz \approx 1.50669$. 

Note that at $\theta=\beta_n$ we have $y = -1.53209$, $u = 1.87939$, $z = \pm 2.53209$ and $x = -2$. Then $C = -4.53209$ and $D = 4.53209$. 

At $\theta = 2.3$ we have $D^2-C^2 = 1.13479 > 0$. 

Conclusion: on $(\theta_n = 2.2728, c = 2.28632)$ we have $D^2 - C^2 <0$. On $(c = 2.28632, \beta_n = 2.44346)$ we have $D^2 - C^2 >0$. 

At $c = 2.28632$ we have $C \mp D=0$. Hence $\frac{CM - D}{DM - C} = \pm 1$. 

\subsection{$x_-$} Since $x = x_- \in (0,2)$ we have $\phi \in (0,\pi/2)$. 

If $\theta \in (\theta_n, c)$, we write $L = e^{i \varphi}$ where
$$
\varphi =  \frac{(4n+14)\pi}{3}  - (4n+9) \phi - \arccos \frac{2CD - (C^2+D^2)\cos\phi}{C^2+D^2 - 2CD \cos\phi}.  
$$

If $\theta \in (c, b)$, we write $L = e^{i \varphi}$ where
$$
\varphi =   \frac{(4n+8)\pi}{3}  - (4n+9) \phi + \arccos \frac{2CD - (C^2+D^2)\cos\phi}{C^2+D^2 - 2CD \cos\phi}.  
$$

At $\theta=\theta_n$ (reducible representation, so $L = 1$) we have $x_- = 2 \cos(\theta_n/2)$ (i.e. $\phi = \theta_n/2$) and $ \varphi =  \frac{(4n+14)\pi}{3}  - (4n+9)\theta_n/2 + \arccos[.]$. Since $\theta_n \in (\frac{2\pi}{3} + \frac{\pi}{6n+6}, \frac{2\pi}{3} +  \frac{\pi}{2n+1})$ we have 
$$
\frac{(4n+14)\pi}{3} - (4n+9) (\frac{\pi}{3} + \frac{\pi}{4n+2}) - \pi < \varphi(\theta_n) < \frac{(4n+14)\pi}{3} - (4n+9)(\frac{\pi}{3} +\frac{\pi}{12n+12}),
$$
i.e.
$$ \frac{2\pi}{3} - \frac{(4n+9)\pi}{4n+2} < \varphi(\theta_n) < \frac{5\pi}{3} - \frac{(4n+9)\pi}{12n+12}.
$$
Since $\varphi(\theta_n) = 2k\pi$ for some $k \in \BZ$, this implies that $\varphi(\theta_n)=0$. 

As $\theta \to c^{-}$ we have $x_- \to 0.921516$ (i.e. $\phi \to \phi_0 = 1.09195$) and so  $\varphi \to \frac{(4n+14)\pi}{3}  - (4n+9) \phi_0 - \pi$. 

As $\theta \to c^{+}$ we have $x_- \to 0.921516$ (i.e. $\phi \to \phi_0 = 1.09195$) and so  $\varphi \to \frac{(4n+8)\pi}{3} - (4n+9) \phi_0 +\pi$. 

As $\theta \to \beta_n$ we have $x_- \to 2$ (i.e. $\phi \to 0^+$) and so  $\varphi \to \frac{(4n+8)\pi}{3}$. 

The image of $- \frac{\varphi}{\phi}$ contains $(-\infty,0) \setminus \{ 4n+9 - \frac{(4n+11)\pi}{3 \phi_0} \}$. 

\subsection{$x_+$} Since $x = x_+ \in (-2,0)$ we have $\phi \in (\pi/2, \pi)$.

If $\theta \in (\theta_n, c)$, we write $L = e^{i \varphi}$ where
$$
\varphi =  \frac{(8n+16)\pi}{3} - (4n+9) \phi - \arccos \frac{2CD - (C^2+D^2)\cos\phi}{C^2+D^2 - 2CD \cos\phi}.  
$$

If $\theta \in (c, b)$, we write $L = e^{i \varphi}$ where
$$
\varphi =  \frac{(8n+16)\pi}{3} - (4n+9) \phi + \arccos \frac{2CD - (C^2+D^2)\cos\phi}{C^2+D^2 - 2CD \cos\phi}.  
$$

At $\theta=\theta_n$ (reducible representation, so $L = 1$) we have $x_+ = - 2 \cos(\theta_n/2)$ (i.e. $\phi = \pi - \theta_n/2$) and $ \varphi = \frac{(8n+16)\pi}{3}  - (4n+9) (\pi - \theta_n/2) + \arccos[.]$. Since $\theta_n \in (\frac{2\pi}{3} + \frac{\pi}{6n+6}, \frac{2\pi}{3} +  \frac{\pi}{2n+1})$ we have 
$$
\frac{(8n+16)\pi}{3}   - (4n+9) (\frac{2\pi}{3} -\frac{\pi}{12n+12}) - \pi< \varphi(\theta_n) < \frac{(8n+16)\pi}{3}  - (4n+9) (\frac{2\pi}{3} - \frac{\pi}{4n+2}),
$$
i.e.
$$ - \frac{5\pi}{3} + \frac{(4n+9)\pi}{12n+12}  < \varphi(\theta_n) <  -\frac{2\pi}{3}+ \frac{(4n+9)\pi}{4n+2} .
$$
Since $\varphi(\theta_n) = 2k\pi$ for some $k \in \BZ$, this implies that $\varphi(\theta_n)=0$. 

As $\theta \to c^{-}$ we have $x_+ \to - 0.921516$ (i.e. $\phi \to \pi - \phi_0$) and so $\varphi \to  \frac{(8n+16)\pi}{3}   - (4n+9) (\pi - \phi_0)$. 

As $\theta \to c^{+}$ we have $x_+ \to - 0.921516$ (i.e. $\phi \to \pi - \phi_0$) and so $\varphi \to  \frac{(8n+16)\pi}{3} - (4n+9) (\pi - \phi_0)$. 

As $\theta \to \beta_n$ we have $x_+ \to -2$ (i.e. $\phi \to \pi$) and so $\varphi \to  \frac{(8n+16)\pi}{3}   - (4n+9) \pi  + \pi = -\frac{(4n+8)\pi}{3}$. 

The image of $- \frac{\varphi}{\phi}$ contains $(0, \frac{4n+8}{3}) \setminus \{4n+9 - \frac{(8n+16)\pi}{3(\pi - \phi_0)} \}$. 

Claim: $\frac{\phi_0}{\pi} \not\in \BQ$. 

Proof: Recall that $P(y) = -1 + 2 y + 8 y^2 - 7 y^3 - 10 y^4 + 9 y^5 + 5 y^6 - 5 y^7 - y^8 + y^9$ has only one real root $y = -1.31202$ in the interval $(-1.53209, -1.2915)$, which corresponds to $c \approx 2.28632$. At this root, we have $z  = \frac{S_{n-2}(y)} {S_{n-1}(y)} \approx  1.97361$ and  $x =1+z-z^2 \approx \mp - 0.921516$. Note that $x$ satisfies $Q(x) : = 1 + 5  x - 6  x^2 - 15  x^3 + 14  x^4 + 10  x^5 - 13  x^6 + x^7 + 3  x^8 - x^9 = 0$.

If $\phi_0 = \frac{p \pi}{q}$, then $x = 2 \cos \frac{p \pi}{q}$. The minimal polynomial of $2 \cos \frac{p \pi}{q}$ is the cyclotomic polynomial $\Phi_q(x)$. So $\Phi_q(x) \mid Q(x)$. Note that $Q(x)$ is not palindromic, so it is not a  cyclotomic polynomial.

The degree of $\Phi_q(x)$ is $d=\varphi(q) = q_1^{r_1-1} \cdots q_k^{r_k-1} (q_1-1) \cdots (q_k-1) < 9$. Then $q = 1, 2, 3, 4, 5, 6, 7, 8, 9, 10, 12, 14, 15, 16, 18, 20, 24, 30$. None of them divides $Q(x)$ over $\BQ$. 
}

\section{Left-orderable surgeries} \label{LO}

In this section we will use continuous paths of irreducible $\mathrm{SL}_2(\BR)$-representations constructed in Section \ref{path} to prove Theorem \ref{main}. 

\subsection{Elliptic $\bm{\mathrm{SL}_2(\BR)}$-representations} 

An element of $\mathrm{SL}_2(\BR)$ is elliptic if its trace is a real number in $(-2,2)$. A representation $\rho: \BZ^2 \to \mathrm{SL}_2(\BR)$ is called elliptic if the image group $\rho(\BZ^2)$ contains an elliptic element of $\mathrm{SL}_2(\BR)$. In which case, since $\BZ^2$ is an abelian group, every non-trivial element of $\rho(\BZ^2)$ must also be elliptic.

Recall from the previous section that for the representation $\rho_{\pm} (\theta): G(\CK_n) \to \mathrm{SL}_2(\BC)$, with $\theta \in [\theta_n, \beta_n)$, we have $L = M^{-(4n+9)} \frac{CM - D}{DM - C}$. Let $\phi_{\pm} = \arccos (x_{\pm}/2) \in (0, \pi)$. Then we can take $M = e^{i\phi_{\pm}}$. Note that $|L |= 1$. We have 
\begin{eqnarray*}
\mathrm{Re} \, \frac{CM - D}{DM - C} &=& \frac{2CD - (C^2+D^2)\cos\phi_{\pm} } {C^2+D^2 - 2CD \cos\phi_{\pm} }, \\
\mathrm{Im} \, \frac{CM - D}{DM - C} &=& \frac{(D^2 - C^2) \sin \phi_{\pm}} {C^2+D^2 - 2CD \cos\phi_{\pm} } > 0,
\end{eqnarray*}
since $D^2-C^2 > 0$ for $\theta \in [\theta_n, \beta_n)$. 
Hence we can write $L = e^{i \varphi_{\pm}}$ where
\begin{eqnarray*}
\varphi_{-} &=& 2 \left\lfloor \frac{2n+4}{3} \right\rfloor\pi  - (4n+9) \phi_{-} + \arccos \frac{2CD - (C^2+D^2)\cos\phi_{-}}{C^2+D^2 - 2CD \cos\phi_{-}}, \\
\varphi_{\pm} &=& (4n+8)\pi - 2 \left\lfloor \frac{2n+4}{3} \right\rfloor\pi  - (4n+9) \phi_{+} + \arccos \frac{2CD - (C^2+D^2)\cos\phi_{+}}{C^2+D^2 - 2CD \cos\phi_{+}}.
\end{eqnarray*}

\begin{lemma} \label{angle}
The following hold true.

$(1)$ $\varphi_{\pm}(\theta_n)=0$.

$(2)$ As $\theta \to \beta_n$, we have $\phi_+ \to \pi$, $\phi_- \to 0$ and $\varphi_{\pm} \to \mp 2 \left\lfloor \frac{2n+4}{3} \right\rfloor\pi$. 
\end{lemma}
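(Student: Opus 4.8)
The plan is to deduce both parts from two facts already in the excerpt — that $L=1$ at $\theta=\theta_n$ (verified just before the statement, using $\Delta_{\CK_n}(e^{i\theta_n})=0$) and that $L=1$ at $\theta=\beta_n$ (Proposition \ref{L}) — together with the explicit formula $L=e^{i\varphi_\pm}$ and the known location of $\theta_n$ and of $\phi_\pm$ at the two endpoints. Since $\varphi_\pm$ is a continuous, real-valued function on $[\theta_n,\beta_n)$ with $e^{i\varphi_\pm}=L$, at either endpoint $e^{i\varphi_\pm}\to 1$, so $\varphi_\pm$ approaches a multiple of $2\pi$; the entire content of the lemma is to identify which multiple, and in each case I would do this by squeezing $\varphi_\pm$ into an interval that meets $2\pi\BZ$ in a single point, using only $\arccos[\,\cdot\,]\in[0,\pi]$.

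For $(1)$, at $\theta=\theta_n$ we have $s=0$ by Proposition \ref{s}; then $Q=0$ gives $z=(y-1)x$ and $s=0$ becomes $(2-y)(x^2-y-2)=0$, so $x^2=y+2=4\cos^2(\theta_n/2)$ and hence $x_\pm=\mp2\cos(\theta_n/2)$, i.e. $\phi_-=\theta_n/2$ and $\phi_+=\pi-\theta_n/2$. Substituting into the displayed formulas for $\varphi_\pm$ — and using that at $\theta_n$ the term $\arccos[\,\cdot\,]$ lies in $(0,\pi)$, because the imaginary part of $\tfrac{CM-D}{DM-C}$ is positive by Proposition \ref{4} — makes $\varphi_\pm(\theta_n)$ the sum of a term in $(0,\pi)$ and an affine function of $\theta_n$ whose value at $\theta_n=\tfrac{2\pi}{3}$ is $-\pi,-\tfrac{\pi}{3},-\tfrac{5\pi}{3}$ for $\varphi_-$ (respectively $0,-\tfrac{2\pi}{3},\tfrac{2\pi}{3}$ for $\varphi_+$) according as $n\equiv 0,1,2\pmod 3$; this is exactly where the choice of the constant $\lfloor\tfrac{2n+4}{3}\rfloor$ is forced. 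Feeding in $\theta_n\in[\alpha_n,\gamma_n)$ from Proposition \ref{root} and checking in each residue class that $\tfrac{4n+9}{2}\,|\theta_n-\tfrac{2\pi}{3}|<\pi$ for all $n\ge 3$ (so the affine term cannot move far from its value at $\tfrac{2\pi}{3}$, and moves in a definite direction), a short case computation gives $\varphi_\pm(\theta_n)\in(-2\pi,2\pi)$ in every case; since $e^{i\varphi_\pm(\theta_n)}=L(\theta_n)=1$ forces $\varphi_\pm(\theta_n)\in 2\pi\BZ$, it follows that $\varphi_\pm(\theta_n)=0$.

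For $(2)$, Proposition \ref{x} gives $x_\pm\to\mp2$ as $\theta\to\beta_n$, hence $\phi_+=\arccos(x_+/2)\to\pi$ and $\phi_-=\arccos(x_-/2)\to 0$. Using $\arccos[\,\cdot\,]\in[0,\pi]$ in the formulas, for every $\varepsilon>0$ the function $\varphi_-$ is eventually trapped in $(2\lfloor\tfrac{2n+4}{3}\rfloor\pi-\varepsilon,\;2\lfloor\tfrac{2n+4}{3}\rfloor\pi+\pi+\varepsilon)$ and $\varphi_+$ eventually in $(-2\lfloor\tfrac{2n+4}{3}\rfloor\pi-\pi-\varepsilon,\;-2\lfloor\tfrac{2n+4}{3}\rfloor\pi+\varepsilon)$, the passage from $(4n+8)\pi$ to $-\pi$ in the $\varphi_+$ case coming from $-(4n+9)\phi_+\to-(4n+9)\pi$. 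On the other hand $L(\theta)\to 1$ as $\theta\to\beta_n$ by Proposition \ref{L} (the longitude eigenvalue extends continuously across $\beta_n$, where the rational formula for $L$ degenerates to $\tfrac00$), so $\varphi_\pm\bmod 2\pi\to 0$. For small $\varepsilon$ each interval above meets $2\pi\BZ$ only in $\mp2\lfloor\tfrac{2n+4}{3}\rfloor\pi$, so $\varphi_\pm\to\mp2\lfloor\tfrac{2n+4}{3}\rfloor\pi$.

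The main obstacle is the arithmetic bookkeeping in $(1)$: one must verify, uniformly in $n$ and over all three residue classes mod $3$, that the interval for $\theta_n$ supplied by Proposition \ref{root} is narrow enough that the affine term cannot drift far enough to admit a nonzero multiple of $2\pi$ — this is precisely where the constant $2\lfloor\tfrac{2n+4}{3}\rfloor$ enters, and the reason the argument stops short of $2g(\CK_n)-1$. A lesser point in $(2)$ is the continuity of $L$ at $\beta_n$, which the rational formula does not see directly but which is exactly what Proposition \ref{L} provides, so it may simply be cited.
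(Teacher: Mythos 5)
Your proposal is correct and follows essentially the same route as the paper: for $(1)$ both arguments compute $\phi_\pm(\theta_n)$ from $s=0$, use $L(\theta_n)=1$ to force $\varphi_\pm(\theta_n)\in 2\pi\BZ$, and squeeze $\varphi_\pm(\theta_n)$ via the bounds $\theta_n\in[\alpha_n,\gamma_n)$ and $\arccos[\,\cdot\,]\in[0,\pi]$ into an interval meeting $2\pi\BZ$ only at $0$. For $(2)$ the paper instead evaluates the limit of the $\arccos$ term directly (from $\tfrac{CM-D}{DM-C}\to\mp1$, which follows from $L\to1$ and $M^{4n+9}\to\mp1$), whereas you bound it in $[0,\pi]$ and let $L\to1$ pin down the multiple of $2\pi$ — a cosmetic difference that yields the same conclusion by the same continuity argument.
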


\begin{proof}
$(1)$ As $\theta = \theta_n$ we have $x_{\pm} = \mp 2 \cos(\theta_n/2)$, so $\phi_- = \theta_n/2$ and $\phi_+ = \pi - \theta_n/2$. Note that $L(\theta_n)=1$, so $\varphi_{\pm}(\theta_n) \in 2\pi \BZ$. 

Suppose $n \equiv 0 \pmod{3}$. Since $\theta_n \in (\alpha_n, \gamma_n) = (\frac{2\pi}{3}-\frac{\pi}{6n+15}, \frac{2\pi}{3})$ we have 
$$
\frac{(4n+6)\pi}{3}   - (4n+9) \frac{\pi}{3}    < \varphi_-(\theta_n) < \frac{(4n+6)\pi}{3}  - (4n+9)(\frac{\pi}{3} -\frac{\pi}{12n+30}) + \pi,
$$
i.e.
$$ - \pi  < \varphi_-(\theta_n) < \frac{(4n+9)\pi}{12n+30}.
$$
Similarly, we also have $ - \frac{(4n+9)\pi}{12n+30}  < \varphi_+(\theta_n) <  \pi$. 

Since $\varphi_{\pm}(\theta_n) \in 2\pi \BZ$, we obtain $\varphi_{\pm}(\theta_n)=0$. The cases $n \equiv \pm 1 \pmod{3}$ are similar. 

$(2)$ As $\theta \to \beta_n$, we have $x_{\pm} \to \mp 2$ and $L \to 1$ (by Proposition \ref{L}). Then $\phi_+ \to \pi$, $\phi_- \to 0$ and $\frac{CM - D}{DM - C} \to \mp 1$. This implies that $\arccos \frac{2CD - (C^2+D^2)\cos\phi_{+}}{C^2+D^2 - 2CD \cos\phi_{+}} \to \pi$ and $ \arccos \frac{2CD - (C^2+D^2)\cos\phi_{-}}{C^2+D^2 - 2CD \cos\phi_{-}} \to 0$.  Hence $\varphi_{\pm} \to \mp 2 \left\lfloor \frac{2n+4}{3} \right\rfloor\pi$.
\end{proof}

Let $X$ be the complement of an open tubular neighborhood of the $(-2,3,2n+1)$-pretzel knot $\CK_n$ in $S^3$, where $n \ge 3$ is an integer and $n \not= 4$. 

Let $X_{\frac{m}{l}}$ be the 3-manifold obtained from $S^3$ by $\frac{m}{l}$-surgery on $\CK_n$.

\begin{proposition} \label{prop} 
For each rational number $\frac{m}{l}\in(-\infty, 0) \cup (0,2 \left\lfloor \frac{2n+4}{3} \right\rfloor)$ there exists a  representation $\rho: \pi_1(X_{\frac{m}{l}}) \to \mathrm{SL}_2(\BR)$ such that $\rho|_{\pi_1(\partial X)}: \pi_1(\partial X) \cong \BZ^2 \to \mathrm{SL}_2(\BR)$ is an elliptic representation. 
\end{proposition}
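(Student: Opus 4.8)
The plan is to produce the required representation by running the intermediate value theorem on the peripheral data of the continuous families $\rho_{\pm}(\theta)$, $\theta\in(\theta_n,\beta_n)$, constructed in Section \ref{path}. Recall that on this interval each $\rho_{\pm}(\theta)\colon G(\CK_n)=\pi_1(X)\to\mathrm{SL}_2(\BR)$ is irreducible; that $\rho_{\pm}(\mu)$ has eigenvalue $M=e^{i\phi_{\pm}}$ with $\phi_-=\arccos(x_-/2)\in(0,\pi/2)$ and $\phi_+=\arccos(x_+/2)\in(\pi/2,\pi)$ (using $x_-\in(0,2)$ and $x_+\in(-2,0)$ on $[\theta_n,\beta_n)$ from the last proposition of Section \ref{path}); and that $\rho_{\pm}(\lambda)$ has eigenvalue $L=M^{-(4n+9)}\frac{CM-D}{DM-C}=e^{i\varphi_{\pm}}$, where $\varphi_{\pm}(\theta)$ is the function displayed just before Lemma \ref{angle}. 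Here Proposition \ref{4}, namely $D^2-C^2>0$ on $[\theta_n,\beta_n)$, is precisely what guarantees $\mathrm{Im}\,\frac{CM-D}{DM-C}>0$, so that the displayed formula is a genuine continuous branch of $\arg L$ on $[\theta_n,\beta_n)$.

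First I would record the effect of the surgery relator. Since $\rho_{\pm}(\mu)$ is elliptic and $\ne\pm I$ (its trace $x_{\pm}$ lies in $(-2,2)$), its centralizer in $\mathrm{SL}_2(\BR)$ is the maximal torus containing it; as $\rho_{\pm}(\lambda)$ commutes with $\rho_{\pm}(\mu)$ it lies in this torus, and in a basis diagonalizing $\rho_{\pm}(\mu)=\mathrm{diag}(M,M^{-1})$ over $\BC$ we have $\rho_{\pm}(\lambda)=\mathrm{diag}(L,L^{-1})$, where $L$ is exactly the quantity above (the first basis vector spans the $M$-eigenline of $\rho_{\pm}(\mu)$, on which $\rho_{\pm}(\lambda)$ acts by $L$). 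Consequently, for integers $m,l$,
\[
\rho_{\pm}(\mu^{m}\lambda^{l})=\mathrm{diag}\!\left(M^{m}L^{l},\,M^{-m}L^{-l}\right)=\mathrm{diag}\!\left(e^{i(m\phi_{\pm}+l\varphi_{\pm})},\,e^{-i(m\phi_{\pm}+l\varphi_{\pm})}\right),
\]
and since this is diagonal in the chosen basis it equals $I$ exactly when $M^{m}L^{l}=1$, i.e. $m\phi_{\pm}(\theta)+l\varphi_{\pm}(\theta)\in2\pi\BZ$. In particular it suffices to arrange $m\phi_{\pm}(\theta)+l\varphi_{\pm}(\theta)=0$, equivalently $\frac{m}{l}=-\varphi_{\pm}(\theta)/\phi_{\pm}(\theta)$.

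Next I would apply the intermediate value theorem to $\psi_{\pm}(\theta):=-\varphi_{\pm}(\theta)/\phi_{\pm}(\theta)$, which is continuous and well defined on $[\theta_n,\beta_n)$ since $\phi_{\pm}$ is continuous and never vanishes there. By Lemma \ref{angle}(1) we have $\psi_{\pm}(\theta_n)=0$, while by Lemma \ref{angle}(2) (which uses Proposition \ref{L}) as $\theta\to\beta_n$ we get $\phi_-\to0^{+}$, $\varphi_-\to2\lfloor\frac{2n+4}{3}\rfloor\pi$, hence $\psi_-\to-\infty$, and $\phi_+\to\pi$, $\varphi_+\to-2\lfloor\frac{2n+4}{3}\rfloor\pi$, hence $\psi_+\to2\lfloor\frac{2n+4}{3}\rfloor$. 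Therefore the image of $\psi_-$ contains $(-\infty,0)$ and the image of $\psi_+$ contains $(0,2\lfloor\frac{2n+4}{3}\rfloor)$. Given $\frac{m}{l}$ in the stated set, choose the sign $\epsilon\in\{+,-\}$ accordingly and $\theta\in(\theta_n,\beta_n)$ with $\psi_{\epsilon}(\theta)=\frac{m}{l}$; then $m\phi_{\epsilon}(\theta)+l\varphi_{\epsilon}(\theta)=0$, so $\rho:=\rho_{\epsilon}(\theta)$ kills $\mu^{m}\lambda^{l}$ and hence descends to a representation $\pi_1(X_{m/l})\to\mathrm{SL}_2(\BR)$. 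Its restriction to $\pi_1(\partial X)$ sends $\mu$ to the elliptic element $\rho(\mu)$ of trace $x_{\epsilon}\in(-2,2)$, so the abelian group $\rho(\pi_1(\partial X))$ contains an elliptic element and $\rho|_{\pi_1(\partial X)}$ is elliptic.

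The genuine obstacles all lie in the results being invoked rather than in this argument: Proposition \ref{4} ($D^2-C^2>0$ on $[\theta_n,\beta_n)$), which underlies both the continuity of $\varphi_{\pm}$ and the endpoint behaviour in Lemma \ref{angle}, and which fails for $n=4$ by Remark \ref{Rem}; and Proposition \ref{L} together with Lemma \ref{angle}, which pin down the limits $\varphi_{\pm}\to\mp2\lfloor\frac{2n+4}{3}\rfloor\pi$ and hence fix the exact length $2\lfloor\frac{2n+4}{3}\rfloor$ of the interval of realizable positive slopes. Once these are in hand, the remaining work for this proposition is only the bookkeeping of branches and signs and a one-line application of the intermediate value theorem.
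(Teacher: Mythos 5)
Your proposal is correct and takes essentially the same route as the paper: both apply the intermediate value theorem to $-\varphi_{\pm}/\phi_{\pm}$ on $(\theta_n,\beta_n)$, using Lemma \ref{angle} for the endpoint values $0$ and $\mp 2\lfloor\frac{2n+4}{3}\rfloor\pi$, to realize every slope in $(-\infty,0)\cup(0,2\lfloor\frac{2n+4}{3}\rfloor)$ as $-\varphi_{\epsilon}(\theta)/\phi_{\epsilon}(\theta)$ and then kill $\mu^m\lambda^l$. Your extra justification that $M^mL^l=1$ forces $\rho(\mu^m\lambda^l)=I$ (via simultaneous diagonalization of the peripheral subgroup) is a detail the paper leaves implicit, but the argument is the same.
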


\begin{proof}
First, Lemma \ref{angle} implies that the image of the continuous function $- \frac{\varphi_-}{\phi_-}$ on $(\theta_n, \beta_n)$ contains $(-\infty,0)$ and that of $- \frac{\varphi_+}{\phi_+}$  contains $\left( 0, 2 \left\lfloor \frac{2n+4}{3} \right\rfloor \right)$. 

Suppose $\frac{m}{l}\in  (-\infty,0) \cap \BQ$. Then $\frac{m}{l}= - \frac{\varphi_-(\theta)}{\phi_-(\theta)}$ for some $\theta \in (\theta_n, \beta_n)$. Since $M^m L^l = e^{i(m\theta_- + l\varphi_-)} = 1$, we have $\rho_-(\mu^m \lambda^l) = I$. Let $\rho: G(\CK_n) \to \mathrm{SL}_2(\BR)$ be an $\mathrm{SL}_2(\BR)$ representation conjugate to $\rho_-$. Then we also have $\rho(\mu^m \lambda^l) = I$. This means that $\rho$ extends to a  representation $\rho: \pi_1(X_{\frac{m}{l}}) \to \mathrm{SL}_2(\BR)$. Note that $\rho|_{\pi_1(\partial X)}$ is an elliptic representation. 

The case $\frac{m}{l}\in  (0,2 \left\lfloor \frac{2n+4}{3} \right\rfloor) \cap \BQ$ is similar, since $\frac{m}{l}= - \frac{\varphi_+(\theta)}{\phi_+(\theta)}$ for some $\theta \in (\theta_n, \beta_n)$. 
\end{proof}

\subsection{Proof of Theorem \ref{main}} Suppose  $\frac{m}{l} \in (-\infty, 2 \left\lfloor \frac{2n+4}{3} \right\rfloor) \cap \BQ$. 

If $\frac{m}{l} = 0$, then $X_{\frac{m}{l}}$ is a prime 3-manifold whose first Betti number is $1$, and by \cite{BRW} such manifold has left-orderable fundamental group.

If $\frac{m}{l} \not= 0$, then by Proposition \ref{prop} there exists a  representation $\rho: \pi_1(X_{\frac{m}{l}}) \to \mathrm{SL}_2(\BR)$ such that $\rho|_{\pi_1(\partial X)}$ is elliptic. Then $\rho$ lifts to a representation $\widetilde{\rho}: \pi_1(X_{\frac{m}{l}}) \to \widetilde{\mathrm{SL}_2(\BR)}$, where $\widetilde{\mathrm{SL}_2(\BR)}$ is the universal covering group of $\mathrm{SL}_2(\BR)$. See e.g. \cite[Section 3.5]{CD}. Note that $X_{\frac{m}{l}}$ is an irreducible 3-manifold (by \cite{Wu}) and $\widetilde{\mathrm{SL}_2(\BR)}$ is a left-orderable group (by \cite{Be}). Hence, by \cite{BRW}, the group $\pi_1(X_{\frac{m}{l}})$ is  left-orderable.

\no{
Suppose $n = 4$. Then $\theta_n \approx 2.2728$ so $y \in (2\cos(\frac{2\pi}{3} + \frac{\pi}{9}), 2\cos\theta_n) \approx (-1.53209, -1.2915)$.  At $\theta = \theta_n$ we have $y = -1.2915$ and 
$$
D^2-C^2 = - y^5 + y^4 + 5 y^3 - 6 y^2 - 5 y + 7.
$$
The real roots are $-1.96757, \, -1.22062$ and  $1.66787$. Hence  $D^2-C^2 < 0$. 

Assume that $D^2-C^2=0$ for some $\theta \in [\theta_n, \beta_n)$. Then 
$$
P(\theta) := S^2_{n-1}(y) + ( S_{n-1}(y) - S_{n-2}(y) ) S^3_{n-2}(y) = 0.
$$
Note that $P(y) = -1 + 2 y + 8 y^2 - 7 y^3 - 10 y^4 + 9 y^5 + 5 y^6 - 5 y^7 - y^8 + y^9$ has only one real root $y = -1.31202$ in the interval $(-1.53209, -1.2915)$, which corresponds to $c \approx 2.28632$. At this root, we have $z  = \frac{S_{n-2}(y)} {S_{n-1}(y)} \approx 1.97361$ and  $x=1+z-z^2 \approx -0.921516$. Then $C=xy+z-x^2z \approx 1.50669$ and $D=1+y-xz \approx 1.50669$. 
}

\section*{Acknowledgements} 
The author has been supported by a grant from the Simons Foundation (\#708778). He would like to thank the referee for helpful comments and suggestions.


\begin{thebibliography}{99999}

\bibitem[Be]{Be} G. Bergman, {\em Right orderable groups that are not locally indicable}, Pacific J. Math. \textbf{147} (1991), no. 2, 243--248.

\bibitem[BGW]{BGW} S. Boyer, C. Gordon and L. Watson, {\em On $L$-spaces and left-orderable fundamental groups}, Math. Ann. \textbf{356} (2013), no. 4, 1213--1245.

\bibitem[BM]{BM} K. Baker and A. Moore, {\em Montesinos
knots, Hopf plumbings, and L-space surgeries}, J. Math. Soc. Japan \textbf{70} (2018), no. 1, 95--110.

\bibitem[BRW]{BRW} S. Boyer, D. Rolfsen and B. Wiest, {\em Orderable $3$-manifold groups}, Ann. Inst. Fourier (Grenoble)\textbf{55} (2005), no. 1, 243--288.

\bibitem[CD]{CD}  M. Culler and N. Dunfield, {\em Orderability and Dehn filling}, Geom. Topol. \textbf{22} (2018), no. 3, 1405--1457.

\bibitem[Ch]{Ch} H. Chen, {\em Character varieties of even classical pretzel knots},  Studia Sci. Math. Hungar. \textbf{56} (2019), no. 4, 510--522.

\bibitem[Ga1]{Ga-holonomy} X. Gao, {\em Orderability of homology spheres obtained by Dehn filling}, Math. Res. Lett. \textbf{29} (2022), no. 5, 1387–1427.

\bibitem[Ga2]{Ga} X. Gao, {\em Slope of orderable Dehn filling of two-bridge knots}, J. Knot Theory Ramifications \textbf{31} (2022), no. 1, Paper No. 2250006, 24 pp.

\bibitem[HP]{HP}  M. Heusener and J. Porti, {\em Deformations of reducible representations of $3$-manifold groups into $\mathrm{PSL}_2(\BC)$}, Algebr. Geom. Topol. \textbf{5} (2005), 965--997.

% \bibitem[HTh]{HTh} A. Hatcher and W. Thurston, {\em Incompressible surfaces in 2-bridge knot complements}, Invent. Math. \textbf{79} (1985), no. 2, 225--246.

\bibitem[HT1]{HT-52} R. Hakamata and M. Teragaito, {\em Left-orderable fundamental group and Dehn surgery on the knot $5_2$}, Canad. Math. Bull. \textbf{57} (2014), no. 2, 310--317.

\bibitem[HT2]{HT-genus1} R. Hakamata and M. Teragaito, {\em Left-orderable fundamental groups and Dehn surgery on genus one $2$-bridge knots}, Algebr. Geom. Topol. \textbf{14} (2014), no. 4, 2125--2148.

\bibitem[Hu]{Hu-taut} Y. Hu, {\em Euler class of taut foliations and Dehn filling}, Comm. Anal. Geom. \textbf{31} (2023), no. 7, 1749--1782.

\bibitem[HZ]{HZ} C. Herald and X. Zhang, {\em A note on orderability and Dehn filling}, Proc. Amer. Math. Soc. \textbf{147} (2019), no. 7, 2815--2819.

\bibitem[KL]{KL}  K. Le, {\em Left orderability for surgeries on the $[1,1,2,2,2j]$ two-bridge knots}, 
J. Knot Theory Ramifications \textbf{31} (2022), no. 11, Paper No. 2250069, 22 pp.
 
\bibitem[KT]{KT}  A. Khan and A. Tran, {\em Classical pretzel knots and left-orderability},  Internat. J. Math. \textbf{32} (2021), no. 11, Paper No. 2150080, 16 pp.

\bibitem[KTT]{KTT} V. Khoi, M. Teragaito and A. Tran, {\em Left-orderable surgeries of double twist knots II}, Canad. Math. Bull. \textbf{64} (2021), no. 3, 624--637.

\bibitem[LM]{LM}  A. Lubotzky and A. Magid, {\em Varieties of representations of finitely generated groups},  Amer. Math. Soc. \textbf{58} (1985), no. 336, xi+117.

\bibitem[LT]{LT} T. Le and A. Tran, {\em On the AJ conjecture for knots}, Indiana Univ. Math. J. \textbf{64} (2015), no. 4, 1103--1151 (with an appendix written jointly with V. Huynh).

\bibitem[Ni]{Ni} Z. Nie, {\em Left-orderablity for surgeries on
$(-2,3,2n+1)$-pretzel knots}, Topology. Appl. \textbf{261} (2019), 1--6.

\bibitem[OS]{OS} P. Ozsv\'ath and Z. Szab\'o, {\em On knot Floer homology and lens space surgeries}, Topology \textbf{44} (2005), no. 6, 1281--1300.

\bibitem[Tr1]{Tr-ring} A. Tran, {\em The universal character ring of some families of one-relator groups}, Algebr. Geom. Topol. \textbf{13} (2013), no. 4, 2317--2333.

\bibitem[Tr2]{Tr} A. Tran, {\em On left-orderable fundamental groups and Dehn surgeries on knots}, J. Math. Soc. Japan \textbf{67} (2015), no. 1, 319--338.

\bibitem[Tr3]{Tr-twistedtorus} A. Tran, {\em Left-orderability for surgeries on twisted torus knots},  Proc. Japan Acad. Ser. A Math. Sci. \textbf{95} (2019), no. 1, 6--10.

\bibitem[Tr4]{Tr-I} A. Tran, {\em Left-orderable surgeries of double twist knots},  J. Math. Soc. Japan \textbf{73} (2021), no. 3, 753--765.

\bibitem[Va]{Va} K. Varvarezos, {\em Representations of the $(-2,3,7)$-pretzel knot and orderability of Dehn surgeries},  Topology Appl. \textbf{294} (2021), 107654, 15 pp.

\bibitem[Wa]{Wa} Y. Wang, {\em Detected Seifert surfaces and intervals of left-orderable surgeries}, preprint 2025, arXiv:2509.08127.

\bibitem[Wu]{Wu} Y.-Q. Wu, {\em Dehn surgery on arborescent knots}, J. Differential Geom. \textbf{43} (1996), no. 1, 171--197.

\bibitem[Zh]{Zh} B. Zhao, {\em Left-orderability in Dehn fillings of pseudo-Anosov mapping tori}, preprint 2026, arXiv:2604.04629.

\bibitem[Zu]{Zu} J. Zung, {\em Taut foliations, left-orders, and pseudo-Anosov mapping tori}, Geom. Topol. \textbf{28} (2024), no. 9, 4191--4232.

\end{thebibliography}
\end{document}